\documentclass[11pt]{amsart}
\usepackage{amsmath} 
\usepackage{amsthm}
\usepackage{bm}
\usepackage{amssymb} 
\usepackage{amsfonts}
\usepackage{graphicx}
\usepackage{color}
\usepackage[normalem]{ulem}
\usepackage{comment}
\usepackage{mathtools}
\usepackage{xcolor}
\usepackage{hyperref}
\usepackage{cleveref}
\usepackage{enumitem}
\usepackage{caption,subcaption}
\usepackage{pinlabel}
\overfullrule 5mm

\newtheorem{theorem}{Theorem}[section]
\newtheorem*{thmb4}{Theorem~\ref{thm:b4}}
\newtheorem*{unique}{Theorem~\ref{uniquenesscor}}
\newtheorem*{l0not1}{Lemma~\ref{L:0not1}}
\newtheorem*{thmbdrysmall}{Theorem~\ref{thm:boundarysmall}}
\newtheorem*{arbbig}{Corollary~\ref{cor:arbbig}}
\newtheorem*{murasugi}{Theorem~\ref{thm:murasugi}}
\newtheorem{lemma}[theorem]{Lemma}
\newtheorem{proposition}[theorem]{Proposition}
\newtheorem{question}[theorem]{Question}

\newtheorem{corollary}[theorem]{Corollary}
\newtheorem{claim}[theorem]{Claim}
\theoremstyle{definition}
\newtheorem{definition}[theorem]{Definition}
\newtheorem{remark}[theorem]{Remark}
\newtheorem{example}[theorem]{Example}
\newtheorem{conjecture}{Conjecture}
\newtheorem*{algorithm}{Algorithm}

\definecolor{ForestGreen}{rgb}{0, 0.686, 0}

\newcommand{\mblue}[1]{\marginpar{\textcolor{blue}{\tiny{#1}}}}
\newcommand{\blue}[1]{\textcolor{blue}{#1}}
\newcommand{\red}[1]{\textcolor{red}{#1}}

\newcommand{\Hom}{\text{Hom}}
\newcommand{\Z}{\mathbb{Z}}
\newcommand{\Q}{\mathbb{Q}}
\newcommand{\N}{\mathbb{N}}

\newcommand{\CP}{\mathbb{C}P}
\newcommand{\T}{\mathcal{T}}
\newcommand{\tri}{\mathcal{T}}
\newcommand{\M}{\mathcal{M}}
\let\O\relax
\newcommand{\O}{\mathcal{O}}
\let\int\relax
\newcommand{\int}{\mathring}

\newcommand{\boundary}{\partial}
\newcommand{\into}{\hookrightarrow}
\newcommand{\C}{\mathcal{C}}
\newcommand{\D}{\mathcal{D}}
\newcommand{\ob}{\mathcal{OB}}

\newcommand{\rL}{r\mathcal{L}}
\let\L\relax
\newcommand{\L}{\mathcal{L}}
\newcommand{\id}{\text{id}}



    \author[Castro]{Nickolas A. Castro}
    \address{University of Arkansas\\Fayetteville, AR 72704}
    \email{nacastro@uark.edu}
    
    \author[Islambouli]{Gabriel Islambouli}
    \address{University of Waterloo, Waterloo, ON, Canada  N2L 3G1}
    \email{gislambo@uwaterloo.ca}
    
    \author[Miller]{Maggie Miller}
    \address{Massachusetts Institute of Technology\\Cambridge, MA 02139, USA}
    \email{maggiehm@mit.edu}
    
    \author[Tomova]{Maggy Tomova}
    \address{University of Iowa\\Iowa City, IA 52240, USA}
    \email{maggy-tomova@uiowa.edu}
    
    \title[The relative $\L$-invariant of a compact $4$-manifold]{The relative $\L$-invariant of a compact $4$-manifold}
 \subjclass{57M99, 57R15 (primary); 57M15 (secondary)}
    
    \thanks{The first author was partially supported by the University of California,  Davis Chancellor's Postdoctoral Fellowship Program. The third author is supported by NSF Grant No. DGE-1656466. The forth author is partially supported by NSF Grant No. 1664583.}

\begin{document}  




\begin{abstract}
In this paper, we introduce the {\emph{relative $\L$-invariant}} $\rL(X)$ of a smooth, orientable, compact 4-manifold $X$ with boundary. This invariant is defined by measuring the lengths of certain paths in the cut complex of a trisection surface for $X$. This is motivated by the definition of the $\L$-invariant for smooth, orientable, closed 4-manifolds by Kirby and Thompson. We show that if $X$ is a rational homology ball, then $\rL(X)=0$ if and only if $X\cong B^4$. 

In order to better understand relative trisections, we also produce an algorithm to glue two relatively trisected 4-manifold by any Murasugi sum or plumbing in the boundary, and also prove that any two relative trisections of a given 4-manifold $X$ are related by interior stabilization, relative stabilization, and the relative double twist, which we introduce in this paper as a trisection version of one of Piergallini and Zuddas's moves on open book decompositions. Previously, it was only known (by Gay and Kirby) that relative trisections inducing equivalent open books on $X$ are related by interior stabilizations.
\end{abstract}



\maketitle

\section{Introduction}

In this paper, we introduce the {\emph{relative $\L$-invariant}}, a trisection-theoretic invariant of a compact $4$-manifold $X$ with boundary which we denote $\rL(X)$. This invariant is modeled after the $\L$-invariant $\L(Y)$ of Kirby and Thompson~\cite{linvariant} defined for a closed $4$-manifold $Y$. We review the details of the $\L$-invariant in Section~\ref{sec:closedL}.

This invariant has the following interesting property.

\begin{thmb4}
If $X$ is a rational homology ball with $\rL(X)=0$, then $X\cong B^4$.
\end{thmb4}

This mirrors the situation in the closed case, as Kirby and Thompson~\cite{linvariant} showed that for $X$ a rational homology sphere, $\L(X)=0$ if and only if $X\cong S^4$.

Roughly, $\rL$ measures the minimal complexity of a relative trisection diagram of $X$. By minimizing this complexity over all relative trisection diagrams of $X$, we obtain a manifold invariant. We also define two similar invariants $\rL^\boundary(X)$ and $\rL^\circ(X)$ which minimize (over relative trisection diagrams) complexities associated to the boundary and interior of $X$, respectively. Intuitively, one should think of $\rL^\partial$ and $\rL^\circ$ as being the relative $\L$-invariant restricted to the boundary or interior of a manifold, respectively. We make this precise in Section~\ref{sec2:relLdef}. We review the construction of relative trisections as introduced by Gay and Kirby~\cite{gaykirby} in Section~\ref{sec:reltridef}.

We show also that when $\rL^\boundary(X)$ is small, then the boundary of $X$ has simple topology.


\begin{l0not1}
If $\T$ is a $(g,k;p,b)$-relative trisection diagram of a 4-manifold $X$ with $\rL^\partial(T)\le 1$, then $\partial X\cong \#_{2p+b-1} S^1\times S^2$.
\end{l0not1}
\begin{thmbdrysmall}
Let $\T$ be a $(g,k;p,b)$-relative trisection of 4-manifold $X$ with $\rL^\boundary(\T)<2(2p+b-1)$. Then $\partial X$ admits an $S^1\times S^2$ summand.
\end{thmbdrysmall}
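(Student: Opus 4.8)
The plan is to read off, from a configuration realizing $\rL^\partial(\T)$, a factorization of the monodromy $\phi$ of the open book that $\T$ induces on $\partial X$, and then to show that when $\rL^\partial(\T)$ is below the stated threshold this factorization must leave some homological ``handle'' of the page $\Sigma_{p,b}$ untouched; an untouched handle will exhibit $\partial X$ as a connected sum with $S^1\times S^2$.

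First I would unwind the definition of $\rL^\partial$ from Section~\ref{sec2:relLdef}. Up to the normalization built into that definition, $\rL^\partial(\T)$ measures the minimal length of a cyclic sequence of elementary moves relating the arc portions of the three cut systems of $\T$, passing through suitably reduced intermediate systems near $\partial\Sigma$. Feeding such a sequence into the monodromy algorithm for relative trisections yields a word $\phi \simeq m_1 m_2 \cdots m_\ell$ in elementary mapping classes of the page $\Sigma_{p,b}$ --- arc slides and half-twists, each supported in an embedded subsurface carried by a single one of the $2p+b-1$ homological handles of $\Sigma_{p,b}$. Setting up the bookkeeping carefully, each handle $i$ is ``used'' $n_i$ times, with $\sum_i n_i$ equal to the (normalized) quantity $\rL^\partial(\T)$ and the index $i$ running over all $2p+b-1$ handles. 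The same data can be phrased purely diagrammatically in terms of the arc complex of the page, which may be cleaner.

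The heart of the matter is a counting step. I claim that a handle used at most once is ``inactive'': $\phi$ is then isotopic to a homeomorphism equal to the identity on a neighborhood of a properly embedded essential arc $a$ dual to that handle. Heuristically, a single elementary move across a handle is stabilization-like and can be cancelled at no cost to the resulting $3$-manifold, so only handles used at least twice --- once ``out'' and once ``back'' --- can affect the diffeomorphism type of $\partial X$; this is the relative-trisection counterpart of the mechanism underlying \Cref{L:0not1}, and it is precisely here that the factor of $2$ is forced. I expect proving this claim rigorously, rather than heuristically, to be the main obstacle. Granting it: since $\sum_i n_i = \rL^\partial(\T) < 2(2p+b-1)$ while there are $2p+b-1$ summands, some $n_i \le 1$, so some handle of the page is inactive.

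To finish, an inactive handle gives a properly embedded essential arc $a \subset \Sigma_{p,b}$ with $\phi$ supported in $\Sigma_{p,b}\setminus N(a)$. Cutting the page along $a$ realizes $(\Sigma_{p,b}, \phi)$ as a plumbing (a special case of Murasugi sum) of a lower-complexity open book $(\Sigma_{p',b'}, \phi')$ with the trivial annular open book, which is the open book of $S^1\times S^2$. By the plumbing--connected-sum correspondence for open books --- equivalently, by running the boundary-gluing algorithm of \Cref{thm:murasugi} in reverse --- it follows that $\partial X$ is diffeomorphic to a connected sum of the closed $3$-manifold of $(\Sigma_{p',b'},\phi')$ with $S^1\times S^2$, so $\partial X$ admits an $S^1\times S^2$ summand. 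As a consistency check, in the extreme case $\rL^\partial(\T)\le 1$ every $n_i$ must vanish except at most one, which is then still inactive, so the same analysis iterated reduces the page to a disk and recovers $\partial X \cong \#_{2p+b-1} S^1\times S^2$, in agreement with \Cref{L:0not1}.
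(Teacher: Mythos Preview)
Your pigeonhole skeleton is correct and is exactly what the paper does: a valid path $\delta$ realizing $\rL^\partial(\T)$ has fewer than $2(2p+b-1)$ type $0^\partial$ edges, and since each such edge replaces exactly one of the $2p+b-1$ arcs in the current cut system, some arc $a_0\in A_\alpha$ is replaced at most once along $\delta$. The endgame also matches: once you know $\phi$ fixes an essential arc in $\Sigma_\alpha$, the $S^1\times S^2$ summand follows.

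Where your proposal diverges from the paper, and where it has a genuine gap, is the middle step. Type $0^\partial$ edges are arc replacements in the \emph{trisection surface} $\Sigma$, not in the page $\Sigma_\alpha$, and they do not assemble into a factorization $\phi\simeq m_1\cdots m_\ell$ of the monodromy into elementary mapping classes of the page; the monodromy is only read off at the very end of the algorithm by comparing $A_\alpha$ with the final arcs (Section~\ref{sec:monodromy}). There are no half-twists, and a single type $0^\partial$ edge is not ``supported on one handle'' --- it replaces an arc by an arbitrary disjoint arc with the same endpoints. Consequently your ``once out, once back'' cancellation heuristic is not the operative mechanism, and without a replacement for it the proof is incomplete.

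The paper's actual argument for ``changed at most once $\Rightarrow$ fixed by $\phi$'' is a short case analysis on \emph{where} in $\delta$ the single change occurs. The type $0^\partial$ edge lies in exactly one of the segments $\Gamma_\alpha$, $\Gamma_\beta$, $\Gamma_\gamma$, and in each case the replacement arc differs from $a_0$ only by slides over the closed curves present in that segment (slides of $\alpha$, $\beta$, or $\gamma$, respectively). One then uses the standard form of the relevant pair of curve systems to check that the homology class of the slide curve lies in the span of the $\alpha$ curves in $H_1(\Sigma)$, so that after surgering $\Sigma$ along $\alpha$ the two arcs become isotopic in $\Sigma_\alpha$. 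This homological step --- not a cancellation phenomenon --- is what makes a single change invisible to the monodromy.
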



On the other end of the spectrum, we show that $\rL(X)$ can be large.
\begin{arbbig}
For any $n\in\mathbb{N}$, there exists a 4-manifold $X$ with $\rL(X)\ge n$.
\end{arbbig}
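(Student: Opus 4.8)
The plan is to deduce Corollary~\ref{cor:arbbig} from Theorem~\ref{thm:boundarysmall}. That theorem says that a relative trisection with small $\rL^\partial$ forces an $S^1\times S^2$ summand in the boundary, so to produce $4$-manifolds with large $\rL$ it suffices to exhibit, for each $n$, a compact $4$-manifold whose boundary has \emph{no} $S^1\times S^2$ connected summand yet is homologically forced to induce only open books with page $\Sigma_{p,b}$ of large first Betti number $2p+b-1$. The large-$\rL$ manifold is then obtained more or less for free from the contrapositive of Theorem~\ref{thm:boundarysmall}.

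Concretely, fix $n\ge 1$, let $\Sigma_n$ denote the closed orientable genus-$n$ surface, and set $X=\Sigma_n\times D^2$, so that $\partial X = Y := \Sigma_n\times S^1$. I would first record two facts about $Y$: (i) $Y$ is aspherical, hence $\pi_2(Y)=0$, hence $Y$ is irreducible and in particular admits no $S^1\times S^2$ connected summand; and (ii) for \emph{any} open book $(F,\phi)$ on a closed oriented $3$-manifold $M$ with $F=\Sigma_{p,b}$ one has $b_1(M)\le 2p+b$. Fact (ii) follows from the Wang exact sequence for the mapping torus of $\phi\colon\Sigma_{p,b}\to\Sigma_{p,b}$ (the complement of an open neighborhood of the binding), whose first Betti number is at most $b_1(\Sigma_{p,b})+1 = 2p+b$, together with the observation that Dehn filling the $b$ boundary tori to recover $M$ cannot increase $b_1$. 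Since $b_1(Y)=2n+1$, fact (ii) forces every $(g,k;p,b)$-relative trisection of $X$ to induce an open book on $\partial X$ with $2p+b\ge 2n+1$, i.e. $2p+b-1\ge 2n$.

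Now I would apply the contrapositive of Theorem~\ref{thm:boundarysmall}: since $\partial X=Y$ has no $S^1\times S^2$ summand, every $(g,k;p,b)$-relative trisection $\T$ of $X$ satisfies $\rL^\partial(\T)\ge 2(2p+b-1)\ge 4n$. Minimizing over all relative trisection diagrams of $X$ gives $\rL^\partial(X)\ge 4n$, and then the inequality $\rL(X)\ge\rL^\partial(X)$ (immediate from the construction of the invariants in Section~\ref{sec2:relLdef}, as the boundary complexity is part of the complexity defining $\rL$) yields $\rL(X)\ge 4n\ge n$, as desired. The argument is essentially formal once Theorem~\ref{thm:boundarysmall} is available; the one point that needs care is that the lower bound $2p+b-1\ge 2n$ must hold \emph{uniformly over all} relative trisections of $X$ and not merely for some convenient one, which is exactly what the elementary Betti-number estimate in (ii) delivers. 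Any closed oriented $3$-manifold with no $S^1\times S^2$ summand and arbitrarily large first Betti number (for instance $\#_n T^3$, or a surface bundle over $S^1$ with large-genus fiber) would serve equally well as the boundary, together with any $4$-manifold it bounds.
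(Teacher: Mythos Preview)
Your proof is correct and follows essentially the same strategy as the paper's: apply Theorem~\ref{thm:boundarysmall} in contrapositive to a $4$-manifold whose boundary has no $S^1\times S^2$ summand and whose first homology forces $2p+b-1$ to be large for every induced open book, then use $\rL\ge\rL^\partial$. The only difference is the example: the paper takes $X_n=\natural_n\bigl(B^4\cup(\text{$2$-framed $2$-handle})\bigr)$ with $\partial X_n\cong\#_n L(2,1)$, so the constraint on $2p+b-1$ comes from the minimal number of generators of $H_1\cong(\Z/2)^n$, whereas you take $\Sigma_n\times D^2$ and obtain the constraint from a Betti-number bound via the Wang sequence. Your choice makes irreducibility of the boundary immediate from asphericity, while the paper's examples have rational-homology-sphere boundaries; otherwise the two arguments are interchangeable.
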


We also introduce a new move on relative trisection diagrams called a \emph{relative double twist}. This move achieves a Harer twist on the open book induced by the described trisection. As we already know how to achieve Hopf stabilization of the open book and interior stabilization of the described trisection, this allows us to relate any two relative trisections of a fixed $4$-manifold $X$ by a combination of relative double twists, interior stabilizations, and relative stabilizations. This strengthens the uniqueness results of Gay--Kirby~\cite{gaykirby} and the first author~\cite{nickthesis} by removing the requirement that $\partial W$ be a rational homology sphere.

\begin{unique}
Any two relative trisections $\T_1$ and $\T_2$ of a $4$-manifold with connected boundary can be made isotopic after a finite number of interior stabilizations, relative stabilizations, relative double twists, and the inverses of these moves applied to each of $\T_1$ and $\T_2$.
\end{unique}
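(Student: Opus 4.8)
The plan is to decouple the problem into a statement about open books on $\partial X$ and the Gay--Kirby rigidity result for trisections inducing a fixed open book. Write $\ob_i$ for the open book decomposition of $\partial X$ induced by $\T_i$ (via the monodromy algorithm); since $\partial X$ is connected, each $\ob_i$ is a connected open book on the closed oriented $3$--manifold $\partial X$. The first step is to invoke Harer's theorem, in the general form coming from Piergallini--Zuddas's moves on open books, that any two open book decompositions of a fixed closed oriented $3$--manifold are related by a finite sequence of (positive and negative) Hopf plumbings, their inverses, and Harer twists; apply this to produce such a sequence from $\ob_1$ to $\ob_2$.

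The second step is to promote this sequence of open-book moves to a sequence of moves on relative trisection diagrams of $X$. Here I would use that (i) a relative stabilization of a relative trisection of $X$ produces another relative trisection of $X$ whose induced open book differs by a Hopf plumbing, (ii) the inverse of a relative stabilization realizes a Hopf deplumbing, and (iii) a relative double twist produces another relative trisection of $X$ whose induced open book differs by a Harer twist --- each of which is exactly what is established when these moves are introduced. Performing the moves dictated by the Harer sequence on $\T_1$ therefore yields a relative trisection $\T_1'$ of $X$ with $\ob(\T_1')$ isotopic to $\ob_2=\ob(\T_2)$.

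The third and final step applies Gay--Kirby: two relative trisections of $X$ that induce isotopic open books on $\partial X$ become isotopic after a finite number of interior stabilizations of each. Applying this to $\T_1'$ and $\T_2$ and concatenating with the moves of the second step produces the required sequence of interior stabilizations, relative stabilizations, relative double twists, and their inverses relating $\T_1$ and $\T_2$ --- applied, as in the statement, to each of the two trisections (all open-book-changing moves to $\T_1$, interior stabilizations to both).

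I expect the main obstacle to be the realization in the second step: an abstract Hopf plumbing or Harer twist is performed along a specific arc in a specific page of $\ob_1$, whereas a relative stabilization or a relative double twist of a diagram naturally performs the operation only in a standard position, so one must run a change-of-coordinates argument --- any two properly embedded arcs in a page, with the appropriate framing, are carried to one another by a diffeomorphism realized within the trisection's mapping class data --- to arrange that the diagram move induces exactly the prescribed open-book move. A related delicate point is invoking Harer's theorem with the correct generality when the binding of $\ob_i$ is disconnected ($b>1$); if one prefers to reduce to the connected-binding case first, a preliminary sequence of relative stabilizations (equivalently, Murasugi sums in the boundary, using the gluing algorithm of this paper) can be used to connect the binding components. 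The remaining bookkeeping --- checking that each diagram move genuinely fixes the underlying $4$--manifold, and that the inverses of the listed moves are themselves among the permitted moves --- is routine.
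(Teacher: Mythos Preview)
Your proposal is correct and follows essentially the same route as the paper: use Piergallini--Zuddas (what you call Harer's theorem) to connect the induced open books by $\partial S_\pm$ and $\partial U$ moves, realize these via relative stabilizations and relative double twists, then finish with Gay--Kirby's interior-stabilization uniqueness. The ``obstacle'' you anticipate in step two is exactly what the paper handles beforehand: Definition~\ref{def:relstab} builds relative stabilization as a Murasugi sum along an \emph{arbitrary} arc in the page, and Proposition~\ref{prop:dobdryu} shows that a relative double twist can realize \emph{any} prescribed $\partial U$ move---so no separate change-of-coordinates argument or reduction to connected binding is needed.
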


We describe the various stabilization moves in detail in Section~\ref{sec:stab}. In Section~\ref{sec2:relLdef}, we show that while interior stabilization of a relative trisection $\T$ cannot increase $\rL(\T)$, both relative stabilization and relative double twist can increase $\rL(\T)$.

Moreover, we explicitly describe how to perform the Murasugi sum operation via relative trisection diagrams.

\begin{murasugi}
There is an explicit algorithm to glue two trisections together by Murasugi sum. That is, given relatively trisected 4-manifolds $X,X'$, we may produce a relative trisection of $X\natural X'$ where the induced open book on $\boundary (X\natural X')$ may be any Murasugi sum of the open books on $\boundary X,\boundary X'$.
\end{murasugi}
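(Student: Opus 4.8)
The plan is to carry out the construction entirely at the level of relative trisection diagrams. Recall from the work of Gay--Kirby and the first author that a relative trisection of $X$ is encoded by a diagram $(\Sigma,\alpha,\beta,\gamma)$ on a compact surface $\Sigma$ with boundary, with the cut systems $\alpha,\beta,\gamma$ put into a fixed normal form in a collar of $\boundary\Sigma$, and that the monodromy algorithm recovers from such a diagram both $X$ and the open book $(P,\phi)$ on $\boundary X$ --- in particular it produces the page $P\cong\Sigma_{p,b}$ explicitly as a surface assembled from $\Sigma$ together with the collar data. The first step is to record a local model for the region of the diagram near $\boundary\Sigma$ that carries this boundary data, and to pin down, inside the page $P$ that the algorithm outputs, where an arbitrary properly embedded $2n$-gon $R$ (with $n$ alternating sides on the binding $\boundary P$) sits relative to the curves of $\alpha$, $\beta$, and $\gamma$.

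Now suppose we are given relatively trisected $X,X'$ together with the data of a Murasugi sum: a $2n$-gon $R$ in the page $P$ of $\boundary X$ and a matching $2n$-gon $R'$ in the page $P'$ of $\boundary X'$, with the binding sides of $R$ matched to the interior sides of $R'$ and vice versa. After isotoping the two diagrams so that $R$ and $R'$ lie in the respective local models, form the glued surface $\Sigma'' = \Sigma\cup_{R\equiv R'}\Sigma'$ by identifying $R$ with $R'$; this plumbing of surfaces along a $2n$-gon is again a compact surface with boundary, and the open book page it determines will be the Murasugi sum $P\cup_R P'$. Take $\alpha'' = \alpha\cup\alpha'$, $\beta'' = \beta\cup\beta'$, $\gamma'' = \gamma\cup\gamma'$, and adjoin to each cut system $n-1$ parallel curves running once through the plumbing region, chosen so that the three pairwise sub-diagrams regain the defining properties of a relative trisection diagram; the resulting $(g'',k'';p'',b'')$-diagram is our candidate trisection of $X\natural X'$.

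Three verifications remain. First, that $(\Sigma'',\alpha'',\beta'',\gamma'')$ is a genuine relative trisection diagram: each of the three pairs must be handleslide-equivalent to the standard diagram of a connected sum of copies of $S^1\times S^2$ --- the restrictions to the two halves are so equivalent by hypothesis, and the connecting curves are arranged to extend the slide sequence across the plumbing region --- while the Euler characteristic and boundary-component counts are forced by those of $\Sigma,\Sigma'$ and the $2n$-gon. Second, that the $4$-manifold described is $X\natural X'$: away from the plumbing region the $4$-dimensional handle decompositions are exactly those of $X$ and $X'$, and the plumbing region contributes only a boundary connected sum of the three $4$-dimensional $1$-handlebody pieces, yielding $X\natural X'$. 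Third, and this is the crux, that the open book recovered from $(\Sigma'',\ldots)$ has monodromy the Murasugi-sum monodromy $\phi\circ\phi'$ (each factor extended by the identity over the other page): since the monodromy algorithm is local to the collar of the boundary, on the image of $\Sigma$ it reproduces $\phi$ supported off $R$ and on the image of $\Sigma'$ it reproduces $\phi'$ supported off $R'$, the added connecting curves are engineered to contribute trivially, and the two factors therefore compose exactly as in the definition of the Murasugi sum.

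The main difficulty is the interplay of the last two verifications with the choice of connecting curves: one must exhibit a single local model of the plumbing region whose added curves simultaneously restore all three pairwise diagram conditions, leave the described $4$-manifold a boundary connected sum, and contribute nothing to the monodromy --- and then trace the monodromy algorithm through this model to confirm that the composition $\phi\circ\phi'$ appears with no correction terms and that every Murasugi sum, for every $n$ and every isotopy class of embedded $2n$-gon in the two pages, is attained. We expect most of the work to be this explicit local computation near the binding, after which the locality of the diagram-to-open-book dictionary promotes it to the stated algorithm.
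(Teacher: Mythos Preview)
Your proposal contains a genuine gap rooted in a misconception about where the Murasugi-sum polygon lives and how the monodromy algorithm works.

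You assert that the monodromy algorithm is ``local to the collar of the boundary'' and that the plumbing $2n$-gon $R$ can be confined to a local model near $\boundary\Sigma$. This is false. The polygon $R$ is an arbitrary properly embedded $2n$-gon in the \emph{page} $\Sigma_\alpha$ (the surface obtained by compressing $\Sigma$ along the $\alpha$ curves), and while $R$ is automatically disjoint from the $\alpha$ curves, it will in general cross the $\beta$ and $\gamma$ curves many times when pulled back to $\Sigma$. Consequently, after plumbing $\Sigma'$ onto $\Sigma$ along $R=R'$, the naive union $\beta\cup\beta'$ need not even be a disjoint collection of curves (the $\beta$ curves may pass through the overlap region $R$ and meet the $\beta'$ curves there), and there is no reason the monodromy of the resulting diagram should be $\phi\circ\phi'$. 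Your proposed fix of adjoining $n-1$ extra curves in each system does not address this; indeed, the paper's construction adds no extra curves at all, so this also reflects a miscount of the required number of compression curves on the plumbed surface.

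The paper's actual mechanism is quite different. It chooses arcs $a_1,\dots,a_{n-1}$ cutting $R$ into strips, runs the monodromy algorithm to track these arcs to arcs $b_i$ disjoint from $\beta$ and then to arcs $c_i$ disjoint from $\gamma$, and uses homeomorphisms $f,g$ of the plumbed surface (identity on $\Sigma'\setminus R'$) sending $a_i\mapsto b_i$ and $b_i\mapsto c_i$. The glued curve systems are then $\overline\alpha=\alpha\cup\alpha'$, $\overline\beta=\beta\cup f(\beta')$, and $\overline\gamma=\gamma\cup (g\circ f)(\gamma')$. In words: one first slides the $\beta$ and $\gamma$ curves off the plumbing region via the monodromy algorithm, and only then plumbs on the primed curves. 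This is exactly the step your proposal is missing, and it is what guarantees both that the pairs are standard and that the induced monodromy is the Murasugi-sum composition.
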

This has not previously appeared in the literature on trisections. Recall that Murasugi sum is a generalized version of plumbing; we review this definition briefly at the beginning of Section~\ref{sec:murasugi}.




\subsection*{Organization}
We break the paper into the following sections.
\begin{enumerate}
    \item[Section~\ref{sec1:preliminaries}:] We recall basic definitions regarding trisections and the $\L$-invariant for closed 4-manifolds. In this section, we describe stabilizing operations for relative trisections and prove Theorem~\ref{uniquenesscor}. 
    \item[Section~\ref{sec2:relLdef}:] We define the relative $\L$-invariant, investigate its basic properties, and compare it to the $\L$-invariant. We also prove Theorem~\ref{thm:murasugi} as an item of independent interest, using the language introduced in this section.
    \item[Section~\ref{sec3:smallL}:] We study the topology of 4-manifolds with small $\L$-invariant. In particular, we prove Theorems~\ref{thm:boundarysmall} and~\ref{thm:b4}. 
    As a corollary of Theorem~\ref{thm:boundarysmall}, we conclude Corollary~\ref{cor:arbbig}. 
    \item[Section~\ref{sec4:arccomplex}:] Given a relative trisection $\T$ of $X$, we relate $\rL^\boundary(\T)$ to the displacement distance of the monodromy of the open book induced by $\T$ on $\boundary X$. We use this comparison to construct relative trisections with large $\L$ invariant but whose 3-manifold boundary has small homology.
\end{enumerate}

\subsection*{Acknowledgements}
This project began at the Spring Trisector’s Meeting at the University of Georgia in February, 2018. Thanks to Jeff Meier and Juanita Pinz\'{o}n-Caicedo for early interesting discussions, and to Rom\'an Aranda and Jesse Moeller for very helpful discussions after our first draft. Thanks also to an anonymous referee for providing many useful comments.

\section{Preliminaries}\label{sec1:preliminaries}

\subsection{Trisections}
A {\emph{trisection}} is a decomposition of a smooth, closed, orientable 4-manifold into three standard pieces. These decompositions were introduced by Gay and Kirby~\cite{gaykirby} as a 4-dimensional analogue of Heegaard splittings of 3-manifolds. Though this paper focuses on the case of manifolds with boundary, we begin with the definition for a closed 4-manifold as a warm up.

\begin{definition}
For non-negative integer $g$ and a triple of non-negative integers $k=(k_1,k_2,k_3)$ with $g\ge k_i$, a $(g,k)$-\emph{trisection} $\tri$ of a smooth, closed, orientable $4$-manifold $X$ is a decomposition of $X$ into three pieces $X_1$, $X_2$, $X_3$ so that:
\begin{itemize}
	\item[i)] $X=X_1\cup X_2\cup X_3$ and $X_i\cap X_j=\boundary X_i\cap\boundary X_j$ for $i\neq j$,
	\item[ii)] Each $X_i\cong \natural_k S^1\times B^3,$
	\item[iii)] Each $X_i\cap X_j$ ($i\neq j$) is a $3$-dimensional handlebody $\natural_g S^1\times D^2$,
	\item[iv)] The triple intersection $X_1\cap X_2\cap X_3$ is a genus $g$ surface $\Sigma$ that is properly embedded in $X$.	
\end{itemize}

We may write $\tri=(X_1,X_2,X_3).$ We may also write $(X,\tri)$ as a pair to indicate that $X$ is a $4$-manifold with associated trisection $\tri$ of $X$.
\end{definition}

Gay and Kirby~\cite{gaykirby} proved that every smooth, closed, orientable 4-manifold admits a trisection, which is unique up to a stabilization move. One nice feature of a trisection is that all of the information of the 4-manifold can be encoded by curves in the triple intersection surface. We will sketch an argument for this fact, which relies on the following theorem of Laudenbach--Poenaru~\cite{laudenbach}.

\begin{theorem}[Laudenbach--Poenaru~\cite{laudenbach}]\label{laudenbach}
Let $M\cong\natural_k S^1\times B^3$. Every self-diffeomorphism of $\boundary M$ extends to a self-diffeomorphism of $M$.
\end{theorem}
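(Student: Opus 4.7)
The plan is to exploit a handle decomposition of $M \cong \natural_k S^1 \times B^3$ with one 0-handle and $k$ 1-handles, and extend a given self-diffeomorphism $\phi$ of $\boundary M \cong \#_k S^1 \times S^2$ across these handles one at a time. Let $D_1,\dots,D_k \subset M$ be the cocore 3-balls of the 1-handles, so that the boundary spheres $S_i = \boundary D_i$ form a collection of $k$ disjoint, non-parallel, essential 2-spheres in $\boundary M$ whose complement is a $2k$-punctured 3-sphere.

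Next I would analyze the image spheres $\phi(S_1),\dots,\phi(S_k)$. The crucial input is the following uniqueness statement, which is the topological heart of the argument: any collection of $k$ disjoint, pairwise non-parallel, essential 2-spheres in $\#_k S^1 \times S^2$ bounds a disjoint collection of 3-balls in $\natural_k S^1 \times B^3$, and moreover any two such collections are isotopic in $\boundary M$. I would prove this by putting the spheres in normal form with respect to a standard reducing system and running a Haken-style innermost disk/sphere argument, repeatedly using that $\pi_2(\#_k S^1 \times S^2)$ is the free $\mathbb{Z}[\pi_1]$-module generated by the standard sphere system. This is the step I expect to be the main obstacle, and it is essentially what Laudenbach and Po\'enaru established.

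Given the uniqueness, there is an ambient isotopy of $\boundary M$ taking $\phi(S_i)$ back to $S_i$ for each $i$, so after composing $\phi$ with this isotopy I may assume $\phi(S_i) = S_i$ setwise. At this point I would extend $\phi$ over each cocore $D_i$: the restriction $\phi|_{S_i}$ is a self-diffeomorphism of $S^2$, and by Smale's theorem $\mathrm{Diff}(S^2) \simeq O(3)$, so $\phi|_{S_i}$ is isotopic to an orthogonal map and extends canonically over $D_i \cong B^3$. Performing this extension across all $k$ cocores yields a diffeomorphism of the full 3-sphere boundary of the 0-handle $B^4$ that agrees with $\phi$ on $\boundary M \setminus \bigcup D_i$.

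Finally I would extend across the 0-handle. The remaining task is to show that the resulting self-diffeomorphism of $S^3 = \boundary B^4$ extends over $B^4$, which is exactly Cerf's theorem $\pi_0 \mathrm{Diff}(S^3) = 0$ (equivalently, $\Gamma_4 = 0$). Assembling these three extensions and undoing the initial boundary isotopy produces the desired self-diffeomorphism of $M$ restricting to $\phi$. The sphere uniqueness in step two is where all the real work lies; the subsequent extensions are standard consequences of Smale's and Cerf's theorems.
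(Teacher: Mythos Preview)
The paper does not prove this theorem; it is quoted from Laudenbach--Po\'enaru and used as a black box, so there is no argument in the paper to compare yours against.

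That said, your outline contains a substantive error. The uniqueness claim in your second step is false for $k\ge 2$: there are many non-isotopic systems of $k$ disjoint, pairwise non-parallel essential $2$-spheres in $\#_k S^1\times S^2$ whose complement is a punctured $S^3$. Concretely, a handle-slide diffeomorphism of $\#_2 S^1\times S^2$ carries the standard reducing pair to a pair not isotopic to it; more conceptually, such sphere systems are the top-dimensional simplices of Hatcher's sphere complex, which is positive-dimensional. If your uniqueness held, your remaining steps would show that every self-diffeomorphism of $\#_k S^1\times S^2$ is isotopic to one preserving the standard system setwise, and hence (after your Smale and Cerf steps) that the mapping class group is generated by permutations and reflections of the cocores --- which it is not.

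The actual Laudenbach--Po\'enaru argument does not attempt to isotope $\phi(S_i)$ back to $S_i$. One route is to compute generators for $\pi_0\mathrm{Diff}(\#_k S^1\times S^2)$ (permutations, reflections, handle-slides, and sphere twists, using Laudenbach's earlier $3$-manifold work) and then observe that each generator visibly extends over $\natural_k S^1\times B^3$. The other route is closer in spirit to yours: show that each $\phi(S_i)$ bounds a properly embedded $3$-ball in $M$, so that the image system furnishes a \emph{new} $1$-handle structure on $M$, and extend $\phi$ over that structure rather than the original one. Your first bounding claim is the right hard step; it is the isotopy-uniqueness add-on that fails.
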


As a consequence of Theorem~\ref{laudenbach}, a trisection $\tri=(X_1,X_2,X_3)$ is determined by its {\emph{spine}} $(X_1\cap X_2, X_2\cap X_3, X_1\cap X_3)$. This holds because given the inclusion of $\boundary X_i=(X_i\cap X_j)\cup(X_i\cap X_k)$ into $X$, there is a unique way to glue in the rest of $X_i$ up to diffeomorphism. The 3-dimensional handlebody $X_i\cap X_j$ is in turn determined by $g$ curves on the surface $X_1\cap X_2\cap X_3$, and so a trisection is completely determined up to diffeomorphism by a triple of curves on a surface. This leads us to the following definition.

\begin{definition}

A {\emph{trisection diagram}} $\D=(\Sigma; \alpha, \beta, \gamma)$ consists of
\begin{itemize}
	\item[i)] $\Sigma$, a closed genus--$g$ surface,
	\item[ii)] $\alpha, \beta,$ and $\gamma$, each of which are non-separating collections of $g$ disjoint simple closed curves on $\Sigma$.
\end{itemize}
Moreover, we require that each triple $(\Sigma; \alpha, \beta), (\Sigma; \beta, \gamma), (\Sigma; \alpha, \gamma)$ be a Heegaard diagram for $\#_{k_i}(S^1\times S^2)$ for some non-negative integer $k_i$ ($i=1,2,3$, respectively).
\end{definition}

Given a trisection diagram $\D=(\Sigma; \alpha, \beta, \gamma)$, we may recover a trisected 4-manifold $(X,\tri)$ by the following process:
\begin{enumerate}[label=(\arabic*)]
    \item Start with $\Sigma \times D^2$,
    \item Attach a copy of $H\times I$ to $\Sigma\times[e^{2\pi i-\epsilon}, e^{2\pi i+\epsilon}]$. Each $\alpha$ curve in $\Sigma\times\{e^{2\pi i}\}$ should bound a disk into $H\times e^{2\pi i}$. 
    \item Similarly, attach a copy of $H\times I$ near $\Sigma\times \{e^{4\pi i}\}$ and another near $\Sigma\times \{1\}$, with the handlebodies determined by the $\beta$ and $\gamma$ curves respectively.
    \item Glue in a (uniquely determined) 4-dimensional $1$--handlebody to each of the three resulting boundary components.

\end{enumerate}
    The trisected 4-manifold $(X,\tri)$ is well-defined up to diffeomorphism. We say that $\D$ is a {\emph{trisection diagram}} of $(X,\tri)$, or just a diagram of $\tri$.

So far, this discussion has been about closed 4-manifolds. However, in this paper we are more interested in 4-manifolds with nonempty boundary.

\subsection{Relative trisections}\label{sec:reltridef}

A relative trisection is a generalization of a trisection to the case of a 4-manifold with boundary. Again we decompose a given 4-manifold into three standard pieces, though in this case the pieces meet in a slightly more intricate manner. We give the precise definition below.

\begin{definition}
For integers $g,p,b$ and a triple $k=(k_1,k_2,k_3)$ with $g\ge p\ge 0$, $b\ge 1$, and $2p+b-1\le k_i\le 2g+b-1$, a $(g,k;p,b)$-\emph{relative trisection} $\tri$ of a compact, orientable $4$-manifold $X$ with connected, nonempty boundary is a decomposition of $X$ into three pieces $X_1$, $X_2$, $X_3$ so that:
\begin{itemize}
	\item[i)] $X=X_1\cup X_2\cup X_3$ and $X_i\cap X_j=\boundary X_i\cap\boundary X_j$ for $i\neq j$,
	\item[ii)] The triple intersection $X_1\cap X_2\cap X_3$ is a genus $g$ surface $\Sigma$ with $b$ boundary components, properly embedded in $X$,	\item[iii)] Each $X_i\cong \natural_{k_i} S^1\times B^3,$
	\item[iv)] Each $X_i\cap X_j$ ($i\neq j$) is a $3$-dimensional compression body from $\Sigma$ to a genus--$p$ surface contained in $\boundary X$,
	\item[v)] There are agreeing product structures \[X_i\cap\boundary X\cong [(X_i\cap X_{i-1})\cap\boundary X]\times I\cong -[(X_i\cap X_{i+1})\cap\boundary X]\times I.\] 
\end{itemize}

We may write $\tri=(X_1,X_2,X_3)$.

The product structures on each $X_i\cap\boundary X$ induce an open book structure on $\boundary X$ with binding $\boundary \Sigma$ in which $X_i\cap X_j\cap\boundary X$ is a single page. We will write $\mathcal{O}_\tri$ to denote the open book induced on $\boundary X$ by $\tri$.

If we are abstractly given a relative trisection $\tri$, we may write $X_\tri$ to denote the trisected $4$-manifold decomposed by $\tri$. We may also write $(X,\tri)$ as a pair to indicate that $X$ is a $4$-manifold with associated relative trisection $\tri$ of $X$.
\end{definition}

The above definition can be extended to a $4$-manifold with more than one boundary component, but here we specify connected boundary for simplicity. Relative trisections were first introduced by Gay and Kirby~\cite{gaykirby} and shown to exist for all bounded, compact $4$-manifolds, even when specifying the boundary data of the induced open book.

\begin{theorem}\cite{gaykirby}
Given any open book decomposition $\O$ of $\partial X$ there is a relative trisection $\tri$ of $X$ which induces the open book $\O$.
\end{theorem}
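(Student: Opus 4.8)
The approach, following Gay and Kirby, is to build the relative trisection from a Morse $2$-function $G\colon X\to B^2$, i.e. a generic smooth map whose singular locus is a union of fold curves and cusps and whose generic fibre is a surface. First I would fix the boundary data. The open book $\mathcal O$, with binding $B$ and page $P$ of genus $p$ with $b$ boundary circles, determines (with the usual conventions) a model map $\partial X\to B^2$ under which the center of $B^2$ pulls back to $B$, the radial arc at angle $\theta$ pulls back to the page $P_\theta$, and the monodromy is recorded by how the $P_\theta$ rotate as $\theta$ traverses $\partial B^2$. Since $B^2$ is contractible, this model extends to a map $X\to B^2$; a generic perturbation rel $\partial X$ makes it a Morse $2$-function $G$ that agrees with the model on a fixed collar $\partial X\times I$.

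Next I would straighten $G$ in the interior. Divide $B^2$ into three closed sectors $S_1,S_2,S_3$ meeting along three spokes at the center. Using the standard repertoire of homotopies of Morse $2$-functions --- births and deaths of fold circles, fold crossings, flips, and wrinkling/unwrinkling of cusps --- performed entirely in the interior so that the collar model is untouched, I would arrange that over each sector $S_i$ the singular locus consists only of indefinite folds in the standard ``trisected'' pattern, that the folds meet each spoke in a controlled number of points, and that the central fibre $\Sigma:=G^{-1}(0)$ is a connected genus-$g$ surface with $b$ boundary circles. This is the interior straightening procedure of the closed case with the added bookkeeping that all moves avoid $\partial X$; making $\Sigma$ connected and of uniform genus may force extra cancelling fold pairs, which is why $g$ is not determined by $\mathcal O$ (only $g\ge p$).

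Finally I would read off the trisection. Set $X_i=G^{-1}(S_i)$, $X_i\cap X_j=G^{-1}(\mathrm{spoke}_{ij})$, and $X_1\cap X_2\cap X_3=\Sigma$. Since $G$ has only indefinite folds in standard position over each sector, the same relative Morse-theory argument as in the closed case identifies $X_i$ with $\natural_{k_i}S^1\times B^3$; reading the folds along a spoke as surface compressions identifies $X_i\cap X_j$ with a $3$-dimensional compression body from $\Sigma$ to a genus-$p$ surface with $b$ boundary circles sitting in $\partial X$, namely a page of $\mathcal O$. The product structures of axiom (v), and the fact that the open book induced on $\partial X$ is exactly $\mathcal O$, are immediate because $G$ was never altered on $\partial X\times I$, where by construction it is the radial model of $\mathcal O$; the inequalities $2p+b-1\le k_i\le 2g+b-1$ fall out of the same count of folds.

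The hardest step should be the relative straightening: one must verify that every move used to normalize the fold and cusp locus of a $B^2$-valued Morse function has a version supported in $\mathring{X}$, so that the chosen collar model survives, and that after all moves the fold pattern over each sector is genuinely standard (rather than standard only up to handle slides one cannot realize). Controlling the connectivity and genus of the intermediate fibres --- hence how many stabilizations are forced and what the final values of $g$ and $k$ are --- is the other delicate piece of bookkeeping.
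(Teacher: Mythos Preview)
The paper does not actually prove this theorem: it is stated with a citation to Gay and Kirby~\cite{gaykirby} and no argument is given. So there is no ``paper's own proof'' to compare against. That said, your sketch is a faithful outline of the original Gay--Kirby argument: they construct relative trisections exactly by building a Morse $2$-function $G\colon X\to B^2$ that restricts on a collar of $\partial X$ to the radial model of the prescribed open book, then homotope $G$ rel boundary (via births/deaths of folds, flips, cusp merges, etc.) until the fold locus is in trisected position over three sectors, and finally read off $X_i=G^{-1}(S_i)$. Your identification of the hardest step --- showing all the straightening moves can be supported in the interior so the collar model survives --- is accurate.

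One small caution: your first paragraph asserts that the model map on $\partial X$ extends to $X\to B^2$ ``since $B^2$ is contractible.'' Contractibility of the target guarantees an extension as a continuous map, but you want more structure (e.g., that the extension can be made smooth and generic with the right behavior near $\partial X$). In practice Gay and Kirby build the extension more carefully, starting from a handle decomposition of $X$ relative to the given open book on $\partial X$ and constructing $G$ handle by handle, rather than invoking obstruction theory. This is a technical point rather than a genuine gap, but it is where the actual work begins.
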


The first author~\cite{nickthesis} showed that we may glue trisected 4-manifolds along common boundary as long as long as the relative trisections induce equivalent open books.

\begin{theorem}\cite{nickthesis}
Let $(W, \tri)$ and $(W', \tri')$ be relatively trisected $4$-manifolds such that $\partial W \cong \partial W'$. The relative trisections can be glued together along their diffeomorphic boundaries to induce a closed, trisected $4$-manifold $(W\underset{\partial}{\cup} W', \tri\cup \tri')$ if the induced open books $\O_\tri$ and $\O_{\tri'}$ are compatible (see Section~\ref{gluingsection}).
\end{theorem}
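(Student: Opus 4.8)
The plan is to construct the closed trisection of $M:=W\cup_\boundary W'$ by hand and check the four trisection axioms directly, using the compatibility hypothesis to line the two pieces up along the boundary and Theorem~\ref{laudenbach} to control the $4$-dimensional $1$-handlebodies.

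\emph{Setting up the gluing map.} The first step is to make precise what ``compatible'' buys us: the prescribed diffeomorphism $\boundary W\cong\boundary W'$ can be isotoped to an orientation-reversing diffeomorphism $\phi\colon\boundary W\to\boundary W'$ carrying the open book $\O_\tri$ to $\O_{\tri'}$ --- binding to binding and pages to pages --- and, crucially, carrying the decomposition of $\boundary W$ into the three sectors $X_i\cap\boundary W$ onto the sectors $X_i'\cap\boundary W'$ while intertwining the product structures of axiom~(v). I would extract this formulation from Section~\ref{gluingsection}; it is exactly the flexibility the argument needs. Fix such a $\phi$, let $M=W\cup_\phi W'$, and set $Z_i:=X_i\cup_\phi X_i'$ for $i=1,2,3$.

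\emph{Identifying the pieces.} Because $\phi$ respects sectors, $Z_i = X_i\cup_{\mathcal H_i}X_i'$, where $\mathcal H_i:=X_i\cap\boundary W\cong P\times I$ is the thickened page, a genus-$(2p+b-1)$ handlebody sitting in $\boundary X_i\cong\#_{k_i}S^1\times S^2$ and in $\boundary X_i'$. One must show $Z_i\cong\natural_{k_i''}S^1\times B^3$ for a suitable $k_i''$; this is where the $1$-handlebody structure gets verified. For the spine: since $\phi$ matches the sub-pages, one computes $Z_i\cap Z_j=(X_i\cap X_j)\cup_{P_{ij}}(X_i'\cap X_j')$, the union of the two relative compression bodies along a common genus-$p$ page $P_{ij}$; modeling each piece as $\Sigma_{p,b}\times I$ with $1$-handles attached to one end, the union is $\Sigma_{p,b}\times[-1,1]$ with $1$-handles on both ends, hence a handlebody of genus $g+g'+b-1$. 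The triple intersection is $\Sigma\cup_{\boundary\Sigma}\Sigma'$, the two trisection surfaces glued along the $b$ binding circles, a closed genus-$(g+g'+b-1)$ surface. Checking that $Z_i\cap Z_j=\boundary Z_i\cap\boundary Z_j$, that the $Z_i$ cover $M$, and that the handlebodies $Z_i\cap Z_j$ meet correctly in $\Sigma\cup_{\boundary}\Sigma'$ is then routine, so $(M;Z_1,Z_2,Z_3)$ is a trisection, which by construction is $\tri\cup\tri'$.

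\emph{The main obstacle.} The crux is the lemma that $\natural_{k}S^1\times B^3$ glued to $\natural_{k'}S^1\times B^3$ along the thickened page in the boundary is again a $4$-dimensional $1$-handlebody --- this is false for a generic handlebody in the boundary, so the proof must use that the page sits in the standard position dictated by the model relative-trisection sector. I would establish it either by a direct handle-slide and cancellation argument inside that model, or by first arranging that $Z_i$ is built from handles of index at most $3$, then observing that $\boundary Z_i$ inherits the genus-$(g+g'+b-1)$ Heegaard splitting $(Z_i\cap Z_{i-1})\cup_{\Sigma\cup\Sigma'}(Z_i\cap Z_{i+1})$, which identifies $\boundary Z_i\cong\#_{k_i''}S^1\times S^2$, and finally invoking Theorem~\ref{laudenbach} to cancel the remaining handles. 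A secondary technical point, which the compatibility hypothesis is precisely designed to dispatch, is ensuring that the two product structures of axiom~(v) agree after gluing, so that the sectors --- and hence the $Z_i$ --- are well defined; it is exactly the failure of compatibility that breaks this, and then no gluing to a trisection exists.
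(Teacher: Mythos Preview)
The paper does not prove this theorem at all; it is stated with a citation to \cite{nickthesis} (and later, in the diagrammatic form of Theorem~\ref{thm:GluedDiagramCurves}, to \cite{CO,nickthesis}) and used as a black box. So there is no in-paper proof to compare against.

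That said, your outline is a faithful reconstruction of the argument one finds in those references. You have correctly isolated the three things that must be checked --- that the glued surfaces $\Sigma\cup_{\partial}\Sigma'$ is the new closed trisection surface, that the glued compression bodies become genuine genus-$(g+g'+b-1)$ handlebodies, and that each $Z_i=X_i\cup X_i'$ is a $4$-dimensional $1$-handlebody --- and you have correctly flagged the third as the nontrivial step. Your proposed strategy for it (either working in the explicit model of the relative sector, or reading the Heegaard splitting of $\partial Z_i$ off the handlebodies $Z_i\cap Z_{i\pm1}$ and then invoking Laudenbach--Po\'enaru) is exactly how the cited sources handle it. The genus counts you give match, once one uses that compatible open books force $p=p'$ and $b=b'$.

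One small imprecision worth tightening if you ever write this out in full: the gluing of $X_i\cap X_j$ to $X_i'\cap X_j'$ is along the page $P_{ij}$ only, and the vertical boundaries $\partial\Sigma\times I$ of the two compression bodies concatenate into a collar that gets absorbed into the closed surface $\Sigma\cup_{\partial}\Sigma'$; your description ``$\Sigma_{p,b}\times[-1,1]$ with $1$-handles on both ends'' is correct but it is worth saying explicitly why the resulting $3$-manifold has closed boundary equal to the new trisection surface.
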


The first author, together with Gay and Pinz{\'o}n-Caicedo~\cite{cgp}, showed that relative trisections can be completely described diagramatically.

\begin{definition}

A {\emph{relative trisection diagram}} $\D=(\Sigma; \alpha, \beta, \gamma)$ consists of
\begin{itemize}
	\item[i)] $\Sigma$, a genus $g$ surface with $b$ boundary components,
	\item[ii)] $\alpha, \beta,$ and $\gamma$, each of which are a non-separating collection of $g-p$ disjoint simple closed curves on $\Sigma$.
\end{itemize}
Moreover, each triple $(\Sigma; \alpha, \beta), (\Sigma; \beta, \gamma), (\Sigma; \gamma, \alpha)$ can be made standard as in Figure~\ref{F:standard} after handleslides of the curves and diffeomorphisms of $\Sigma$.
\end{definition}

\begin{figure}
{\centering
    \labellist
		\pinlabel \rotatebox{-90}{\resizebox{8pt}{.8in}{$\}$}} at 90 5
		\pinlabel {$k-2p-b+1$} at 90 -10
		\pinlabel \rotatebox{-90}{\resizebox{8pt}{.8in}{$\}$}} at 257 5
		\pinlabel {$g+p+b-k-1$} at 257 -10
		\pinlabel \rotatebox{-90}{\resizebox{8pt}{.8in}{$\}$}} at 423 5
		\pinlabel {$p$} at 423 -10
		\pinlabel \resizebox{8pt}{1in}{$\}$} at 585 80
		\pinlabel $b$ at 600 80
	\endlabellist
	\includegraphics[scale=.6]{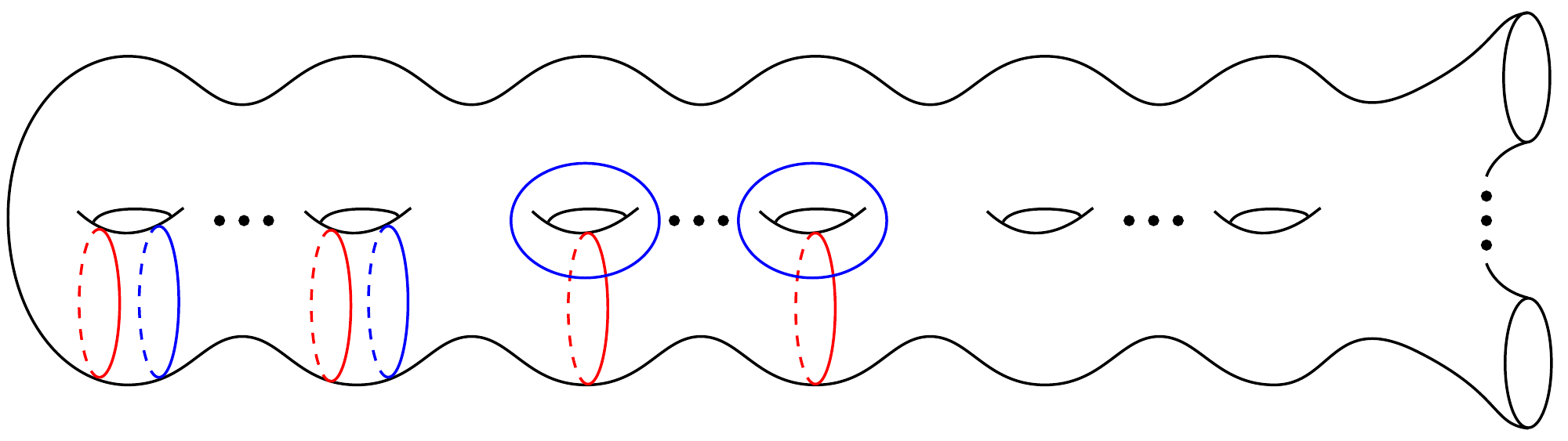}
	\caption{Standard position for a pair of collections of curves in a $(g,k;p,b)$-relative trisection diagram}
	\label{F:standard}
}
\end{figure}

We say that a relative trisection diagram $\D$ describes or determines the relative trisection $\tri=(X_1,X_2,X_3)$ if $\tri$ has the property that under some identification $X_1\cap X_2\cap X_3$ with $\Sigma$ (hence the naming convention), then
\begin{itemize}
    \item [i)] $X_1\cap X_2$ strongly deformation retracts to $\Sigma\cup(3$-dimensional $2$--handles along the $(g-p)$ $\alpha$ curves),
    \item[ii)] $X_2\cap X_3$ strongly deformation retracts to $\Sigma\cup(3$-dimensional $2$--handles along the $(g-p)$ $\beta$ curves),
    \item[iii)] $X_3\cap X_1$ strongly deformation retracts to $\Sigma\cup(3$-dimensional $2$--handles along the $(g-p)$ $\gamma$ curves).
\end{itemize}

\begin{theorem}\cite{cgp}
Every relative trisection $\tri$ of a $4$-manifold $X^4$ can be described by a relative trisection diagram.
\end{theorem}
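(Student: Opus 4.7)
The plan is to extract three cut systems of curves on $\Sigma := X_1 \cap X_2 \cap X_3$ directly from the compression-body structure of the spine of $\tri$, and then to verify that each resulting pair can be placed in the standard position of Figure~\ref{F:standard}. Throughout, write $H_{ij} := X_i \cap X_j$ for the 3-dimensional compression body separating the two 4-dimensional sectors $X_i$ and $X_j$.

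First I would produce the curves. By item (iv) of the definition of a relative trisection, each $H_{ij}$ is a compression body from $\Sigma$ (genus $g$, $b$ boundary circles) to a genus-$p$ page of $\O_\tri$. Any such compression body is obtained from $\Sigma \times I$ by attaching $g-p$ three-dimensional 2-handles along a cut system of $g-p$ disjoint, essential, non-separating simple closed curves in $\Sigma \times \{0\}$; choose such cut systems $\alpha, \beta, \gamma$ for $H_{12}, H_{23}, H_{31}$ respectively. By construction, $(\Sigma; \alpha, \beta, \gamma)$ tautologically satisfies conditions (i)--(iii) of the notion of "describing $\tri$" stated just above the theorem.

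Next I would verify the standard-position condition for each pair, focusing on $(\alpha, \beta)$ since the other two are symmetric. The spine union $H_{12} \cup_\Sigma H_{23}$, glued to the product collar $X_2 \cap \boundary X$ guaranteed by condition (v), recovers all of $\boundary X_2 \cong \#_{k_2}(S^1 \times S^2)$. Capping off the $b$ boundary circles of $\Sigma$ by meridional disks in a neighborhood of the binding of $\O_\tri$ converts this data into an honest genus-$g$ Heegaard splitting of $\#_{k_2}(S^1 \times S^2)$, to which Waldhausen's uniqueness theorem applies: after handleslides and a diffeomorphism of the capped surface, the splitting matches the standard one. Because the capping was supported away from $\alpha$ and $\beta$, undoing it recovers $(\alpha, \beta)$ in the form of Figure~\ref{F:standard}, with curve counts $(k_2 - 2p - b + 1,\ g + p + b - k_2 - 1,\ p)$ forced by the standardness of the Heegaard splitting together with the inequality $2p + b - 1 \le k_2 \le 2g + b - 1$.

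The main obstacle, as in the closed case, is the last step: one must argue that the Waldhausen diffeomorphism can be chosen to respect the boundary structure of $\Sigma$ inherited from $\O_\tri$, so that the standardization descends to the diagram $(\Sigma; \alpha, \beta)$ rather than merely to the closed 3-manifold obtained by capping. This requires a careful relative isotopy argument near $\boundary \Sigma$, combined with an application of Theorem~\ref{laudenbach} to $X_2 \cong \natural_{k_2} S^1 \times B^3$, which allows one to extend the resulting boundary diffeomorphism across the 4-dimensional sector $X_2$ without disturbing the surrounding pieces $X_1$ and $X_3$ of $\tri$.
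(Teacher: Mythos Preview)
The paper does not contain a proof of this theorem: it is quoted from \cite{cgp} and no argument is given here, so there is nothing in the present paper to compare against. Your outline---extract defining curve systems from the three compression bodies and then verify pairwise standardness---is indeed the shape of the argument in \cite{cgp}, but the key step you propose does not work as written.

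The problem is the sentence ``Capping off the $b$ boundary circles of $\Sigma$ by meridional disks \ldots\ converts this data into an honest genus-$g$ Heegaard splitting of $\#_{k_2}(S^1\times S^2)$.'' Inside $\partial X_2$ the two pieces abutting $\Sigma$ are the compression bodies $H_{12}$ and $H_{23}$, each with $\partial_-$ a genus-$p$ surface with $b$ boundary components; capping $\partial\Sigma$ by disks does not turn these into genus-$g$ handlebodies, so no genus-$g$ Heegaard splitting arises and Waldhausen cannot be invoked in the form you state. What does work is to take the closed surface $\Sigma' = \Sigma \cup (P\times\{\tfrac12\})$ obtained by capping $\Sigma$ with a parallel copy of the page inside the product region $X_2\cap\partial X\cong P\times I$; then each side $H_{12}\cup(P\times[0,\tfrac12])$ and $H_{23}\cup(P\times[\tfrac12,1])$ is a genuine handlebody of genus $g+p+b-1$, and $\Sigma'$ is a Heegaard surface for $\#_{k_2}(S^1\times S^2)$ to which Waldhausen applies. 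Even after this correction, one must still argue that the standardizing diffeomorphism can be taken to respect the decomposition $\Sigma'=\Sigma\cup P$ (not merely $\partial\Sigma$), which is the substantive content of the lemma in \cite{cgp}; your final paragraph identifies a difficulty near $\partial\Sigma$, but the real obstacle is controlling the entire page $P$ inside $\Sigma'$, and the appeal to Theorem~\ref{laudenbach} is not what resolves it.
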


Note in particular that a relative trisection diagram then determines an open book on $\boundary X$. Given the relative trisection diagram, this open book is determined up to automorphisms of $\boundary X$ that extend over $X$. When $(X,\T)$ is specified, then a diagram $\D$ of $\T$ determines this open book up to isotopy.

Since a relative trisection diagram $\D$ determines a relative trisection $\T$, we may write $\O_\D$ to mean the open book induced by $\T$ on $X_\D:=X_\T$. We similarly say that $\D$ induces the (abstract) open book $\O_\D$. Again, when $(X,\T)$ is specified first, then this open book is determined up to isotopy in $\boundary X$, otherwise $\O_D$ is determined up to an automorphism of $\boundary X$ extending over $X$.

\subsection{Trisection stabilization}\label{sec:stab}
Interior stabilization of trisections was introduced in~\cite{gaykirby}. This is completely analogous to stabilization of Heegaard splittings.

\begin{definition}\label{stabdef}
Let $\T=(X_1,X_2,X_3)$ be a trisection or relative trisection of a 4-manifold $X$. Let $\T_1=(Y_1,Y_2,Y_3)$ be a $(1,k)$-trisection of $S^4$, where $\{k_1,k_2,k_3\}=\{0,0,1\}$ (as in Figure~\ref{F:S4genus1}). We obtain another trisection $\T'=(Z_1,Z_2,Z_3)$ of $X$ by taking the connected-sum $(X,\T'):=(X,\T)\#(S^4,\T_1)$. Here, the ball removed from each of $X, S^4$ when performing the connected-sum is centered at a point in the triple-intersection surface of $\T$, $\T_1$. We arrange $\T$ and $\T_1$ so that $X_i$ and $Y_i$ meet in a ball along the connected-sum 3-sphere. Then we let $Z_i=X_i\natural Y_i$.

We say $\T'$ is obtained by {\emph{stabilizing}} $\T$. Conversely, we say that $\T$ is obtained by {\emph{destabilizing}} $\T'$. When $X$ has boundary, we will usually refer to (de)stabilization as {\emph{interior}} (de)stabilization.
\end{definition}

Note that there are three kinds of stabilization we may perform on a trisection $\T$. One of the these stabilizations increases $k_1$ while fixing $k_2$ and $k_3$; the other stabilizations increase $k_2$ or $k_3$ (see Figure~\ref{F:S4genus1}). When $(X,\T)$ is a relatively trisected 4-manifold with boundary, interior stabilization does not affect the induced open book on $\boundary X$. Gay and Kirby~\cite{gaykirby} show that any two relative trisections $(X,\T_1),(X,\T_2)$ which induce the same open book on the bounding $3$-manifold $\boundary X$ become isotopic after finitely many interior stabilizations of each of $\T_1, \T_2$.

When $(X,\T)$ is a relatively trisected 4-manifold, we consider two different forms of stabilizations that take place near $\boundary X$. First we briefly discuss Lefschetz fibrations over the disk, through which both stabilizations must pass. The reader is referred to~\cite{nickthesis,cgp, CO, OZ} for details. 

\begin{definition}
A \emph{Lefschetz fibration} of a $4$--manifold with boundary $W$ is a smooth map $f:W \rightarrow D^2$ with a finite number of isolated singularities $C_f=\{c_1, \ldots, c_n\}$ such that each of the critical points can be locally modeled by the map $(z,w)\mapsto z^2+w^2.$
\end{definition}
For any $y\in D^2\setminus C_f$, $f^{-1}(y)\cong F$, where $F$ is a surface with boundary called the \emph{regular fiber of $f$}. If $c_i\in C_f$ is a critical value of $f$, then we refer to $f^{-1}(q)$ as a \emph{critical fiber}. 
It is well known that each $c_i\in C_f$ corresponds to a simple, closed curve $\delta_i \subset F$ called a \emph{vanishing cycle}. The critical fiber corresponding to $c_i$ is obtained by contracting the vanishing cycle $\delta_i$ to a point, resulting in a nodal singularity. The topology of a Lefschetz fibration can be recovered by the data of the regular fiber and an ordering of vanishing cycles as follows: We attach $4$--dimensional $2$--handles to $F\times D^2$ along neighborhoods of the vanishing cycles in sequential order. Each $2$--handle is attached to $F\times\{x_i\}$ for distinct values of $x_i\in partial D^2$.  
The framing of a $2$--handle is $-1$ if the local model of the corresponding critical point is orientation preserving, and is $+1$ if the local model is orientation reversing. A Lefschetz fibration with both orientation preserving and reversing local models is called \emph{achiral}.

One key feature shared by Lefschetz fibrations and relative trisections of $W$ is that they induce an open book decomposition on $\partial W.$ While obtaining the explicit (abstract) open book from a relative trisection diagram is quite involved (see Section~\ref{sec:monodromy}), obtaining the open book decomposition from a Lefschetz fibration is quite simple. Given an ordering of the vanishing cycles $\delta_1, \ldots, \delta_n$ on $F$, let $\tau(\delta_i)$ denote the positive Dehn twist of $F$ along the curve $\delta_i$. The open book decomposition of $\partial W$ is $\mathcal{O}_f=(F, \phi),$ where $\phi\in \emph{Map}(F, \partial F)$ is the composition of Dehn twists $\tau^{-\sigma_n}(\delta_n)\cdots\tau^{-\sigma_1}(\delta_1)$, where $$\sigma_i=\begin{cases}-1&\text{the local model of $c_i$ is orientation-preserving}\\\hphantom{-}1&\text{otherwise}.\end{cases}$$ 

By carefully adding a canceling $1$-$2$ pair to $W$, we can obtain a new Lefschetz fibration $f'$ of $W$. We require that the feet of the $1$--handle are attached to neighborhoods of points on the binding of $\mathcal{O}$. The attaching sphere $\delta$ of the $2$--handle is comprised of two arcs $\delta =a\cup a'$, where $a$ is a properly embedded arc in a single page $\mathcal{O}$ and $a'$ is the core of the $1$--handle. Finally, $\delta$ must have framing $\pm1$. This modification is referred to as an {\emph{$S$ move}} in~\cite{pz}. This ensures that the bounding open book is modified by a positive/negative Hopf stabilization, $\partial S$. The effect on the regular fiber is depicted in Figure~\ref{F:LefStab}. Note that the regular fibers of $f'$ differ from those of $f$ by an additional $1$--handle (i.e. a band), and $C_{f'}=C_f\cup c$, where $c$ is a Lefschetz singularity with vanishing cycle $\delta$. In the sequential ordering of vanishing cycles for $f'$, $\delta$ appears last.

\begin{figure}{\centering
\subcaptionbox{The regular fiber of $f$.}{
\includegraphics[scale=.5]{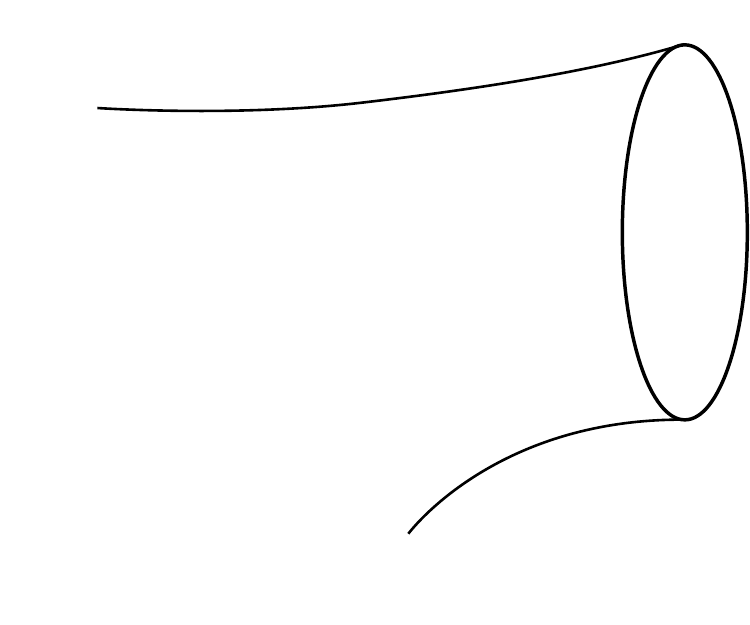}
}
\hspace{1cm}
\subcaptionbox{The regular fiber of $f'$.
}{
\includegraphics[scale=.5]{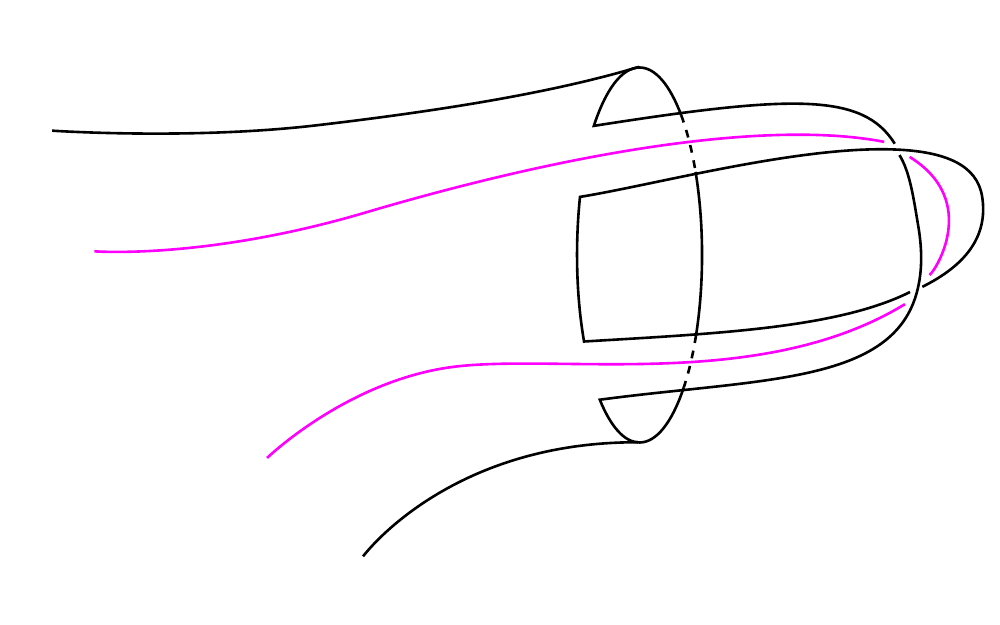}
}
\subcaptionbox{The local result of a relative stabilization.}{\includegraphics[scale=.75]{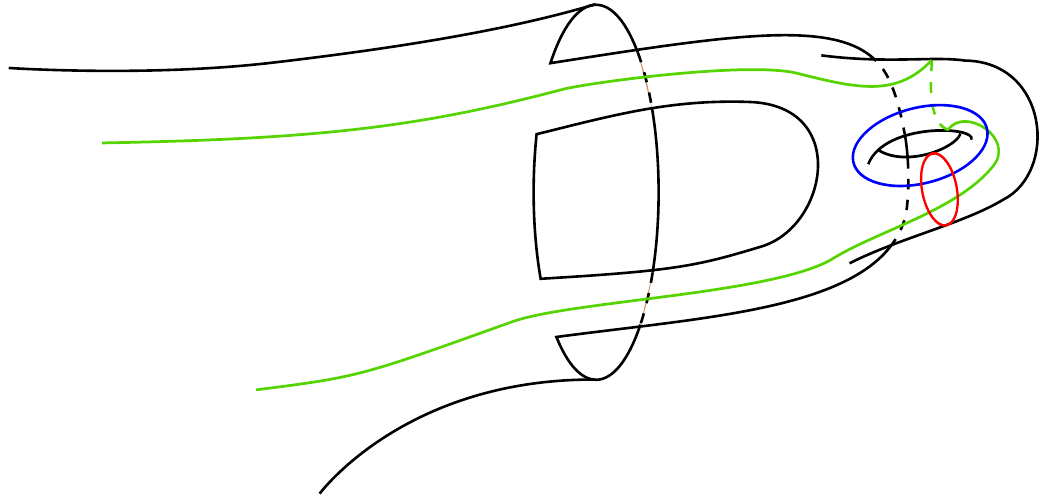}
}
\caption{Modifying a Lefschetz fibration $\mathcal{F}$ by adding a cancelling $1$-$2$--pair. In~\cite{pz}, this is referred to as an $S$ move. This induces a Hopf stabilization to the bounding open book decomposition. In general, one could attach the Hopf band to different boundary components. The new regular fiber is obtained from the old fiber by adding a band. The new (last) vanishing cycle runs over a core of this band.} 
\label{F:LefStab}}
\end{figure}
In a neighborhood of a Lefschetz singularity, there is a local perturbation $(z,w)\mapsto z^2+w^2+t\mathrm{Re}(w)$, known as \emph{wrinkling}, which changes the nodal singularity to a triply cusped singular set. Roughly speaking, in the case of Lefschetz fibrations over the disk, wrinkling all of the singularities will result in a relative trisection (diagram) which induces the same open book decomposition on $\partial W$ as the initial Lefschetz singularity. Adding a cancelling $1$-$2$ pair as above (\emph{to a relative trisection}) and wrinkling the corresponding singularity gives rise to a \emph{relative stabilization}, which we shortly define in Definition~\ref{def:relstab}. This procedure on Lefschetz fibrations motivates the general definition of a relative stabilization. The local model for obtaining a relative trisection diagram via wrinkling is shown in Figure~\ref{F:wrinkle}. The total move on relative trisection diagrams is sketched in Figure~\ref{F:LefStab}. 

\begin{figure}{\centering
\subcaptionbox{}{
\includegraphics[scale=.45]{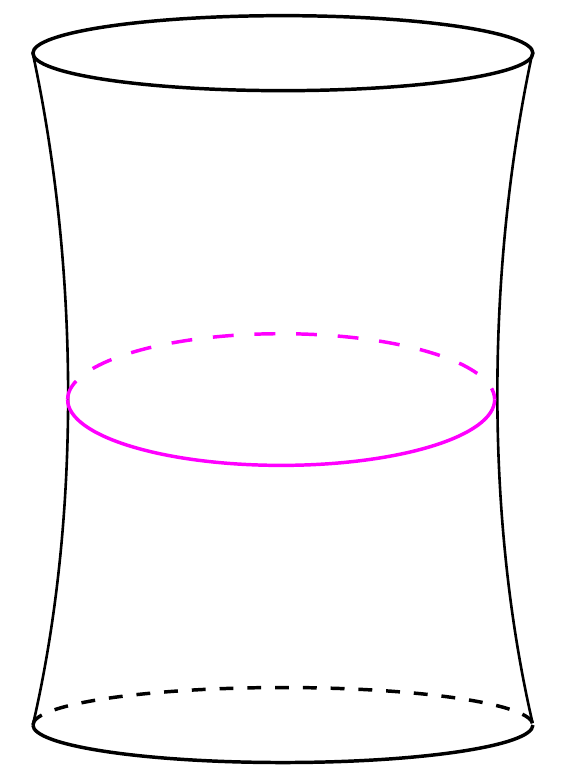}
}
\hspace{1cm}
\subcaptionbox{}{
\includegraphics[scale=.45]{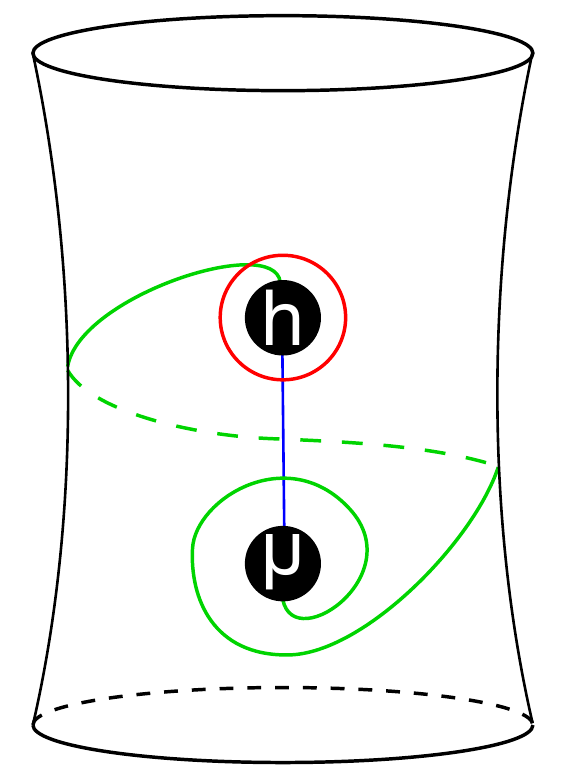}
}
\caption{The effect of wrinkling a vanishing cycle.}
\label{F:wrinkle}
}
\end{figure}


\begin{definition}\label{def:relstab}
Let $\T=(X_1,X_2,X_3)$ be a relative trisection of a 4-manifold $X$. Let $\T_1=(Y_1,Y_2,Y_3)$ be a $(1,1;0,2)$-trisection of $B^4$. We obtain a new relative trisection $\T'=(Z_1,Z_2,Z_3)$ of $X$ by taking the boundary connected-sum $(X,\T'):=(X,\T)\natural(B^4,\T_1)$. To form this boundary connected-sum, we Murasugi-sum the open books $\O_\T$ and $\O_{\T_1}$ along squares, making sure to glue the plumbed square in $X_i\cap X_j\cap\partial X$ to one in $Y_i\cap Y_j\cap\partial B^4$. Then we let $Z_i=X_i\natural Y_i$.

We say $\T'$ is obtained by {\emph{relatively stabilizing}} $\T$. Conversely, we say that $\T$ is obtained by {\emph{relatively destabilizing}} $\T'$. We illustrate a relative stabilization from this perspective in Figure~\ref{F:relstab}. In practice, we typically specify the Murasugi-sum square in the page $\Sigma_{\alpha}=X_1\cap X_2\cap\partial X$ in order to perform the stabilization operation diagramatically, so this is what we have done in Figure~\ref{F:relstab}.
\end{definition}

\begin{figure}{\centering
\subcaptionbox{An arc $\delta$ in $\Sigma_{\alpha}$. The arc $\delta$ may intersect many $\beta$ and $\gamma$ curves, as indicated.}{
\includegraphics[width=2in]{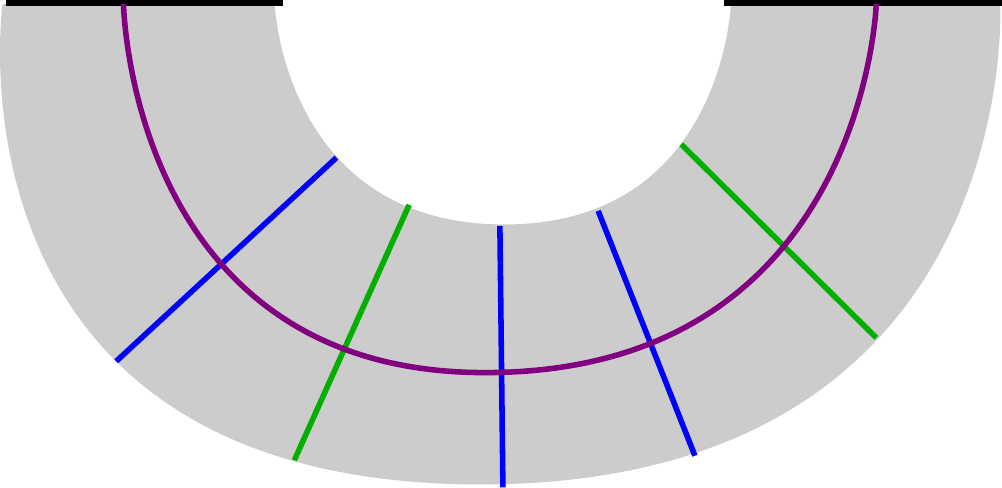}
}
\hspace{1cm}
\subcaptionbox{The effect of positive relative stabilization along $\delta$.}{
\includegraphics[width=2in]{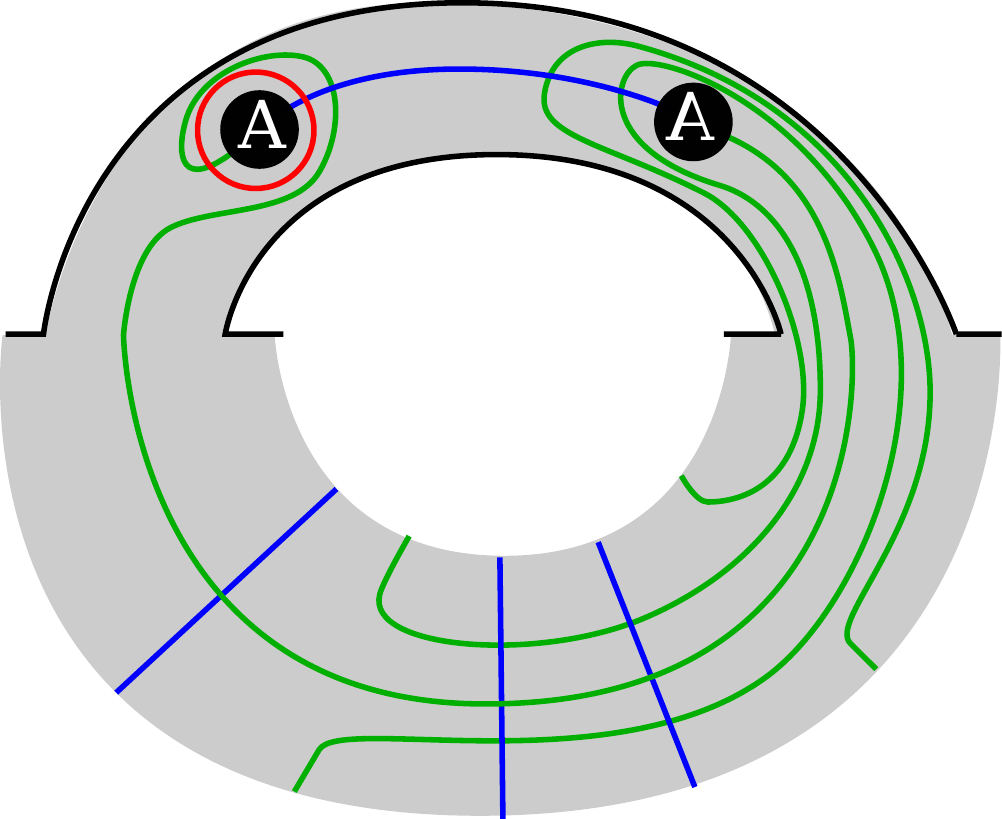}
}
\hspace{1cm}
\subcaptionbox{The effect of negative relative stabilization along $\delta$.}{
\includegraphics[width=2in]{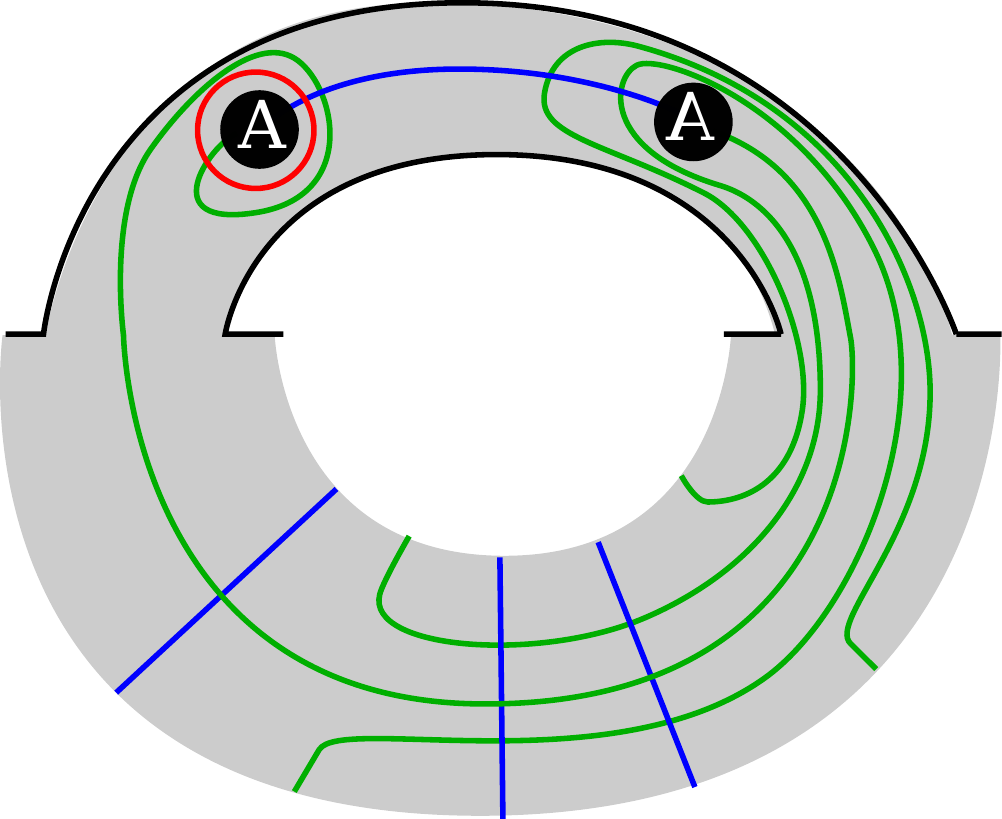}
}
\caption{Local illustration of positive and negative relative stabilization. Compare to Figure~\ref{F:LefStab}.}\label{F:relstab}
}

\end{figure}
We will see a more detailed construction of plumbings of trisections in Theorem~\ref{thm:murasugi}.

When a $3$-manifold $Y$ has a unique $\textrm{spin}^c$ structure, any two open books on $Y$ can be related by Hopf stabilizations/destabilizations~\cite{giroux}. Since $\partial H$ does not change the $\textrm{spin}^c$ structure of the homotopy plane field supported by an open book, not all open book decompositions of a fixed $3$-manifold can be related by Hopf stabilizations and destabilizations.


Piergallini and Zuddas~\cite{pz} show that the {\emph{$U$}} move on Lefschetz fibrations 
can achieve the goal of changing the $spin^c$ structure associated to the induced open book of $Y$. We are particular interested in the effect of the $U$ move on the boundary open book.

\begin{definition}
Let $\O$ be an open book on a 3-manifold defined by a page $P$ and monodromy $\phi:P\to P$. Isotope $\phi$ so that it fixes an open disk $D$ pointwise. Let $P'$ be a copy of $P$ with two smaller disjoint open disks within $D$ deleted. Let $\phi':P\to P$ be an automorphism which agrees with $\phi$ outside $D$, and inside $D$ consists of Dehn twists about each component of $\boundary P'\setminus\boundary P$ of opposite sign. We say that the open book $\O'$ with page $P$ and monodromy $\phi$ is obtained from $\O$ by a {\emph{$\partial U$ move}}. 
\end{definition}

It is an easy exercise to show that the total spaces of $\O$ and $\O'$ are homeomorphic 3-manifolds. Just as there are many ways to perform positive or negative Hopf stabilization to an open book (i.e. many choices of square in the page along which to plumb a Hopf band), there are many ways to choose a disk $D$ in a page $P$ along which to perform a $\partial U$ move. In practice, one should isotopy the monodromy $\phi$ of an open book $\O$ to be a product of Dehn twists along curves $C_1,\ldots, C_n$ in $P$. Then one can consider performing the $\partial U$ move in any component of $P\setminus(C_1\cup\cdots\cup C_n)$. Piergallini and Zuddas~\cite{pz} show that some sequence of these $\partial U$ moves (and inverses) and Hopf stabilizations can turn $\O$ into any open book of the same 3-manifold.

\begin{theorem}[{\cite[Theorem 3.5]{pz}}]\label{pzthm}
Any two open book decompositions of a closed, oriented 3-manifold 
$M^3$ can be made ambiently isotopic after a finite number of $\partial S_{\pm}$ moves (positive and negative Hopf stabilization) and $\partial U$ moves and their inverses.
\end{theorem}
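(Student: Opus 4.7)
The plan is to translate the problem into 4 dimensions using achiral Lefschetz fibrations, apply a stabilization theorem there, and then read off the effect on the boundary open book.

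First, I would use the fact (due to Akbulut--Ozbagci, building on Harer) that every open book $\mathcal{O} = (P, \phi)$ of $M^3$ is induced as the boundary open book of some achiral Lefschetz fibration $f_{\mathcal{O}} \colon W_{\mathcal{O}} \to D^2$. Concretely, factor $\phi$ as a product of positive and negative Dehn twists along curves $c_1, \ldots, c_n \subset P$, and attach $4$-dimensional $2$-handles to $P \times D^2$ along ordered parallel copies of these curves in $P \times \{\mathrm{pt}\}$ with framings $\mp 1$ matching the signs; the result is an achiral Lefschetz fibration whose boundary open book is exactly $\mathcal{O}$. Given two open books $\mathcal{O}_1, \mathcal{O}_2$ of $M^3$, this lets me produce fillings $W_{\mathcal{O}_1}, W_{\mathcal{O}_2}$ and then glue them as $X = W_{\mathcal{O}_1} \cup_{M^3} (-W_{\mathcal{O}_2})$, obtaining a closed $4$-manifold with two achiral Lefschetz fibration structures away from the gluing locus.

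Second, I would invoke the key $4$-dimensional stabilization theorem: any two achiral Lefschetz fibrations of a fixed closed $4$-manifold become isotopic after a finite sequence of two local moves on the critical set — the addition of cancelling $1$-$2$ handle pairs with the $2$-handle attached along an arc in a page (the interior $S$ move), and a wrinkling-style modification that locally replaces a pair of Lefschetz critical points (the interior $U$ move). Applying this to the two structures on $X$, and keeping careful track of which moves take place near $M^3 \subset X$, yields a sequence of moves on $W_{\mathcal{O}_1}$ supported near $\partial W_{\mathcal{O}_1} = M^3$ that converts its Lefschetz fibration into one isotopic to that on $W_{\mathcal{O}_2}$; moves in the deep interior of either piece do not affect the boundary open book and can be discarded.

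Third, I would check the boundary effect of each move. An $S$ move, in which a cancelling $1$-$2$ pair is attached so that the $2$-handle's attaching circle is a page arc $a$ together with the core arc $a'$ of the $1$-handle with framing $\pm 1$, plumbs a positive or negative Hopf band onto the page along $a$; on the boundary this is exactly a $\partial S_\pm$ move, as discussed in Section~\ref{sec:stab} and Figure~\ref{F:LefStab}. A $U$ move, which locally inserts or removes a pair of oppositely signed Lefschetz singularities whose vanishing cycles are boundary-parallel curves in a small disk $D$ of the page, contributes on the boundary a pair of oppositely signed Dehn twists about two disjoint subdisks of $D$ — precisely the definition of a $\partial U$ move.

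The main obstacle is the $4$-dimensional stabilization theorem in the second step. This is where the bulk of Piergallini--Zuddas's work lies, and it is substantially harder than either the $3$-dimensional extension in Step~1 or the boundary bookkeeping in Step~3. The argument needs a Kirby-calculus analysis adapted to Lefschetz singularities, carefully distinguishing handle slides that preserve the fibration structure from those that require wrinkling to achiralize the critical set, and ensuring that the $\mathrm{spin}^c$ data is realized correctly — which is crucial since Hopf stabilization alone cannot change the $\mathrm{spin}^c$ structure of the supported plane field, so the $\partial U$ move is genuinely needed in addition to $\partial S_\pm$ to connect all open books of $M^3$.
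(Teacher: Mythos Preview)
The paper does not prove this statement: Theorem~\ref{pzthm} is quoted from~\cite{pz} and used as a black box (to deduce Theorem~\ref{uniquenesscor}), so there is no proof in the paper to compare against.

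That said, your outline has a genuine structural gap at Step~2. Gluing $W_{\mathcal{O}_1}$ to $-W_{\mathcal{O}_2}$ along $M$ does \emph{not} produce a closed $4$-manifold carrying two achiral Lefschetz fibrations to be compared: each half carries one fibration, but since $\mathcal{O}_1\neq\mathcal{O}_2$ these do not match along $M$, and the glued-up $X$ has no obvious global fibration structure at all. So there is nothing for a ``closed-manifold ALF uniqueness theorem'' to act on, and the subsequent bookkeeping (``moves near $M$ versus moves in the interior'') has no meaning.

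The fix is to work on a single bounded $4$-manifold rather than a closed one. Pick any compact $W$ with $\partial W\cong M$ (for instance $W=W_{\mathcal{O}_1}$). The relevant existence theorem in~\cite{pz} is that \emph{any} prescribed open book on $\partial W$ is induced by some achiral Lefschetz fibration $W\to D^2$; apply it twice to get fibrations $f_1,f_2:W\to D^2$ inducing $\mathcal{O}_1,\mathcal{O}_2$ respectively. Now invoke the ALF \emph{uniqueness} theorem of~\cite{pz} for a fixed bounded $W$: $f_1$ and $f_2$ become isotopic after finitely many $S$ and $U$ moves and their inverses. Restricting to $\partial W$ then gives exactly the sequence of $\partial S_\pm$ and $\partial U$ moves you want; your Step~3 analysis of the boundary effect of $S$ and $U$ is correct. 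The heavy lifting you identify is real, but it lives in the bounded (not closed) setting.
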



We introduce a \emph{relative double twist} of a relative trisection diagram, which alters the $\textrm{spin}^c$ structure associated to $\O_\D$ by inducing a $\partial U$ move. By Theorem~\ref{pzthm}, we conclude that this move, together with the relative stabilization, will allow us to modify a relative trisection diagram so as to induce any desired open book on the boundary.

\begin{definition}[Relative double twist]\label{def:twist}
Let $\T=(X_1,X_2,X_3)$ be a $(g,k$; $p,b)$-relative trisection of a 4-manifold $X^4$.


Suppose there is a disk $D\subset \Sigma:=X_1\cap X_2\cap X_3$ disjoint from all the compression circles used to build the compression bodies $X_1\cap X_2, X_2\cap X_3, X_3\cap X_1$ on $\Sigma$. Then there is a copy $E_i=D\times I\times I$ in each $X_i$. Suppose that $E_i$ and $E_j$ agree in $X_i\cap X_j$ for each $i,j$. This condition is not automatically satisfied;  when this is true, we will say that $D$ {\emph{satisfies the relative double twist criterion.}} In this case, $E_1\cup E_2\cup E_3$ is a copy of $D\times D^2$, with $D$ identified with $D\times\{0\}$ and $(D\times D^2)\cap\partial X=D\times\partial D^2$.

Now consider the $(2,1;0,2)$-relative trisection $(Y_1,Y_2,Y_3)$ of $S^2\times D^2$ pictured on the right of Figure~\ref{fig:reldoubletwist}. There is a disk $D'$ in $Y_1\cap Y_2\cap Y_3$ that similarly gives a copy of $D'\times D^2$ inside $S^2\times D^2$.

\begin{figure}
	\labellist
		\pinlabel {$D$} at 120 110
		\pinlabel {$\Sigma$} at 120 165
		\pinlabel {$D'$} at 335 105
	\endlabellist
	\includegraphics[scale=.6]{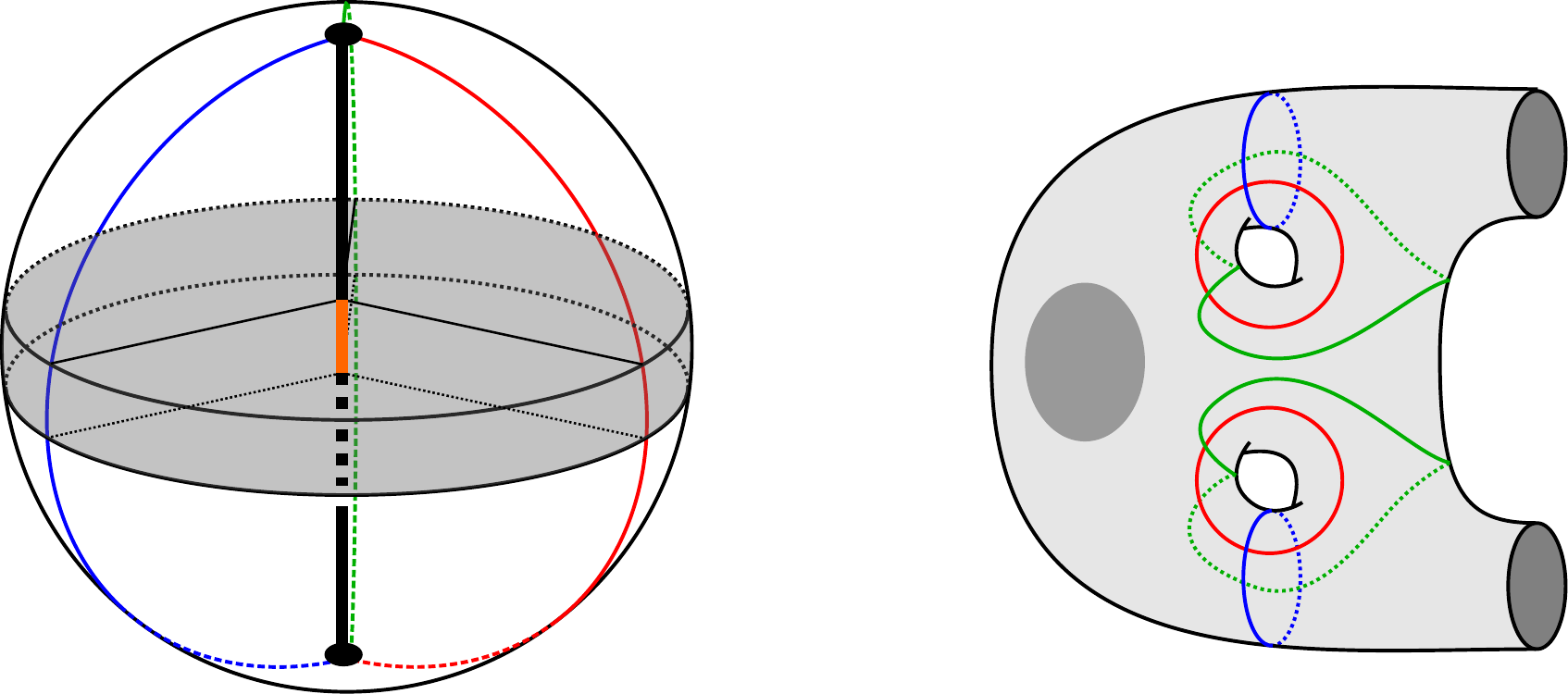}
	\caption{Left: a schematic of a disk $D$ in a trisection surface $\Sigma$ for a trisection $\T$ of a 4-manifold $X^4$ satisfying the relative double twist criterion. We shade the copy of $D\times D^2$ inside $X$. Right: a trisection diagram of he trisection $(Y_1,Y_2,Y_3)$ of $S^2\times D^2$. We shade the disk $D'$ as in Definition~\ref{def:twist}. We similarly find a copy of $D'\times D^2$ inside $S^2\times D^2$. To perform the relative double twist operation to $\T$ along $D$, we delete $D\times D^2$ from $X$ and $D'\times D^2$ from $S^2\times D^2$, then glue the resulting manifolds along $D\times\partial D^2\sim D'\times\partial D^2$ so that $X_i\setminus (D\times I\times I)$ glues to $Y_i\setminus(D'\times I\times I)$.}
	\label{fig:reldoubletwist}
\end{figure}

Let \[Z^4=(X^4\setminus(D\times D^2))\cup_{\partial D\times S^1\sim \partial D'\times S^1}(S^2\times D^2\setminus D'\times D^2),\] with \[Z_i=(X_i\setminus E_i)\cup(Y_i\setminus (D'\times D^2)).\]

Then $Z^4\cong X^4$, and $\T':=(Z_1,Z_2,Z_3)$ is a $(g+2,k+2;p,b+2)$-relative trisection of $X^4$. We say that $\T'$ is obtained from $\T$ by a {\emph{relative double twist along $D$.}} in Figure~\ref{F:Doubletwist}, we show how to obtain a relative trisection diagram for $\T'$ from one for $\T$.
Note that $\O_{\T'}$ is obtained from $\O_{\T}$ by a $\partial U$ move along $D\times\{\text{pt}\}$.

\end{definition}

\begin{figure}{\centering
\subcaptionbox{Preparing to do a relative double twist to a relative trisection diagram $\D=(\Sigma;\alpha,\beta,\gamma)$ along a disk $D$.
\label{F:Ua}}{
\labellist
  \pinlabel {$D$} at 110 80
\endlabellist
\includegraphics[width=2in]{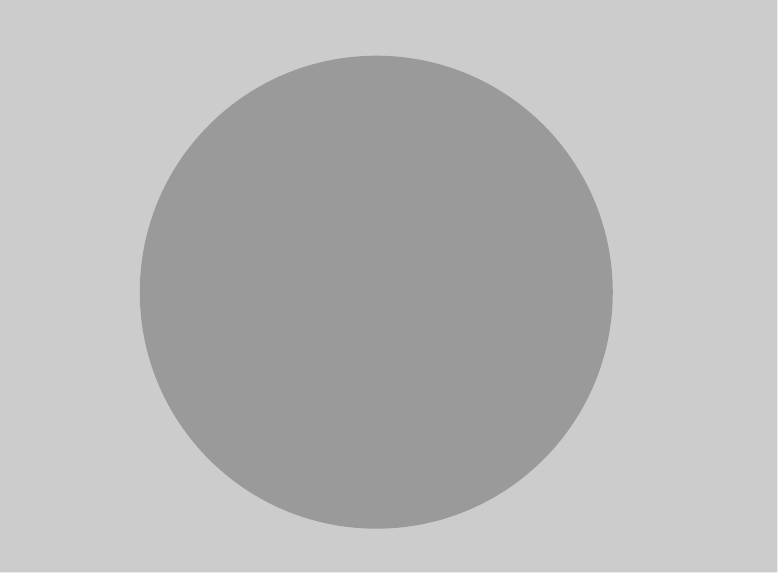}
}
\hspace{1cm}
\subcaptionbox{After performing the relative double twist, we obtain a relative trisection diagram $\D'=(\Sigma',\alpha',\beta',\gamma')$. 
\label{F:Ub}}{
\labellist
\endlabellist
\includegraphics[width=2in]{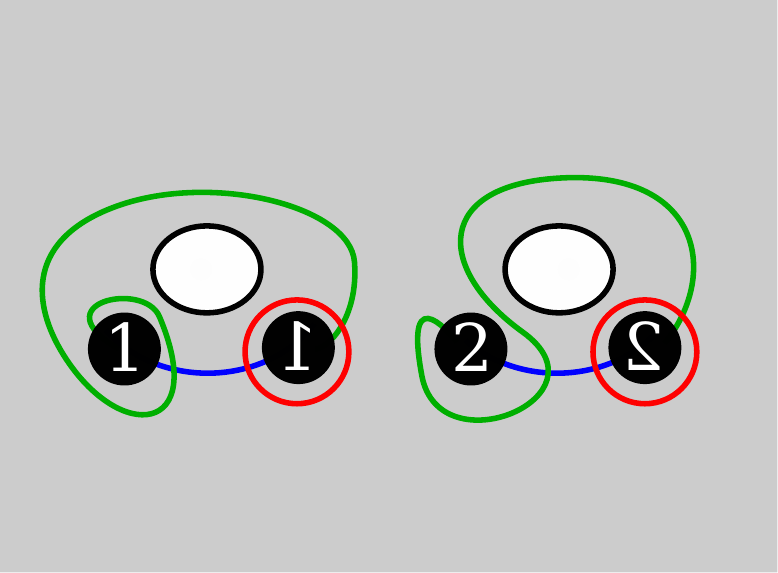}
}


\caption[width=\paperwidth]{Moving from Figure (A) to figure (B) is an instance of the relative double twist on relative trisection diagram, if $D$ satisfies the relative double twist criterion, as in Definition~\ref{def:twist}. See Remark~\ref{rem:dobdryu} if interested in finding such a disk.  The surface $\Sigma'$ is obtained from $\Sigma$ by deleting two open disks and attaching two tubes with $D$. Each of $\alpha,\beta,\gamma$ is included in $\alpha',\beta',\gamma'$ (correspondingly) and we draw the curves in $\alpha'-\alpha,\beta'-\beta,\gamma'-\gamma$.}
\label{F:Doubletwist}
}
\end{figure}

The description of the relative double twist move as being the result of gluing a 4-manifold to $S^2\times D^2$ after removing a 2-handle from each was inspired by the ``poking" move of Aranda and Moeller~\cite{jesseroman}.

\begin{proposition}\label{prop:dobdryu}
Let $\T=(X_1,X_2,X_3)$ be a relative trisection of $X^4$. By performing a relative double twist, we may induce any $\partial U$ move on $\O_\T$.
\end{proposition}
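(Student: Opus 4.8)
The plan is to show that the relative double twist move as defined in Definition~\ref{def:twist} realizes, on the induced open book $\O_\T$ of $\boundary X$, precisely a $\partial U$ move, and that by varying the disk $D$ (and the way the monodromy is presented) we can realize \emph{any} $\partial U$ move. The key observation is that the last sentence of Definition~\ref{def:twist} already records that $\O_{\T'}$ is obtained from $\O_\T$ by a $\partial U$ move along $D\times\{\mathrm{pt}\}$; so the real content of the proposition is the ``any'' — namely, that the relative double twist criterion can always be arranged so that $D\times\{\mathrm{pt}\}$ sits in an arbitrarily prescribed complementary region of a chosen collection of defining Dehn-twist curves for the monodromy.

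First I would set up the correspondence carefully: given a $\partial U$ move on $\O_\T$, by definition it is performed in some disk $D_0$ lying in a page $P$ and disjoint from the curves $C_1,\dots,C_n$ along which the monodromy $\phi$ is written as a product of Dehn twists. I need to produce a disk $D$ in the trisection surface $\Sigma = X_1\cap X_2\cap X_3$, satisfying the relative double twist criterion, whose ``$\times\{\mathrm{pt}\}$'' copy in $\boundary X$ is isotopic (rel the appropriate structure) to $D_0$. The natural candidate: take $D$ to be a disk in $\Sigma$ which, under the product structure $X_i\cap\boundary X \cong (X_i\cap X_j\cap\boundary X)\times I$, corresponds to $D_0\subset P$ pushed slightly into the interior; more concretely, one uses that a page of $\O_\T$ is a copy of (roughly) $\Sigma$ with the $2$-handle disks attached, and that $D_0$ can be isotoped off those disks to live in a subsurface identified with a subsurface of $\Sigma$. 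I would then check that because $D_0$ is disjoint from all the vanishing-cycle/monodromy curves — which are exactly (up to the monodromy algorithm's bookkeeping) the traces of the compression circles used to build $X_1\cap X_2, X_2\cap X_3, X_3\cap X_1$ — the corresponding disk $D$ in $\Sigma$ is disjoint from all those compression circles, and moreover the product boxes $E_i = D\times I\times I$ in the three sectors agree on overlaps because $D$ lies in a region where the three product structures of clause (v) of the relative trisection definition are compatible. That is where the relative double twist criterion gets verified.

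The main obstacle I anticipate is this compatibility check: showing that a disk disjoint from the defining curves in the page can be promoted to a disk $D\subset\Sigma$ for which the three copies $E_1,E_2,E_3$ literally agree in the pairwise intersections, i.e. glue to a copy of $D\times D^2$ with $(D\times D^2)\cap\boundary X = D\times\boundary D^2$. The cleanest way to handle this is to work from the Lefschetz-fibration/open-book picture: near $\boundary X$ the relative trisection is standard and looks like (page)$\times D^2$ with the monodromy glued in, so a disk in a page that misses all the Lefschetz vanishing cycles bounds a genuine $D\times D^2$ in a collar of the boundary; then one pushes this collar neighborhood inward into $X$, keeping it disjoint from the cores of the $2$-handles (possible since $D_0$ was already disjoint from their attaching regions), to land in $\Sigma$ itself. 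Equivalently, one can invoke the ``poking''/wrinkling description and the fact that wrinkling does not move the open book, so the $\partial U$ move on the wrinkled Lefschetz fibration (which is elementary — it is literally adding a trivial $S^2\times D^2$ summand along a $2$-handle as in~\cite{pz}) matches the relative double twist on the trisection.

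Finally I would assemble the argument: given an arbitrary $\partial U$ move on $\O_\T$, choose the disk $D_0$ realizing it, produce $D\subset\Sigma$ as above satisfying the relative double twist criterion, apply Definition~\ref{def:twist} to get $\T'=(Z_1,Z_2,Z_3)$ with $X_{\T'}\cong X$, and note that by the last line of Definition~\ref{def:twist} the induced open book $\O_{\T'}$ is $\O_\T$ with a $\partial U$ move performed along $D\times\{\mathrm{pt}\} = D_0$ — which is the prescribed move. This completes the proof. I would keep the exposition short, cite clause (v) of the relative trisection definition and Figure~\ref{fig:reldoubletwist} for the local model, and defer the most hands-on diagrammatic verification to Remark~\ref{rem:dobdryu}, which the excerpt already promises will explain how to locate such a disk $D$.
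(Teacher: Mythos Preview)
Your proposal is correct and follows essentially the same approach as the paper: choose the disk $D_0$ in the page $\Sigma_\alpha$ disjoint from the Dehn-twist curves defining $\phi$, view it as a disk $D\subset\Sigma$, verify the relative double twist criterion, and apply Definition~\ref{def:twist}. The paper's proof is considerably terser---it dispatches the compatibility check in a single sentence (``Since $D$ is fixed by $\phi$, it is possible to isotope $X_2$ and $X_3$ so that the copies of $D\times I\times I$ in each $X_i$ agree in $X_i\cap X_j$''), whereas you propose a more hands-on verification via the Lefschetz-fibration collar model; but the underlying idea is the same, and the paper likewise defers the diagrammatic details to Remark~\ref{rem:dobdryu}.
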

\begin{proof}
Choose a $\partial U$ move on $\O_\T$. That is, factor the monodromy of $\O_\T$ into a product of Dehn twists and choose a disk $D$ in $\Sigma_{\alpha}$ that is disjoint from each Dehn twisted curve in the monodromy $\phi$ of $\O_\T$.  View $D$ as being in $\Sigma$. Since $D$ is fixed by $\phi$, it is possible to isotope $X_2$ and $X_3$ so that the copies of $D\times I\times I$ in each $X_i$ agree in $X_i\cap X_j$ (i.e., so that $D$ satisfies the relative double twist criterion). Then perform a relative double twist on $\T$ along $D$.

\end{proof}

\begin{remark}\label{rem:dobdryu}

In the proof of Proposition~\ref{prop:dobdryu}, we may have to isotope $X_2$ and $X_3$ in order to do the relative double twist. One should view this as analogous to the condition that in order to perform a $\partial U$ move on an open book, we must first isotope the open book to fix a disk pointwise. If we start with a relative trisection diagram $\D=(\Sigma;\alpha,\beta,\gamma)$ for $X$ in mind, then this isotopy might induce slides of the $\beta$ and $\gamma$ curves in $X$ that might seem mysterious.

In practice, to achieve a relative double twist, it is helpful to start with a relative trisection diagram in which the support of the open book monodromy $\phi$ is easily visible. Once this is done, the process for the relative double twist is illustrated in Figure~\ref{F:dobdryu} and explained in what follows. Slide the $\alpha,\beta$ curves to make them standard, so that $\D$ yields a Kirby diagram of $X$ containing $\Sigma_\alpha$, as in~\cite{price,nonorientable}.  The algorithm of~\cite{withboundary} reverses this procedure (up to interior (de)stabilization) by projecting the 2-handle attaching circles to $\Sigma_{\alpha}$ and attaching certain decorated tubes (see~\cite{withboundary} for details). Now $\phi$ is supported in a neighborhood of the projected $2$-handle circles. Let $D$ be a disk in $\Sigma_{\alpha}$ disjoint from any tubes or projections of 2-handle circles. Choose an arc $\delta$ in $(\Sigma_{\alpha}\cap\Sigma)\setminus\mathring{D}$ from $\partial D$ to $\partial \Sigma_{\alpha}$. (The arc $\delta$ may intersect projections of 2-handle circles.) Then we may obtain a trisection diagram $\D'$ of $X$ by adding two dotted circles (one of which is a double of $\delta$) and two 2-handles to the Kirby diagram as in Figure~\ref{F:dobdryu} and then applying the procedure of~\cite{withboundary} to obtain a relative trisection diagram. The trisection described by $\D'$ is obtained from the one described by $\D$ up to interior (de)stabilization.

\begin{figure}{\centering
\subcaptionbox{A relative trisection diagram $\D=(\Sigma;\alpha,\beta,\gamma)$ with $\alpha$ and $\beta$ standard.
\label{F:twista}}{
\labellist
\endlabellist
\includegraphics[width=1.2in]{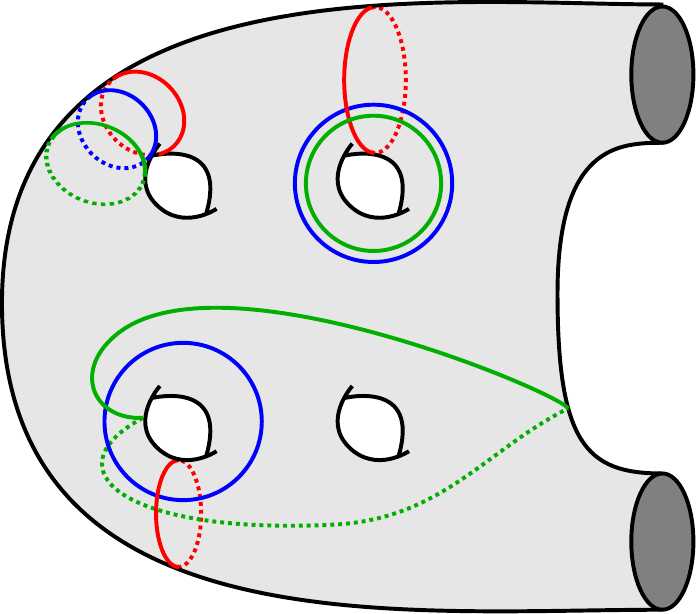}
}
\hspace{1cm}
\subcaptionbox{As in~\cite{price,nonorientable}, we obtain a Kirby diagram from $\D$ containing $\Sigma_{\alpha}$. Note the projections of the 2-handle attaching circles to $\Sigma_{\alpha}$. 
\label{F:twistb}}{
\labellist
    \pinlabel {\textcolor{ForestGreen}{$-1$}} at 15 50
\endlabellist
\includegraphics[width=1.2in]{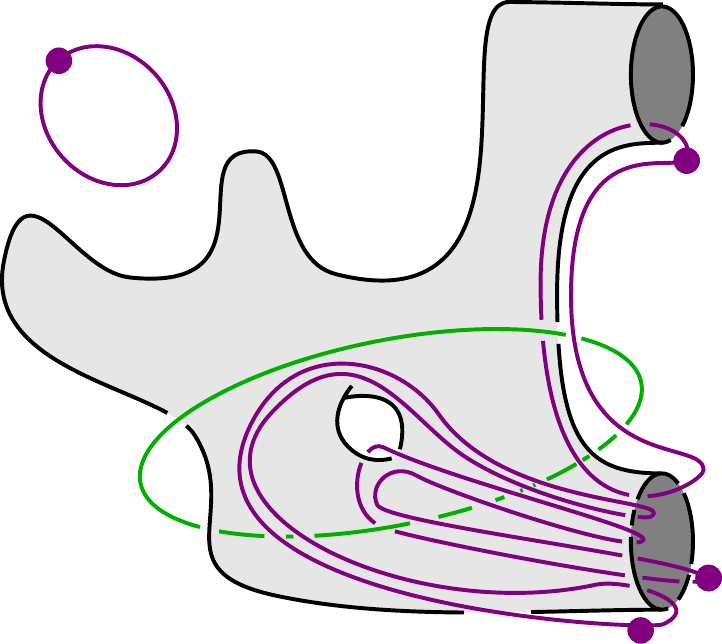}
}
\hspace{1cm}
\subcaptionbox{Choose a disk $D$ (shaded) in $\Sigma_{\alpha}$ disjoint from the projections of the 2-handle curves, and an arc $\delta$ from $\partial D$ to $\partial\Sigma_{\alpha}$.}
{
\labellist
    \pinlabel {\textcolor{ForestGreen}{$-1$}} at 15 50
\endlabellist
\includegraphics[width=1.2in]{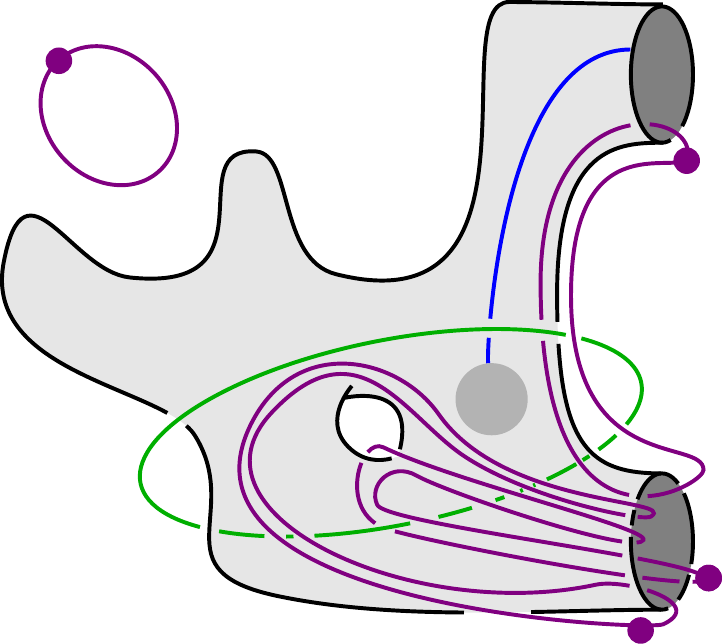}
}

\vspace{1cm}
\subcaptionbox{Delete two open disks in $\Sigma_{\alpha}$ in $D$. Add two 2-handles (with framing $+1$ and $-1$, not pictured for space) to the Kirby diagram around these holes, and two dotted circles (one linking the two 2-handles, the other doubling the arc $\delta$).}
{
\labellist
    \pinlabel {\textcolor{ForestGreen}{$-1$}} at 25 50
\endlabellist
\includegraphics[width=2in]{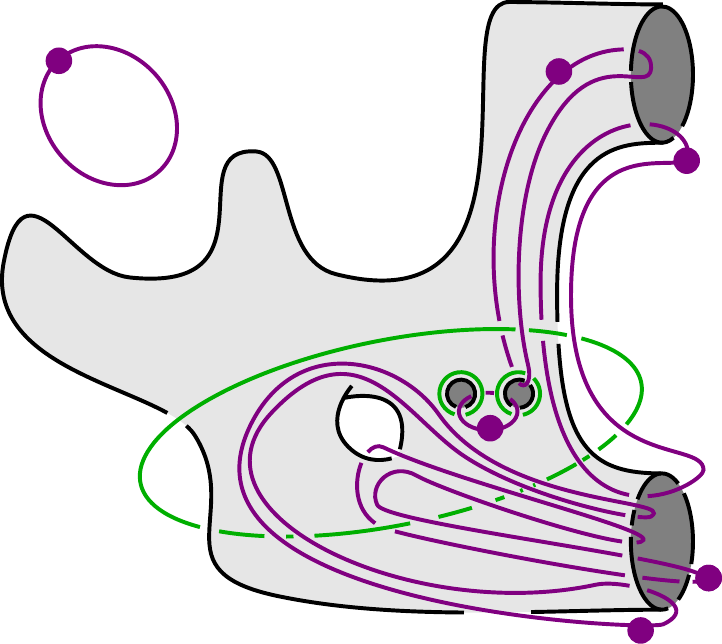}
}
\hspace{1cm}
\subcaptionbox{Perform the algorithm of~\cite{withboundary} to turn the picture back into a relative trisection diagram $\D'$. The open book $\O_{\D'}$ is obtained from $\O_{\D}$ by a $\partial U$ move along $D$; the relative trisection $\T_{\D'}$ is obtained from $\T_\D$ by a relative double twist along $D$ up to interior (de)stabilization. Note that Figure~\ref{F:Ub} is contained in $\D'$.}
{

\includegraphics[width=2in]{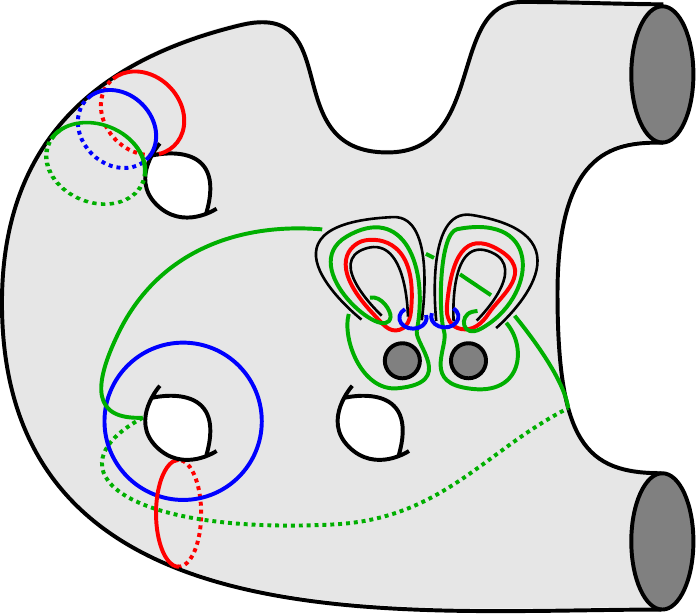}
}

\caption[width=\paperwidth]{Moving from Figure (A) to figure (E), we show how to diagrammatically perform the relative double twist to a relative trisection diagram. Not illustrated: we preemptively standardize the $\alpha$ and $\beta$ curves of the starting relative trisection diagram.}
\label{F:dobdryu}
}
\end{figure}



\end{remark}

Thus, a relative trisection of a given 4-manifold is unique up to a simple set of moves. Previously, this was only known among relative trisections with the same boundary data~\cite{gaykirby} or suitably similar boundary data~\cite{nickthesis}.

\begin{theorem}\label{uniquenesscor}
Let $\T$ and $\T'$ be relative trisections of a 4-manifold $X$, with a fixed identification $X\cong X_\T\cong X_{\T'}$. Then $\T$ and $\T'$ are related by a sequence of ambient isotopies, stabilizations, relative stabilizations, relative double twists, and the inverses of these moves.
\end{theorem}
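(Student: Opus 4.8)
The plan is to reduce the uniqueness of relative trisections of $X$ to Piergallini--Zuddas's uniqueness theorem for open books (Theorem~\ref{pzthm}) together with Gay--Kirby's uniqueness theorem for relative trisections inducing the \emph{same} open book. First I would recall that given any two relative trisections $\T$ and $\T'$ of $X$, each induces an open book $\O_\T$ and $\O_{\T'}$ on $\boundary X$. These two open books need not be equivalent; however, by Theorem~\ref{pzthm}, they are related by a finite sequence of $\boundary S_\pm$ moves (Hopf stabilizations) and $\boundary U$ moves and their inverses. The strategy is to realize each move in this sequence by a move on the relative trisection level.

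The key steps, in order, are as follows. (1) Each positive or negative Hopf stabilization $\boundary S_\pm$ of the boundary open book is realized by a relative stabilization of the trisection: by Definition~\ref{def:relstab}, relatively stabilizing $\T$ along an appropriate arc in a page $\Sigma_\alpha$ produces a new trisection of the \emph{same} $X$ (the boundary connected sum with the $(1,1;0,2)$-trisection of $B^4$ is $X \natural B^4 \cong X$) whose induced open book is the corresponding Hopf stabilization of $\O_\T$. Since a Hopf stabilization can be performed along any square in any page, and interior structure away from the arc is unchanged, we can realize the prescribed $\boundary S_\pm$ move. (2) Each $\boundary U$ move (and its inverse) on the boundary open book is realized by a relative double twist (or its inverse): this is exactly the content of Proposition~\ref{prop:dobdryu}, which says that by performing a relative double twist along a suitable disk $D$ we may induce any prescribed $\boundary U$ move on $\O_\T$, and the relative double twist again produces a trisection of the same $X \cong X \#_{S^2\times D^2\text{-glued}} S^2\times D^2$. (3) After applying the sequence of relative stabilizations and relative double twists dictated by the Piergallini--Zuddas sequence, we obtain from $\T$ a relative trisection $\widetilde{\T}$ of $X$ whose induced open book on $\boundary X$ is now \emph{isotopic} to $\O_{\T'}$. (4) Finally, invoke Gay--Kirby: two relative trisections of $X$ which induce the same (isotopic) open book on $\boundary X$ become isotopic after finitely many interior stabilizations of each. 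Applying interior stabilizations to $\widetilde{\T}$ and to $\T'$ makes them isotopic, which chains together with the previous moves to relate $\T$ and $\T'$.

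A few technical points need care. First, the $\boundary U$ moves in the Piergallini--Zuddas sequence are specified in terms of a disk in a page disjoint from the Dehn-twisted curves of the monodromy; to apply Proposition~\ref{prop:dobdryu} we must be able to arrange, by isotopy of $X_2, X_3$ (and corresponding handleslides on a diagram, as discussed in Remark~\ref{rem:dobdryu}), that the relevant disk $D$ satisfies the relative double twist criterion. This is always possible because $D$ is fixed by the monodromy $\phi$, so the product structures on $X_2 \cap \boundary X$, $X_3 \cap \boundary X$ can be isotoped to agree over $D$; this isotopy happens in the interior and does not change the diffeomorphism type of the trisection. Second, we must be careful that the moves realizing the open-book sequence do not accidentally change $X$: both relative stabilization and relative double twist are defined precisely so that the resulting 4-manifold is canonically identified with the original $X$, so this is automatic. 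Third, one should note that the inverses of $\boundary S_\pm$ and $\boundary U$ correspond to the inverses of relative stabilization and relative double twist, which are included among the allowed moves.

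The main obstacle, and the step requiring the most attention, is Step (1)--(2): verifying that the relative stabilization and relative double twist moves on the \emph{trisection} genuinely induce \emph{exactly} the $\boundary S_\pm$ and $\boundary U$ moves on the boundary open book that Piergallini--Zuddas require, and that every such boundary move (along every choice of square or disk in every page) can be realized. The passage through Lefschetz fibrations and the wrinkling construction (Figures~\ref{F:LefStab} and~\ref{F:wrinkle}) is what justifies that relative stabilization induces Hopf stabilization, and Proposition~\ref{prop:dobdryu} handles the $\boundary U$ case; the remaining subtlety is confirming that the choice of page and square/disk is unconstrained enough to match an arbitrary Piergallini--Zuddas sequence, which is handled by the observation that any two arcs (resp. disks) in the union of pages can be isotoped into a single chosen page $\Sigma_\alpha$ up to moves already accounted for. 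Once these identifications are in place, the theorem follows by simply concatenating the sequences of moves.
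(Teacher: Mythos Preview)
Your proposal is correct and follows essentially the same approach as the paper's proof: use Piergallini--Zuddas (Theorem~\ref{pzthm}) together with Proposition~\ref{prop:dobdryu} and the definition of relative stabilization to modify $\T$ until its induced open book matches $\O_{\T'}$, then invoke Gay--Kirby's interior-stabilization uniqueness. The paper's proof is simply a more compressed version of your argument, omitting the technical verifications you spell out in steps (1)--(2) and the discussion of the relative double twist criterion.
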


\begin{proof}
By Theorem~\ref{pzthm} and Proposition~\ref{prop:dobdryu}, we may perform relative stabilizations, relative double twists, and inverse moves to $\T$ until we obtain a relative trisection $\T''$ with $\O_{\T''}$ ambiently isotopic to $\O_{\T'}$. Then by~\cite[Theorem 21]{gaykirby}, we can perform interior stabilizations and destabilizations to $\T'$ to obtain a relative trisection ambiently isotopic to $\T$.
\end{proof}


\subsection{The Monodromy Algorithm}\label{sec:monodromy}
An essential component to encoding a relatively trisected $4$-manifold via a trisection diagram is the monodromy algorithm from~\cite{cgp}, which we review here for completeness. Let $\mathcal{D}=(\Sigma; \alpha, \beta,\gamma)$ be a $(g,k;p,b)$ relative trisection diagram for a $4$-manifold $W$ with connected boundary. The page of the open book induced by $\mathcal{D}$ is the genus $p$ surface with $b$ boundary components $\Sigma_\alpha$ obtained by surgering $\Sigma$ along the $\alpha$ curves. Any essential, properly embedded arc in the page $\Sigma_\alpha$ which misses the disks resulting from surgering $\Sigma$ along the $\alpha$ curves can be identified with an arc in the trisection surface $\Sigma$.

Let $A_\alpha=\{a_1, \ldots, a_l\}$ be collection of $l=2p+b-1$ disjoint, essential, properly embedded arcs in $\Sigma_\alpha$ such that their complement in $\Sigma_\alpha$ is a disk (We will think of $A_\alpha$ as a subset of both $\Sigma$ and $\Sigma_\alpha$ and will specify which surface the arcs are in when necessary.) We fix a point on each boundary component of $\Sigma$, and we consider all arcs in this paper up to isotopies which do not take the endpoints of the arcs over these fixed points. The following algorithm produces a collection of arcs $\overline{A}=\{\overline{a}_1, \ldots, \overline{a}_l\} \subset \Sigma_\alpha$ which defines a diffeomorphism $\phi:\Sigma_\alpha \rightarrow \Sigma_\alpha$ by requiring $\phi(a_i)=\overline{a}_i$ for each $i$. This $\phi$ is the monodromy of the open book $\O_{\D}$.

\begin{algorithm}\cite{cgp}
\begin{itemize}
	\item[1.] Slide $\alpha$ curves and $A_\alpha$ over $\alpha$ curves (without introducing intersections to $\alpha\cup A_\alpha$) and $\beta$ curves over $\beta$ curves (without introducing self-intersections to $\beta$) until they are disjoint from $\beta$. Call the resulting collection of arcs $A_\beta=\{b_1, \ldots, b_l\}$, where $b_i$ is obtained from $a_i$. Let $\beta'$ be the curves resulting from sliding $\beta$, so $\beta'\cap A_\beta=\emptyset$.
	\item[2.] Slide $\beta'$ curves and $A_\beta$ over $\beta'$ curves (without introducing intersections to $\beta'\cup A_\beta$) and slide $\gamma$ curves over $\gamma$ curves (without introducing self-intersections to $\gamma$) until they are disjoint from $\gamma$. 
	Call the resulting collection of arcs $A_\gamma = \{c_1, \ldots, c_l\}$, where $c_i$ is obtained from $b_i$. Let $\gamma'$ be the curves resulting from sliding $\gamma$, so $\gamma'\cap A_\gamma=\emptyset$.
	\item[3.] Slide $\gamma'$ curves and $A_\gamma$ over $\alpha$ curves (without introducing intersections to $\gamma'\cup A_\gamma$) until they are is disjoint from $\alpha$. Call the resulting collection of arcs $\tilde{A} = \{\tilde{a}_1, \ldots, \tilde{a}_l\}$, where $\tilde{a}_i$ is obtained from $c_i$. Let $\alpha'$ be the curves resulting from sliding $\alpha$, so $\alpha'\cap \tilde{A}=\emptyset$.
	\item[4.] Slide $\alpha'$ and $\tilde{A}$ over $\alpha'$ curves until $\alpha'$ is again equal to the original $\alpha$ curves, while always keeping the curves and arcs disjoint. Call the resulting collection of arcs $\overline{A} = \{\overline{a}_1, \ldots, \overline{a}_l\}$, where $\overline{a}_i$ is obtained from $\tilde{a}_i$. We have $\alpha\cap\overline{A}=\emptyset$.
\end{itemize}
Since $A_\alpha,\overline{A}\subset\Sigma_{\alpha}$ each have complement a disk, we may uniquely define $\phi:\Sigma_{\alpha}\to\Sigma_{\alpha}$ up to isotopy by specifying that $\phi(a_i)=\overline{a}_i$.
\end{algorithm}

\begin{remark}\label{R:1}It is helpful to keep the following facts in mind when performing the above algorithm:
	\begin{itemize}
		\item[1.] Such slides in each step of the algorithm exist since we know any pair of curves can be made to be in standard positions.
		\item[2.] Two types of choices are made when performing the algorithm: the choice of arcs $A_\alpha$ and the choice of arc slides in each step. An important part of the proof of the algorithm is that $\phi$ is independent of these choices, up to isotopy and conjugation in the mapping class group of $\Sigma_{\alpha}$.
		\item[3.] By starting with a standard set of $\alpha$ and $\beta$ curves, we may find initial arcs for this algorithm which are disjoint from both sets of curves. This makes the first step of the algorithm redundant.
	\end{itemize}
\end{remark}

\subsection{The \texorpdfstring{\(\L\)}{L}--invariant} \label{sec:closedL}

In this section, we briefly review the definitions and some of the results in~\cite{linvariant}. We must first understand a certain complex associated to a surface.

\begin{definition}
\label{def:HTComplex}
Given a closed orientable surface $\Sigma$, the \textbf{cut complex} of $\Sigma$, $HT(\Sigma)$, is the simplicial complex built as follows.

Each vertex of $HT(\Sigma)$ corresponds to a cut system for $\Sigma$; that is, a collection of $g$ non-separating simple closed curves on $\Sigma$ whose complement in $\Sigma$ is a punctured sphere.

Each edge in $HT(\Sigma)$ is either type 0 or type 1. If $v,v'$ are vertices corresponding to cut systems $(\alpha_1, \alpha_2,...,\alpha_g)$ and $(\alpha'_1, \alpha'_2,...,\alpha'_g)$, respectively. There is a type 0 edge between $v$ and $v'$ if (up to reordering of either or both cut systems) $\alpha_i=\alpha'_i$ for $i>1$ and $\alpha_1\cap\alpha'_1=\emptyset$. (This relation is sometimes called a generalized handleslide.) Similarly, there is a type 1 edge between $v$ and $v'$ if (up to reordering of either or both cut systems) $\alpha_i=\alpha'_i$ for $i>1$ and $\alpha_1$ intersects $\alpha'_1$ transversely in a single point. 
\end{definition}

Let $\T=(X_1,X_2,X_3)$ be a trisection of a closed 4-manifold $X$. Let $\D=(\Sigma;\alpha,\beta,\gamma)$ be a diagram for $\T$. Each of $\alpha, \beta$, and $\gamma$ are cut systems for $\Sigma$, so correspond to vertices $v_\alpha, v_\beta$, and $v_\gamma$ of $HT(\Sigma)$. We observe that any two cut systems related by slides correspond to vertices in $HT(\Sigma)$ connected by a path of type 0 edges. Let $HT^0(\Sigma)$ be the complex obtained from $HT(\Sigma)$ by deleting all type 1 edges (leaving only type 0 edges). A trisection then naturally gives rise to three connected subgraphs of the cut complex $HT(\Sigma)$, which we denote by $\Gamma_\alpha$, $\Gamma_\beta$, $\Gamma_\gamma$, where $\Gamma_*$ is the component of $HT^0(\Sigma)$ containing $v_*$.

\begin{figure}
	\labellist
		\pinlabel \rotatebox{-90}{\resizebox{8pt}{.8in}{$\}$}} at 90 5
		\pinlabel {$k_1$} at 90 -10
		\pinlabel \rotatebox{-90}{\resizebox{8pt}{.8in}{$\}$}} at 257 5
		\pinlabel {$g-k_1$} at 257 -10
	\endlabellist
	\includegraphics[scale=.6]{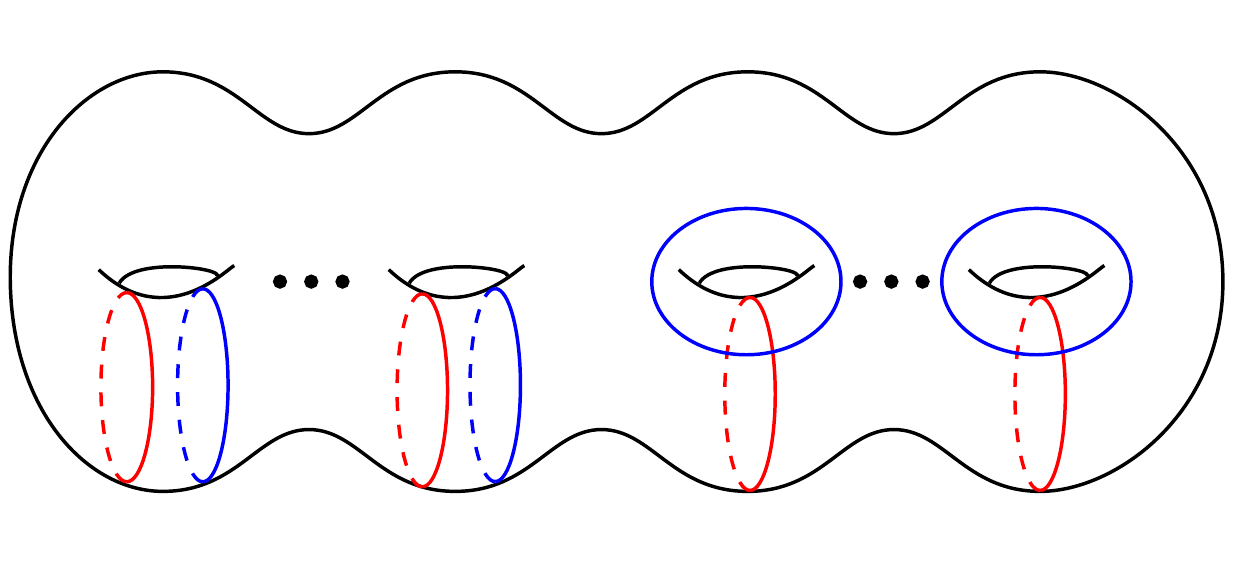}
	\caption{Standard genus $g$ Heegaard diagram for $\#_{k_1} S^1\times S^2$.}
	\label{F:HSStandard}
\end{figure}

Recall that the 3-dimensional handlebodies in a trisection pairwise form Heegaard splittings for $\#_{k_i}S^1 \times S^2$. By Waldhausen's theorem~\cite{FW}, these Heegaard splittings have diagrams which, after handle slides and a diffeomorphism, can be made to look like the diagram in Figure~\ref{F:HSStandard} and in this position, one can use $g-{k_i}$ type 1 edges to pass between the vertices. Following~\cite{linvariant}, we call a pair of cut systems $\alpha_\beta \in \Gamma_\alpha$ and $\beta_\alpha \in \Gamma_\beta$ defining a genus $g$ Heegaard splitting of $\#_{k_1} S^1 \times S^2$ {\emph{good}} if they are connected by a path of exactly $g-{k_1}$ type 1 edges (and similarly for the pairs $(\beta,\gamma)$, $(\gamma,\alpha)$).

\begin{definition}
\label{def:unnormalizedL}
Let $X$ be a 4-manifold with $(g,k)$-trisection $\T$, and let $H_\alpha \cup H_\beta \cup H_\gamma$ be the spine of $\T$. Let $\delta$ be a loop in $HT(\Sigma)$. We say that $\delta$ is {\emph{valid}} with respect to $\T$ if $\delta$ includes (not necessarily distinct) vertices $\alpha_\gamma,\alpha_\beta,\beta_\alpha,\beta_\gamma,\gamma_\beta,\gamma_\alpha$ in cyclic order so that:
\begin{enumerate}
    \item The segment of $C$ between $\alpha_\gamma,\alpha_\beta$ (inclusive) lies in $\Gamma_\alpha$,
    \item The segment of $C$ between $\beta_\alpha,\beta_\gamma$ (inclusive) lies in $\Gamma_\beta$,
    \item The segment of $C$ between $\gamma_\beta,\gamma_\alpha$ (inclusive) lies in $\Gamma_\gamma$,
    \item The cut systems associated to the pairs $(\alpha_\beta,\beta_\alpha), (\beta_\gamma,\gamma_\beta), (\gamma_\alpha,\alpha_\gamma)$ are all good pairs, and the edges of $\delta$ between these pairs are all type 1.
\end{enumerate}

We define $l_{X,\T}$ to be the length of the shortest loop in $HT(\Sigma)$ which is valid with respect to $\T$. 


    

We then appropriately normalize, taking \[\L_{X,\T} = l_{X,\T}-3g+k_1+k_2+k_3.\]
\end{definition}
To see why the normalization in the definition of $\L_{X,\T}$ is appropriate, we analyze how $\L_{X,\T}$ changes under stabilization of $\T$.

Say $\T$ and $\T'$ are trisections of closed 4-manifolds with triple intersection surfaces $\Sigma$ and $\Sigma'$, respectively. Suppose $\delta$ and $\delta'$ are loops in the cut complexes $HT(\Sigma)$, $HT(\Sigma')$ respectively, where $\delta$ is valid with respect to $\T$ and $\delta'$ is valid with respect to $\T'$. Let $\alpha_\gamma,\ldots,\gamma_\alpha$ and $\alpha'_\gamma,\ldots,\gamma'_\alpha$ be the distinguished vertices of $\delta,\delta'$ as in Definition~\ref{def:unnormalizedL}. Then we may find a loop in $HT(\Sigma\#\Sigma')$ valid with respect to $\T\#\T'$ (see Definition~\ref{stabdef}) in which each vertex splits into the disjoint union of a cut system for $\T$ and a cut system for $\T'$. We start at the vertex $\alpha_\gamma\sqcup\alpha'_\gamma$, by which we mean the vertex whose cut system corresponds to the union of the cut systems for $\alpha_\gamma$ and $\alpha'_\gamma$. We then add edges corresponding to those of $\delta$ between $\alpha_\gamma$ and $\alpha_\beta$, followed by edges corresponding to those of $\delta'$ between $\alpha'_\gamma$ and $\alpha'_\beta$. Then we add edges corresponding to those of $\delta$ between $\alpha_\beta$ and $\beta_\alpha$, and so on, ending with edges corresponding to the segment of $\delta'$ between $\gamma'_\alpha$ and $\alpha'_\gamma$. This loop has length the sum of the lengths of $\delta$ and $\delta'$, so we conclude $l_{X,\T\#\T'}\le l_{X,\T}+ l_{X,\T'}$. When $\T'$ is a genus-1 trisection of $S^4$ (so $\T\#\T'$ is a stabilization of $\T$), then $\delta'$ can be taken to be length two. See Figure~\ref{F:S4genus1}. If $\T$ is a $(g,k)$-trisection, this yields
\begin{figure}
    \centering
    \includegraphics[scale=.7]{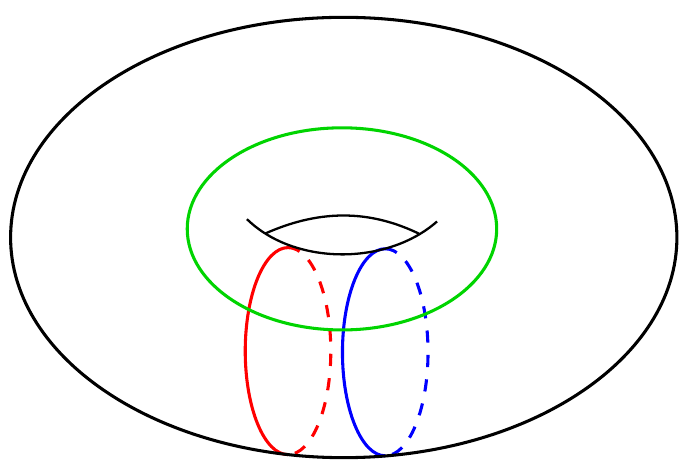}
    \caption{A genus $1$ trisection of $S^4$ whose corresponding path in the cut complex has length $2$.}
    \label{F:S4genus1}
\end{figure}
\begin{align*}
    l_{X,\T\#\T'}&\le l_{X,\T}+2\\
    \L_{X,\T\#\T'}+3(g+1)-k_1-k_2-k_3-1&\le \L_{X,\T}+3g-k_1-k_2-k_3+2\\
    \L_{X,\T\#\T'}&\le \L_{X,\T}.
\end{align*}
Thus, $L_{X,T}$ does not increase under stabilization. This should be kept in mind for the next definition.
 
\begin{definition}[{\cite[Definition 11]{linvariant}}]
Let $X$ be a smooth, closed, orientable 4-manifold. We define $\L(X) = \min_{\T}\{\L_{X,\T}\}$ where $\T$ ranges over all trisections of $X$.
\end{definition}

\section{Relative \texorpdfstring{$\L$}{L}-invariant}\label{sec2:relLdef}
\subsection{Definitions}
In this section, we define a non-negative integer invariant of a relatively trisected $4$-manifold $X$ with boundary (or just a $4$-manifold $X$ with boundary, by minimizing over trisections of $X$). We mirror the definition of the $\L$-invariant of a closed $4$-manifold from Section~\ref{sec:closedL}.

In the relative case, the individual sets of $(g-p)$ $\alpha, \beta$, and $\gamma$ curves of a relative trisection $(\Sigma;\alpha,\beta,\gamma)$ will generally not form a cut system of $\Sigma$, where by “cut system” we mean a set of curves and arcs on $\Sigma$ whose complement in $\Sigma$ is a disk. (In fact, $\Sigma\setminus\alpha$ must be a genus--$p$ surface with $b$ boundary components, so if $p>0$ or $b>1$, then $\alpha$ is certainly not a cut system for $\Sigma$.)  
Instead, the $\alpha$ curves along with some $2p+b-1$ disjoint arcs $A_\alpha$ may form a cut system for $\Sigma$. Similarly, we find sets of $l=2p+b-1$ arcs $A_\beta$ and $A_\gamma$ so that $\beta\cup A_\beta$ and $\gamma\cup A_\gamma$ are cut systems for $\Sigma$.

\begin{definition}
Let $(\Sigma;\alpha,\beta,\gamma)$ be a relative trisection diagram of trisection $\T$. Let $A_\alpha,A_\beta,A_\gamma$ be sets of disjoint, properly embedded arcs in $\Sigma$ with $A_\alpha\cap\alpha=A_\beta\cap\beta=A_\gamma\cap\gamma=\emptyset$ with the property that each of $\alpha\cup A_\alpha,\beta\cup A_\beta,\gamma\cup A_\gamma$ is a cut system for $\Sigma$. We call $(\Sigma;\alpha,\beta,\gamma;A_\alpha,A_\beta,A_\gamma)$ an {\emph{arced relative trisection diagram}} of $\T$.
\end{definition}

We implicitly find an arced relative trisection diagram of relative trisection $\T$ when performing the algorithm of Section~\ref{sec:monodromy} to determine the monodromy of $\O_\T$. 
This is our justification for why cut systems on $\Sigma$ consisting of $(g-p)$ closed curves and $l$ arcs are natural to consider when studying the relative trisection $\T$. 
We now construct a complex associated to a surface with boundary which is analogous to the cut complex of a closed surface as described in Section~\ref{sec:closedL}.

\begin{definition}
\label{def:HTpComplex}
Let $\Sigma$ be a compact orientable genus--$g$ surface with $b\ge 1$ boundary components. The {\emph{$p$-cut complex}} of $\Sigma$, $HT_p(\Sigma)$, is the simplicial complex built as follows.

Each vertex of $HT_p(\Sigma)$ corresponds to a cut system for $\Sigma$ consisting of a collection of $g-p$ non-separating simple closed curves on $\Sigma$ and $2p+b-1$ arcs.

Each edge in $HT_p(\Sigma)$ is either type 0, type $0^\boundary$, or type 1. If $v,v'$ are vertices corresponding to cut systems $(\alpha_1, \alpha_2,...,\alpha_{g-p},a_1,\ldots,a_{2p+b-1})$ and $(\alpha'_1, \alpha'_2,...,\alpha'_{g-p},a'_1,\ldots,a'_{2p+b-1})$, respectively (where $\alpha_i,\alpha'_i$ are closed curves and $a_j,a'_j$ are arcs), then:
\begin{enumerate}
    \item There is a type 0 edge between $v$ and $v'$ if (up to reordering of either or both cut systems) $\alpha_i=\alpha'_i,a_j=a'_j$ for $i>1$ and all $j$ and $\alpha_1\cap\alpha'_1=\emptyset$.
    \item There is a type $0^\boundary$ edge between $v$ and $v'$ if (up to reordering either or both cut systems) $\alpha_i=\alpha'_i,a_j=a'_j$ for all $i$ and all $j>1$ and $a_1,a'_1$ are disjoint in their interiors. We require $\boundary a_1=\boundary a'_1$, and near the two boundary points of $a_1$ a normal framing to $a_1$ must either point toward or away $a'_1$ (i.e. not toward $a'_1$ near one boundary and away at the other).
    \item There is a type 1 edge between $v$ and $v'$ if (up to reordering of either or both cut systems) $\alpha_i=\alpha'_i,a_j=a'_j$ for $i>1$ and all $j$ and $\alpha_1$ intersects $\alpha'_1$ transversely in a single point.
\end{enumerate}
Note that two vertices whose arcs have different endpoints reside in different connected components of $HT_p(\Sigma)$, since vertices connected by edges in $HT_p(\Sigma)$ necessarily correspond to cut systems whose arcs have common boundary.
\end{definition}

Let $\D=(\Sigma;\alpha,\beta,\gamma)$ be a relative trisection diagram of relative trisection $\T$. By the definition of a relative trisection, one can find a sequence of handle slides (i.e. type 0 moves) of each pair of $\{\alpha,\beta,\gamma\}$ so that they become standard, i.e. homeomorphic to the curves in Figure~\ref{F:standard}. (Note that we do {\emph{not}} claim that the three pairs can be made {\emph{simultaneously}} standard; rather, any pair can be made standard while ignoring the third set of curves.)

Choose $2p+b-1$-tuples of arcs $A_\alpha,A_\beta,A_\gamma$ so that $(\Sigma;\alpha,\beta,\gamma;A_\alpha,A_\beta,A_\gamma)$ is an arced relative trisection diagram $\D_A$. 
Let $v_\alpha,v_\beta,v_\gamma$ be the vertices of $HT_p(\Sigma)$ corresponding to cut systems $\alpha\cup A_\alpha, \beta\cup A_\beta,\gamma\cup A_\gamma$, respectively. Let $\Gamma_\alpha$ be the set of all vertices in $HT_p(\Sigma)$ which are connected to $v_\alpha$ by a path consisting of only type 0 and type 0$^\boundary$ edges. Similarly define $\Gamma_\beta$ and $\Gamma_\gamma$. 

We call a pair of vertices $\alpha_\beta \in \Gamma_\alpha$ and $\beta_\alpha \in \Gamma_\beta$ {\emph{good}} if they are connected by a path of exactly $g+p+b-k-1$ type 1 edges. Note that this is the number of dual $\alpha$ and $\beta$ curves in the two cut systems, so that this is the minimum possible number of type 1 edges we could hope to find in a path between $\alpha_\beta$ and $\beta_\alpha$.

\begin{definition}\label{def:relvalid}
We say a path $\delta$ in $HT_p(\Sigma)$ is {\emph{valid with respect to $\T$}} if $\delta$ includes vertices $v_1,v_2,v_3,v_4,v_5,v_6,v_7$ in order (if distinct) with $\delta$ beginning at $v_1$ and ending at $v_7$ so that:
\begin{enumerate}
    \item The segment of $C$ between $v_1,v_2$ (inclusive) lies in $\Gamma_\alpha$,
    \item The segment of $C$ between $v_3,v_4$ (inclusive) lies in $\Gamma_\beta$,
    \item The segment of $C$ between $v_5,v_6$ (inclusive) lies in $\Gamma_\gamma$,
    \item We have $v_7\in \Gamma_\alpha$ and the closed curves of the cut systems corresponding to $v_1$ and $v_7$ agree,
    \item The cut systems associated to the pairs $(v_2,v_3), (v_4,v_5), (v_6,v_7)$ are all good pairs and the edges in $\delta$ between these pairs consist of type 1 edges.
\end{enumerate}

We say that $\delta$ is {\emph{valid with respect to $\D_A$}} or that $\D_A$ {\emph{represents}} $\delta$ if $v_1$, $v_3$, and $v_5$ correspond to $\alpha\cup A_\alpha, \beta\cup A_\beta$, and $\gamma\cup A_\gamma$, respectively. We say that $\delta$ is {\emph{valid with respect to $\D$}} if $\delta$ is valid with respect to any arced relative trisection diagram extending $\D$. We note that not every arced trisection diagram has a valid path, as not every cut system is part of a good pair. However, the following proposition shows that any relative trisection diagram (without arcs) admits a valid path.

\end{definition}

\begin{proposition}\label{prop:validpath}
Let $\mathcal{D}=(\Sigma;\alpha,\beta,\gamma)$ be a relative trisection diagram. For any choices of arcs $A_{\alpha}$ for $\alpha$, there exist arcs $A_\beta,A_\gamma$ so that $\mathcal{D}_A:=(\Sigma;\alpha,\beta,\gamma;A_\alpha,A_\beta,A_\gamma)$ is an arced relative trisection diagram and so that there exists a path $\delta$ in $HT_p(\Sigma)$ that is valid with respect to $\mathcal{D}_A$.
\end{proposition}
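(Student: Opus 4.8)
The plan is to construct the valid path $\delta$ explicitly by mimicking the monodromy algorithm of Section~\ref{sec:monodromy}, using the fact that any pair of the collections $\alpha,\beta,\gamma$ can be slid to standard position while ignoring the third. First I would start at the vertex $v_1 = v_\alpha$ corresponding to $\alpha \cup A_\alpha$. Since $(\Sigma;\alpha,\beta)$ can be made standard after handleslides and a diffeomorphism, I would apply the handleslides of $\alpha$ over $\alpha$ (type $0$ edges) and slides of the arcs $A_\alpha$ over $\alpha$ curves (also type $0$ edges, staying in $\Gamma_\alpha$), together with slides of $\beta$ over $\beta$, exactly as in Step~1 of the monodromy algorithm, until $\alpha$ (now some $\alpha^*$) and $\beta$ are in standard position as in Figure~\ref{F:standard}, and $A_\alpha$ has become a set of arcs $A_\beta$ disjoint from the standardized $\beta$ curves. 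This brings us to a vertex $v_2 = \alpha^* \cup A_\beta$ in $\Gamma_\alpha$. Now, because the standard diagram exhibits the pair $(\alpha^*,\beta)$ as a genus-$g$ Heegaard-type picture for $\#_{k_2}S^1\times S^2$, one can pass from the cut system $\alpha^* \cup A_\beta$ to $\beta \cup A_\beta$ using exactly $g+p+b-k-1$ type $1$ edges (changing one dual curve at a time, keeping the arcs and the unchanged closed curves fixed), ending at $v_3 = \beta \cup A_\beta$ — this is precisely the condition that $(v_2,v_3)$ is a good pair.

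Next I would repeat this procedure twice more, cyclically. From $v_3 = v_4 = \beta \cup A_\beta$ (taking $v_3 = v_4$), apply Step~2 of the monodromy algorithm: slides of $\beta$ over $\beta$ and $A_\beta$ over $\beta$ (type $0$/type $0^\boundary$ edges in $\Gamma_\beta$) and slides of $\gamma$ over $\gamma$, until $\beta$ and $\gamma$ are standard and $A_\beta$ has become $A_\gamma$ disjoint from $\gamma$; this reaches $v_5 \in \Gamma_\beta$ represented by (slid) $\beta$-curves together with $A_\gamma$. Then traverse $g+p+b-k-1$ type $1$ edges from this vertex to $v_6 = \gamma \cup A_\gamma \in \Gamma_\gamma$, making $(v_?, v_?)$ — I need to be careful with the indexing here, so let me re-align to the definition: I set $v_4 = \beta\cup A_\beta$, run the type $0$ moves to a vertex $w \in \Gamma_\beta$, then run type $1$ edges to $v_5 = \gamma\cup A_\gamma$, so $(v_4,v_5)$ is the good pair; then $v_5 = v_6 = \gamma\cup A_\gamma$, and finally run Steps~3 and~4 of the monodromy algorithm (slides of $\gamma$ over $\alpha$, then of the resulting $\alpha'$ back to the original $\alpha$, all type $0$/$0^\boundary$, staying in $\Gamma_\gamma$) to reach a vertex $w' \in \Gamma_\gamma$ whose closed curves are the original $\alpha$ curves and whose arcs are $\overline{A} = \phi(A_\alpha)$; then traverse $g+p+b-k-1$ type $1$ edges from $w'$ to $v_7 = \alpha \cup \overline A$, so $(v_6,v_7)$ is a good pair, and condition (4) holds because the closed curves of $v_1 = \alpha\cup A_\alpha$ and $v_7 = \alpha\cup\overline A$ agree. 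Concatenating all of these segments yields a path $\delta$ in $HT_p(\Sigma)$, and I would define $A_\beta, A_\gamma$ to be exactly the arc collections produced above, so that $\mathcal{D}_A := (\Sigma;\alpha,\beta,\gamma;A_\alpha,A_\beta,A_\gamma)$ is an arced relative trisection diagram representing $\delta$. Checking validity then amounts to reading off conditions (1)–(5) from the construction.

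The main obstacle I anticipate is the following subtlety: in the monodromy algorithm, Steps~1–4 involve slides of arcs over closed curves, and I must verify that every such arc slide is realized by a type $0^\boundary$ edge in $HT_p(\Sigma)$ (or a sequence of type $0$ and type $0^\boundary$ edges), in particular that the orientation/framing condition in Definition~\ref{def:HTpComplex}(2) can always be met — a handleslide of an arc $a$ over a curve does not obviously change only one arc at a time nor respect the "toward/away" framing restriction, so some care (possibly inserting auxiliary type $0$ moves, or observing that any two cut systems related by arc-slides are connected in $\Gamma_*$) is required. A second, related point is ensuring that after standardizing $(\alpha,\beta)$ we genuinely have a good pair: we must check that standardization can be done so that the arcs $A_\beta$ lie in the complementary disk of the standard picture and do not obstruct the $g+p+b-k-1$ type $1$ edges; this should follow from Remark~\ref{R:1}(3) (one can choose initial arcs disjoint from a standard $\alpha,\beta$ pair). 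Once these two points are handled, the rest is bookkeeping with the indexing of $v_1,\dots,v_7$ against Definition~\ref{def:relvalid}.
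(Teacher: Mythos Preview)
Your approach is essentially the same as the paper's: construct $\delta$ by following the monodromy algorithm, sliding each pair of curve collections to standard form (type $0$/$0^\boundary$ edges) and then swapping the dual curves one at a time (type $1$ edges), defining $A_\beta$ and $A_\gamma$ to be whatever arc systems arise along the way. The paper's proof is in fact terser than yours and does not pause over either of the obstacles you flag; your worries about the framing condition for $0^\boundary$ edges and about arranging the arcs to witness a good pair are easily dispatched (an arc slid over a closed curve can be pushed off to the same side at both endpoints, and the standard picture of Figure~\ref{F:standard} makes the $g+p+b-k-1$ type $1$ swaps manifest), so your caution there is warranted but not an actual gap. Your indexing of $v_1,\dots,v_7$ does get tangled in the middle paragraph---in particular the segment from $\Gamma_\gamma$ back to $\Gamma_\alpha$ should have the type $1$ edges \emph{between} the $\gamma$-slides and the $\alpha$-slides, not after both---but once you straighten that out the argument goes through exactly as in the paper.
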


\begin{proof}
We describe $\delta$ as a sequence of slides and type 1 moves.

Let $A_\alpha$ be any set of arcs $\Sigma\setminus \alpha$ so that $\alpha\cup A_\alpha$ are a cut system for $\Sigma$. By the definition of a relative trisection diagram, there exist curves $\alpha'$ and $\beta'$ slide-equivalent to $\alpha$ and $\beta$ (respectively) so that $(\alpha',\beta')$ are a good pair. Perform slides on $\alpha,A_\alpha$ to transform $\alpha$ into $\alpha'$ (this may require us to perform some slides of $A_\alpha$ arcs, turning them into arcs $A_{\alpha'}$). Then do $g+p+b-k-1$ type 1 moves to replace $\alpha'$ with $\beta'$. Perform slides on $\beta',A_\beta'$ to transform $\beta'$ into $\beta$ and $A_\beta'$ into arcs $A_{\beta}$.

Repeat the above procedure with $\beta,\gamma$ taking the roles of $\alpha,\beta$. That is, slide $\beta$ and $A_{\beta}$, do $g+p+b-k-1$ type 1 moves, and perform more slides to obtain $\gamma$ and arcs $A_{\gamma}$. Finally, repeat this procedure one more time with $\gamma,\alpha$: slide $\gamma;A_{\gamma}$, do $g+p+b-k-1$ type 1 moves, and perform more slides to obtain $\alpha$ and arcs $A_{\overline{\alpha}}$.

This sequence of moves describes a path $\delta$ in $HT_p(\Sigma)$ that is valid with respect to $(\Sigma;\alpha,\beta,\gamma;A_\alpha,A_\beta,A_\gamma)$.

\end{proof}

In Definition~\ref{def:relvalid}, we label the distinguished vertices with numbers rather than $\alpha,\beta,\gamma$ (as in Definition~\ref{def:unnormalizedL}) to avoid giving the impression that the path $\delta$ is a closed loop. In Definition~\ref{def:relvalid}, we generally cannot hope for $\delta$ to be a closed loop, as the arcs corresponding to $v_7$ should differ from those corresponding to $v_1$ by an application of the monodromy of $\O_\D$.

\begin{remark}
Note that a path $\delta$ which is valid with respect to $\D_A$ roughly corresponds to performing the monodromy algorithm of~\cite{cgp} as described in Section~\ref{sec:monodromy}, as we begin with a cut system (choice of arcs) for the $\alpha$-page $\Sigma_\alpha$ of $\O_\D$ and by sliding the arcs obtain cut systems for $\Sigma_\beta$, $\Sigma_\gamma$, and  $\Sigma_\alpha$ again. This algorithm is the primary motivation for the definition of a valid path. 
\end{remark}

\begin{definition}\label{def:rL}
We define the {\emph{relative $\L$-invariant}} of a relative trisection diagram to be $$\rL(\D)=\min\{|\delta|\mid \delta\text{ valid with respect to $\D$ }\}-3(g+p+b-1)+(k_1+k_2+k_3).\label{Leq}$$ We define the relative $\L$-invariant of a relative trisection $\T$ to be $$\rL(\T)=\min\{\rL(\D)\mid\D\text{ is a relative trisection diagram for $\T$}\}.$$ Similarly, we define the relative $\L$-invariant of a bounded $4$-manifold $X$ to be $$\rL(X)=\min\{\rL(\T)\mid\T\text{ is a relative trisection of $X$}\}.$$
Proposition~\ref{prop:validpath} ensures that these quantities are all well-defined (i.e. that we are not taking the minimum value of an empty set).
\end{definition}
When $\rL(X)=\rL(\D_A)=|\delta|-3(g+p+b-1)+(k_1+k_2+k_3)$ for some path $\delta$ representing arced relative trisection diagram $\D_A$, we say that $(\D_A,\delta)$ {\emph{achieve}} $\rL(X)$, as a convenient shorthand.

In Definition~\eqref{Leq}, $|\delta|$ refers to the length (number of edges) in the path $\delta$. The constant $3(g+p+b-1)-(k_1+k_2+k_3)$ is the minimum number of type 1 edges which must be in $\delta$ for algebraic reasons; note that up to slides the pair $\alpha,\beta$ consist of $k_1-2p-b+1$ pairs of parallel curves and $g+p+b-k_1-1$ pairs of dual curves (and similarly for the pairs $\beta,\gamma$ and $\gamma,\alpha$). As in Section~\ref{sec:closedL}, this normalization ensures that interior stabilization does not increase $\rL(\T)$.

\begin{remark}\label{rem:empty}
Let $\T$ be the $(0,0;0,1)$-relative trisection of $B^4$. Note that $HT_0(D^2)$ is the empty complex. The empty path $\delta$ is valid with respect to $\T$. Moreover, $|\delta|-3(0+0+1-1)+(0+0+0)=0$, so we conclude that $\rL(\T)=0$, so $\rL(B^4)=0$.
\end{remark}

\begin{proposition}\label{prop:relintstab}
Let $\T$ be a $(g,k;p,b)$-relative trisection of $X^4$. Let $\widetilde{T}$ be a $(g+1,\tilde{k};p,b)$-relative trisection obtained from $\T$ by one interior stabilization. Then $\rL(\widetilde{T})\le \rL(\T)$.
\end{proposition}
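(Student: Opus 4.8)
The plan is to mimic the computation done in Section~\ref{sec:closedL} for the closed $\L$-invariant under stabilization, adapting it to the relative setting. First I would fix a relative trisection diagram $\D=(\Sigma;\alpha,\beta,\gamma)$ of $\T$ and an arced extension $\D_A=(\Sigma;\alpha,\beta,\gamma;A_\alpha,A_\beta,A_\gamma)$ together with a valid path $\delta$ in $HT_p(\Sigma)$ that achieves $\rL(\D)$. Since interior stabilization is a connected sum with a $(1,k)$-trisection $\T_1$ of $S^4$ (with $\{k_1,k_2,k_3\}=\{0,0,1\}$) performed at a point of the triple intersection surface, the new triple intersection surface is $\Sigma\#\Sigma_1$ where $\Sigma_1$ is a closed genus-$1$ surface; in particular the number of boundary components $b$ and the genus-$p$ pages are unchanged, and the new trisection is a $(g+1,\widetilde k;p,b)$-relative trisection with $\widetilde k = k + e_i$ for the appropriate standard basis vector $e_i$, so $\widetilde k_1+\widetilde k_2+\widetilde k_3 = k_1+k_2+k_3+1$.

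Next I would build a valid path $\widetilde\delta$ in $HT_p(\Sigma\#\Sigma_1)$ with respect to $\widetilde\T$ out of $\delta$ together with the short path $\delta_1$ of length $2$ that is valid with respect to $\T_1$ (as in Figure~\ref{F:S4genus1}). The key point is that every cut system appearing in $\widetilde\delta$ should be a disjoint union of a cut system for $\Sigma$ (with its arcs $A_\alpha$-type piece) and a cut system of closed curves for the genus-$1$ surface $\Sigma_1$; the arcs live entirely in the $\Sigma$-summand and are never touched by the $\delta_1$-part, while the single extra closed curve is handled by $\delta_1$. Concatenating as in the closed case --- first the $\Gamma_\alpha$-segment of $\delta$, then the $\Gamma_{\alpha}$-segment of $\delta_1$, then the good-pair type 1 edges of $\delta$ (extended by the type 1 edges of $\delta_1$ across the new curve so that the pair is still good in the stabilized surface), and so on cyclically --- produces a path $\widetilde\delta$ valid with respect to $\widetilde\T$ with $|\widetilde\delta| \le |\delta| + |\delta_1| = |\delta| + 2$. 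I would need to check carefully that the good-pair condition is preserved: a good pair for $\T$ is connected by exactly $g+p+b-k_i-1$ type 1 edges, and after inserting the two type 1 edges of $\delta_1$ at the appropriate stage we get exactly $(g+1)+p+b-\widetilde k_i-1$ type 1 edges, since $\widetilde k_i$ increases by $1$ at exactly one of the three stages and stays the same at the other two (where instead $g$ increases by $1$, again matching because $\delta_1$ contributes type 1 edges at those stages too). This bookkeeping --- making sure the count of type 1 edges in $\widetilde\delta$ between the distinguished vertices is exactly right at each of the three good-pair junctures, given the asymmetric way $\widetilde k$ differs from $k$ --- is the part I expect to require the most care, and it is essentially the same subtlety that appears in the closed-case computation after Definition~\ref{def:unnormalizedL}.

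Finally I would run the normalization arithmetic. From $|\widetilde\delta| \le |\delta| + 2$ and the facts that $\widetilde\T$ has genus $g+1$ (while $p,b$ are unchanged) and $\widetilde k_1+\widetilde k_2+\widetilde k_3 = k_1+k_2+k_3+1$, we get
\begin{align*}
\rL(\widetilde\T) &\le |\widetilde\delta| - 3((g+1)+p+b-1) + (\widetilde k_1+\widetilde k_2+\widetilde k_3)\\
&\le |\delta| + 2 - 3(g+p+b-1) - 3 + (k_1+k_2+k_3) + 1\\
&= |\delta| - 3(g+p+b-1) + (k_1+k_2+k_3) = \rL(\D) = \rL(\T),
\end{align*}
which is the desired inequality (and in fact this shows $\rL(\widetilde\T)\le\rL(\D)$ for the chosen diagram, hence $\rL(\widetilde\T)\le\rL(\T)$ after minimizing, though one should note $\widetilde\T$ is a specific stabilization so no minimization over diagrams of $\widetilde\T$ is even needed for the stated inequality). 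The only genuine obstacle is verifying that the spliced path $\widetilde\delta$ is honestly valid with respect to $\widetilde\T$ --- in particular that the three subpaths of $\delta$ still lie in $\Gamma_\alpha,\Gamma_\beta,\Gamma_\gamma$ for the \emph{stabilized} trisection (which holds because type 0 and type $0^\boundary$ moves in the $\Sigma$-summand commute with the disjoint-union structure) and that the type 1 junctions are good --- everything else is the routine normalization above.
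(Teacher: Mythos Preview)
Your proposal is correct and follows essentially the same approach as the paper: fix an arced diagram and valid path achieving $\rL(\T)$, splice it with the length-$2$ path for the genus-$1$ trisection of $S^4$ from Figure~\ref{F:S4genus1}, and run the normalization arithmetic. You are in fact more careful than the paper about verifying that the good-pair type~1 edge counts match up after stabilization; the paper simply asserts the spliced path is valid. One small slip: you begin by choosing $\delta$ to achieve $\rL(\D)$ rather than $\rL(\T)$, so the final equality $\rL(\D)=\rL(\T)$ requires having chosen $\D$ minimally---but you immediately note this and fix it by minimizing, so there is no real gap.
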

\begin{proof}
Let $\D_A=(\Sigma;\alpha,\beta,\gamma;A_\alpha,A_\beta,A_\gamma)$ be an arced relative trisection diagram for $\T$. Let  $D'=(\Sigma',\alpha',\beta',\gamma')$ be a trisection diagram of a $(1,k)$-trisection $\T'$ of $S^4$. 

Let $\delta$ be a path in $HT_p(\Sigma)$ which is valid with respect to $\D_A$. Choose $\delta$ and $\D_A$ so that $(\D_A,\delta)$ achieves $\rL(\T)$ (i.e. $\rL(\T)=|\delta|-3(g+p+b-1)+(k_1+k_2+k_3)$). Let $v_1,v_2,\ldots, v_7$ be the distinguished vertices of $\delta$ as in Definition~\ref{def:rL}. Recall that $v_1$, $v_3$, $v_5$ correspond to the cut systems $\alpha\cup A_\alpha, \beta\cup A_\beta, \gamma\cup A_\gamma$.

Let $\delta'$ be a loop in $HT(\Sigma')$ which is valid with respect to $\D'$. Take $\delta'$ specifically to be the length-2 loop implicitly described in Figure~\ref{F:S4genus1}; assume $k_1=1,k_2=k_3=0$ (up to reordering $\alpha',\beta',\gamma'$). Say the vertices of $\delta'$ are $v'_1,v'_2,v'_3$, where so $v'_1,v'_3$ correspond to $\alpha'$. Let $v'_\beta$ be the first of $v'_1,v'_2,v'_3$ to correspond to $\beta'$ and $v'_\gamma$ the first to correspond to $\gamma'$.

Then we may find a loop in $HT_p(\Sigma\#\Sigma')$ valid with respect to $\T\#\T'$ 
in which each vertex splits into the disjoint union of a cut system for $\Sigma$ and a cut system for $\Sigma'$. We start at the vertex $v_1\sqcup v'_1$, by which we mean the vertex whose cut system corresponds to the union of the cut systems for $v_1$ and $v'_1$. We then add edges corresponding to those of $\delta$ between $v_1$ and $v_2$ (if any), followed by edges corresponding to those of $\delta'$ between $v'_1$ and $v_\beta$ (if any). Then we add edges corresponding to those of $\delta$ between $v_2$ and $v_3$, and so on, ending with an edge corresponding to the segment of $\delta'$ between $v_\gamma$ and $v_3$ (if nonempty). This path $\tilde{\delta}$ has length the sum of the lengths of $\delta$ and $\delta'$, namely $|\delta|+2$. Since $\D\#\D'$ is a relative trisection diagram for $\widetilde{T}$, we thus conclude
\begin{align*}
    \rL(\widetilde{\T})&\le|\tilde{\delta}|-3((g+1)+p+b-1)+\Sigma \tilde{k}_i\\&= (|\delta|+2)-3(g+p+b-1)-3+(\Sigma k_i+1)\\
    &=|\delta|-3(g+p+b-1)+\Sigma k_i\\
    &=\rL(\T).
\end{align*}
    
\end{proof}





\begin{definition}
The \emph{boundary complexity} of the relative trisection diagram $\mathcal{D}$ is the non-negative integer
	$$\rL^\partial(\mathcal{D})=\min\{\#\text{of type 0$^{\boundary}$ edges in $\delta\mid\delta$ valid with respect to $\D$}\}.$$
	
	The {\emph{boundary complexity}} of the $4$-manifold $X$ is the non-negative integer \[\rL^\partial(X)=\min\{\rL^\partial(\mathcal{D})\mid\D\text{ is a relative trisection diagram for }X\}.\]

\end{definition}
When $\rL^\boundary(X)=\rL^\boundary(\D_A)=\#\text{type 0$^{\boundary}$ edges in }\delta$ for some path $\delta$ representing arced relative trisection diagram $\D_A$, we say that $(\D_A,\delta)$ {\emph{achieves}} $\rL^\boundary(X)$, as a convenient shorthand. When $(\D_A,\delta)$ achieves $\rL^\boundary(X)$, we need not expect the pair also achieves $\rL(X)$.

\begin{proposition}\label{prop:bdrys3}
If $\boundary X\cong S^3$, then $\rL^\partial(X)=0$.
\end{proposition}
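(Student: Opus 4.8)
The plan is to reduce to the case of a single, very concrete relative trisection of $B^4$, since $\partial X \cong S^3$ means $X$ is a homotopy ball, but actually we can do better: we do not need to identify $X$, we only need one relative trisection diagram $\D$ of $X$ and one valid path $\delta$ with respect to $\D$ that uses zero type $0^\partial$ edges. So the strategy is to produce such a $(\D,\delta)$ directly. First I would recall that any open book decomposition of $\partial X \cong S^3$ is obtained from the trivial open book on $S^3$ (disk page, identity monodromy) by a sequence of positive and negative Hopf stabilizations, by Giroux's theorem (or rather the relevant special case), together with the Gay--Kirby existence theorem, which says that $X$ admits \emph{some} relative trisection $\T_0$ with $\O_{\T_0}$ equal to the trivial open book on $S^3$.

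The key point is that for such a $\T_0$, the induced open book on $\partial X$ has genus-$p$ page with $p=0$ and $b=1$, i.e. a disk page. When $p=0$ and $b=1$, the ``arc'' part of every cut system in $HT_0(\Sigma)$ is empty: $2p+b-1 = 0$, so there are no arcs, and the $0$-cut complex $HT_0(\Sigma)$ coincides exactly with the ordinary cut complex $HT(\Sigma)$ of the closed-up surface — in particular it has \emph{no} type $0^\partial$ edges at all, since those edges move arcs and there are no arcs. Then I would invoke Proposition~\ref{prop:validpath}: there is an arced relative trisection diagram $\D_A$ extending a diagram $\D$ of $\T_0$ and a path $\delta$ valid with respect to $\D_A$; since $HT_0(\Sigma)$ has no type $0^\partial$ edges, $\delta$ automatically contains zero of them. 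Hence $\rL^\partial(\D) = 0$, and therefore $\rL^\partial(X) = \min_{\D}\rL^\partial(\D) = 0$ since the quantity is a minimum of non-negative integers and we have exhibited a diagram achieving $0$.

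The main obstacle — really the only thing requiring care — is making sure that $X$ admits a relative trisection whose induced open book is the \emph{disk-page} (trivial) open book on $S^3$, so that $p=0,b=1$. This follows immediately from the Gay--Kirby existence theorem quoted in the excerpt (``Given any open book decomposition $\O$ of $\partial X$ there is a relative trisection $\tri$ of $X$ which induces the open book $\O$''), applied with $\O$ the trivial open book on $S^3$. So I would write: choose $\O$ to be the open book on $S^3$ with disk page and trivial monodromy; let $\T_0$ be a relative trisection of $X$ inducing $\O$ (Gay--Kirby); then $\T_0$ is a $(g,k;0,1)$-relative trisection for some $g,k$; apply Proposition~\ref{prop:validpath} to get $(\D_A,\delta)$ with $\delta$ valid with respect to $\D_A$; observe $2p+b-1 = 0$ forces all cut systems to be arcless and hence $\delta$ has no type $0^\partial$ edges; conclude $\rL^\partial(X)\le\rL^\partial(\T_0)\le\rL^\partial(\D)=0$, and since $\rL^\partial(X)\ge 0$ by definition, $\rL^\partial(X)=0$. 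I expect the whole argument to be about half a page, with the only subtlety being to state clearly why ``$p=0$, $b=1$'' kills all the type $0^\partial$ edges combinatorially rather than by some path-length estimate.
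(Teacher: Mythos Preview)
Your proof is correct and essentially identical to the paper's: the paper also cites the existence of a $(g,k;0,1)$-relative trisection of $X$ (attributed to~\cite{nickthesis} rather than Gay--Kirby) and then observes that with $2p+b-1=0$ arcs there are no type $0^\partial$ edges in $HT_0(\Sigma)$, so $\rL^\partial(\T)=0$. Your explicit invocation of Proposition~\ref{prop:validpath} and the aside about Giroux/Hopf stabilizations are harmless extras; the core argument is the same.
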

\begin{proof}
By work of the first author~\cite{nickthesis}, there is a $(g,k;0,1)$-relative trisection $\T$ of $X$. A cut system of any diagram $\D=(\Sigma;\alpha,\beta,\gamma)$ of $\T$ is comprised only of closed curves. That is, $HT_p(\Sigma)$ does not include any type 0$^\boundary$ edges. Thus, $\rL^\partial(\T)=0$.
\end{proof}
We will later see a converse to this in Proposition~\ref{L:0not1}. That is, if $rL^\partial(X)=0$, then $\boundary X\cong\#_n(S^1\times S^2)$ for some $n\ge 0$.

\begin{definition}
The \emph{interior complexity} of the $(g,k;p,b)$-relative trisection diagram $\mathcal{D}$ is the non-negative integer
	\begin{align*}
	    \rL^\circ(\mathcal{D})&=\min\{|\delta|-\#(\text{type 0$^\boundary$ edges in $\delta$})\mid\delta\text{ valid with respect to }\D\}\\ &\hphantom{\hspace{1in}}-3(g+p+b-1)+(k_1+k_2+k_3).
	    \end{align*}
	The {\emph{interior complexity}} of the $4$-manifold $X$ is the non-negative integer \[\rL^\circ(X)=\min\{\rL^\circ(\mathcal{D})\mid\D\text{ is a relative trisection diagram for }X\}.\]\
\end{definition}

We note that if a pair $(\D_A,\delta)$ achieves $\rL(X)$, then the valid path $\delta$ consists of edges which count  for either $\rL^\circ(X)$ or for $\rL^\partial(X)$. We therefore immediately obtain the inequality $\rL(X) \leq \rL^\circ(X)+\rL^\partial(X)$. On the other hand, it is plausible that the path realizing $\rL(X)$ minimizes neither $rL^\circ(X)$ nor $\rL^\partial(X)$, motivating the following question.

\begin{question}\label{separatequestion}
Given a smooth, compact $4$-manifold $X$ with connected, nonempty boundary, must $\rL(X)=\rL^\circ(X)+\rL^\partial(X)$?
\end{question}

\begin{remark}
It is simple to see that $$\rL(X)=\rL(-X),\quad\rL^\boundary(X)=\rL^\boundary(-X),\quad\rL^\circ(X)=\rL^\circ(-X).$$ This holds because if $\D=(\Sigma;\alpha,\beta,\gamma)$ is a relative trisection diagram for $X$, then $-\D=(\Sigma,\gamma,\beta,\alpha)$ is a relative trisection diagram for $-X$. If $\delta\in HT_p(\Sigma)$ is a valid path with respect to $\D$, then the reverse of $\delta$ is a valid path in $HT_p(\Sigma)$ for $-\D$.
\end{remark}

\begin{proposition}
Let $\T'$ be obtained from $(g,k;p,b)$-relative trisection $\T$ by interior stabilization. 
Then:
$$\rL(\T')\le\rL(\T),\quad\rL^\boundary(\T')\le\rL^\boundary(\T),\quad\rL^\circ(\T')\le\rL^\circ(\T).$$
\end{proposition}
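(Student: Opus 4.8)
The plan is to adapt the argument already given for $\rL$ in Proposition~\ref{prop:relintstab} so that it simultaneously controls all three quantities. The key observation is that the explicit path $\tilde\delta$ in $HT_p(\Sigma\#\Sigma')$ constructed there is built by \emph{interleaving} the edges of a valid path $\delta$ for $\D_A$ with the two edges of the length-$2$ loop $\delta'$ for the genus-$1$ trisection of $S^4$, and that the two new edges contributed by $\delta'$ are \emph{type 1} edges (see Figure~\ref{F:S4genus1}), not type $0^\boundary$ edges. Hence $\tilde\delta$ has exactly the same number of type $0^\boundary$ edges as $\delta$, and its total length is $|\delta|+2$.

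First I would fix an arced relative trisection diagram $\D_A=(\Sigma;\alpha,\beta,\gamma;A_\alpha,A_\beta,A_\gamma)$ of $\T$ and a valid path $\delta$ with respect to $\D_A$, together with the length-$2$ valid loop $\delta'$ for $\T'$ exactly as in the proof of Proposition~\ref{prop:relintstab}, and form the interleaved path $\tilde\delta$ on $\Sigma\#\Sigma'$. Then I would record the two numerical facts: $|\tilde\delta| = |\delta| + 2$ and $\#(\text{type }0^\boundary\text{ edges in }\tilde\delta) = \#(\text{type }0^\boundary\text{ edges in }\delta)$. The second equality holds because every edge of $\tilde\delta$ is either (a copy of) an edge of $\delta$ — same type — or one of the two type $1$ edges coming from $\delta'$; the connect-sum construction does not change the type of an edge inherited from $\delta$ since it only takes the disjoint union of cut systems.

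From here the three inequalities follow by the same normalization bookkeeping as before. For $\rL$, this is literally the computation in Proposition~\ref{prop:relintstab}. For $\rL^\boundary$, choosing $\delta$ to achieve $\rL^\boundary(\T)$ gives a valid path $\tilde\delta$ for $\widetilde\T$ with the same number of type $0^\boundary$ edges, so $\rL^\boundary(\widetilde\T)\le \rL^\boundary(\T)$. For $\rL^\circ$, choosing $\delta$ to achieve $\rL^\circ(\T)$ and using $|\tilde\delta| - \#(\text{type }0^\boundary) = (|\delta|+2) - \#(\text{type }0^\boundary\text{ in }\delta)$, the same arithmetic that turns $+2$ in length into $0$ change in the normalized invariant (the genus goes up by $1$, contributing $-3$, and $\sum k_i$ goes up by $1$, contributing $+1$, exactly cancelling the $+2$) yields $\rL^\circ(\widetilde\T)\le \rL^\circ(\T)$.

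I do not anticipate a serious obstacle: the only thing to be careful about is confirming that the interleaving in Proposition~\ref{prop:relintstab} produces a path that is still \emph{valid} in the sense of Definition~\ref{def:relvalid} when one also tracks the $0^\boundary$/type-$1$ distinction — i.e. that the distinguished vertices $v_1,\dots,v_7$ of $\tilde\delta$ (namely $v_i \sqcup (\text{appropriate vertex of }\delta')$) still satisfy conditions (1)--(5), and in particular that the good pairs remain good after adding the $g+p+b-k-1$-versus-$(g+1)+p+b-(k+1)-1$ count correctly (these agree, so the good-pair condition is preserved). This is exactly the point already checked in Proposition~\ref{prop:relintstab}, so in the write-up I would simply cite that proof for the construction of $\tilde\delta$ and its validity, and then add the one-line remark that $\tilde\delta$ has the same number of type $0^\boundary$ edges as $\delta$, from which all three inequalities follow by the normalization computations above.
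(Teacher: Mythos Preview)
Your proposal is correct and follows essentially the same approach as the paper: cite the construction from Proposition~\ref{prop:relintstab}, observe that the two extra edges in $\tilde\delta$ are type~1 so the number of type~$0^\boundary$ edges is unchanged, and then read off all three inequalities from the normalization arithmetic. The paper's proof is a terse version of exactly this.
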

\begin{proof}
We showed in Proposition~\ref{prop:relintstab} that $\rL(\T')\le\rL(\T)$. We proved this by showing that for any diagram $\D$ of $\T$ and valid path $\delta$ with respect to $\D$, we may obtain a diagram $\D'$ of $\T'$ with valid path $\delta'$, where $|\delta'|=|\delta|+2$ and $\delta'$ has the same number of type 0 and type 0$^\boundary$ edges as does $\delta$ (the two "extra" edges are type 1). This immediately yields $\rL^\boundary(\T')\le\rL^\boundary(\T)$ and $\rL^\circ(\T')\le\rL^\circ(\T)$.
\end{proof}


\begin{remark}

If $\T'$ is obtained from $\T$ by a {\emph{relative}} stabilization or relative double twist, then it is possible that $\rL(\T')>\rL(\T)$. For example, let $\T$ have diagram $\D=(D^2,\emptyset,\emptyset,\emptyset)$, so $\T$ is the $(0,0;0,1)$-relative trisection of $B^4$. We have $\rL(\T)=0$ (see Remark \ref{rem:empty}). 
Let $\T'$ be the $(1,1;0,2)$-relative trisection of $B^4$, obtained from $\T$ by one relative stabilization. The open book $\O_{\T'}$ has binding the Hopf link. In Lemma~\ref{L:0not1}, we will show that if $\rL^\partial(T')=0$ then $\partial B^4\cong S^1\times S^2$, a contradiction. Therefore, $\rL(\T')\ge \rL^\partial(\T')>0=\rL(\T).$

As another example, let $\T''$ be a $(2,2;0,3)$-relative trisection of $B^4$ obtained from $\T$ by a relative double twist. Again by Lemma~\ref{L:0not1}, we must have $\rL^\partial(\T'')>0=\rL(\T)$. (In fact, by Theorem~\ref{thm:boundarysmall}, we must have $\rL^\partial(\T'')>3$, or else $\boundary B^4$ would admit an $S^1\times S^2$ summand.) Applying this theorem, we conclude that $\rL(\T'')=4>\rL(\T')=2>\rL(\T)=0.$)

Compare this to Proposition~\ref{prop:relintstab}, which shows that {\emph{interior}} stabilization cannot increase $\rL(\T)$.
\end{remark}

\begin{example}[Figures~\ref{F:wrinkling} and~\ref{F:lantern}]\label{example}
Consider the relative trisection diagram $\D=(\Sigma;\alpha,\beta,\gamma)$ shown in Figure~\ref{F:wrinkling} (B) obtained from the positive allowable Lefschetz fibration $f:W \rightarrow D^2$ of the 4-manifold $W\cong\overline{\CP}^2\setminus\mathring{B}^4$ with regular fiber the thrice punctured disk and vanishing cycles $a,b,c,d$, shown in Figure~\ref{F:wrinkling}(A). If we denote $\tau_a$ as the positive Dehn twist about the curve $a$, the induced open book is $(S_{0,4}, \phi)$ where $\phi=\tau_a\tau_b\tau_c\tau_d$. The steps in the monodromy algorithm are shown in Figure~\ref{F:lantern}; these steps describe a path $\delta$ in $HT_0(\Sigma)$ which is valid with respect to the $(4,3;0,4)$-relative trisection diagram $\D$.

The path $\delta$ consists of $12$ type 1 edges and $6$ type 0$^{\boundary}$ edges. We thus have $\rL(\D)\le18-3(4+0+4-1)+9=6$, $\rL^\boundary(\D)\le 6$, and $\rL^\circ(\D)=0$. We will see in Corollary~\ref{cor:exsharp} that $\rL(\D)=\rL^\boundary(\D)=6$ (using the fact that $\boundary W$ is a 3-sphere). On the other hand, by considering a $(1,0;0,1)$-relative trisection of $W$, we find $\rL(W)=\rL^\boundary(W)=0$.
\begin{figure}\centering
\captionsetup{width=\textwidth}
\subcaptionbox{The vanishing cycles of a Lefschetz fibration}{
	\labellist
		\pinlabel $a$ at 215 230
		\pinlabel $b$ at 190 100
		\pinlabel $c$ at 240 100
		\pinlabel $d$ at 55 277
	\endlabellist
\includegraphics[width=2in,height=2in]{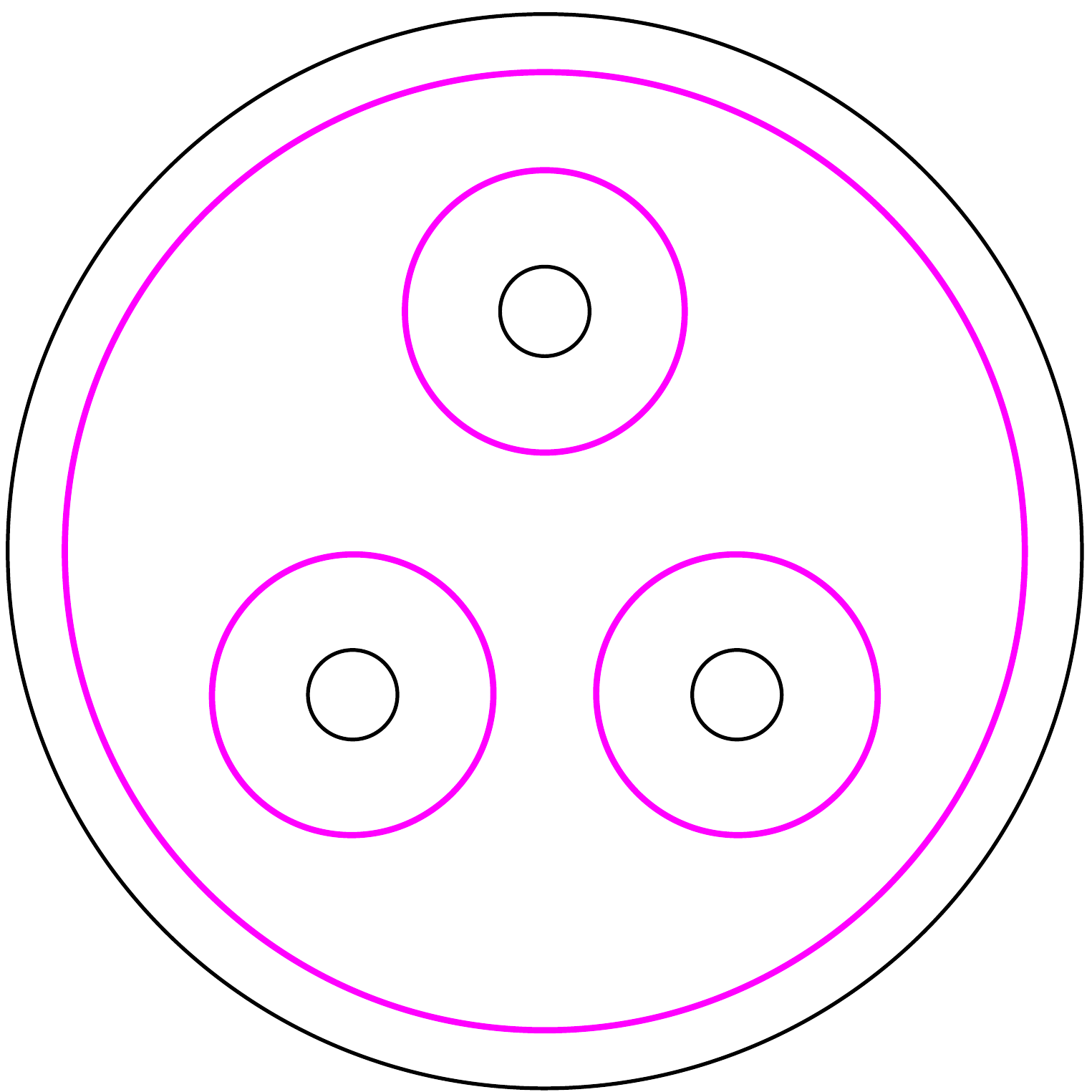}}
\hspace{2cm}
\subcaptionbox{Wrinkling the Lefschetz singularities to obtain a relative trisection diagram}{\includegraphics[width=2in,height=2in]{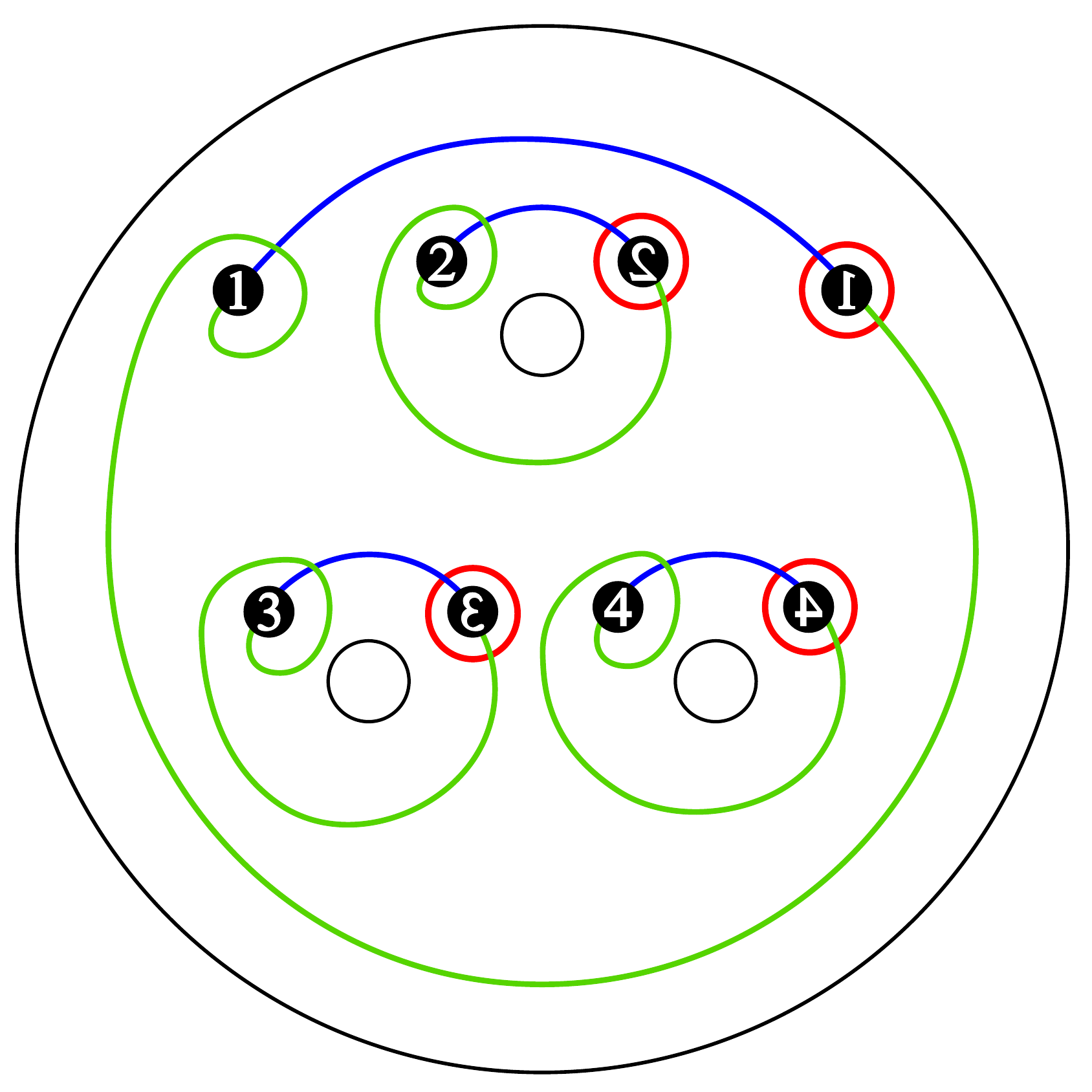}}
\caption{Wrinkling the Lefschetz singularities to obtain a relative trisection diagram.}
\label{F:wrinkling}
\end{figure}
\begin{figure}\centering
\captionsetup{width=\textwidth}
\subcaptionbox{}
{\includegraphics[width=2in,height=2in]{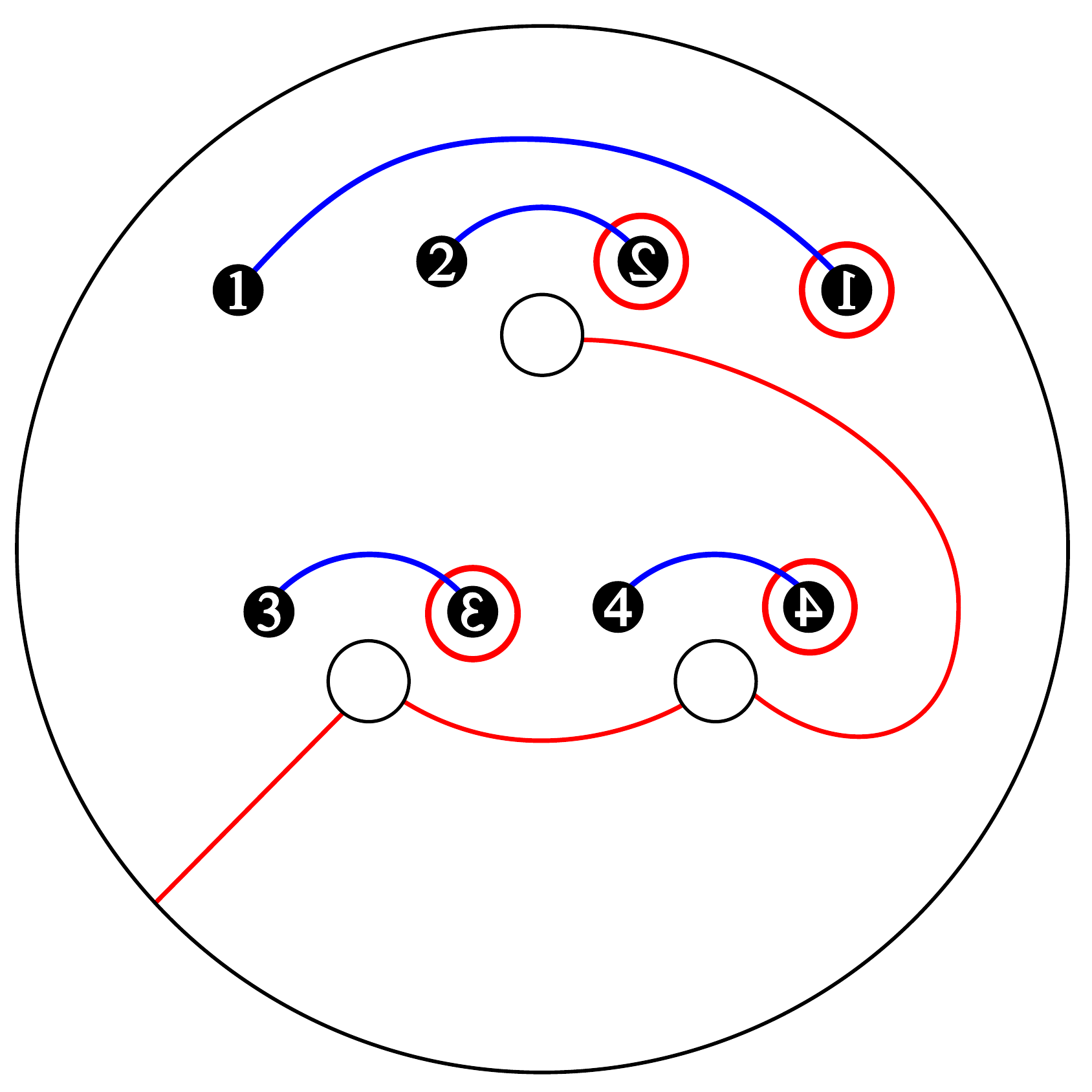}}%
\hspace{2cm}
\subcaptionbox{}
{\includegraphics[width=2in,height=2in]{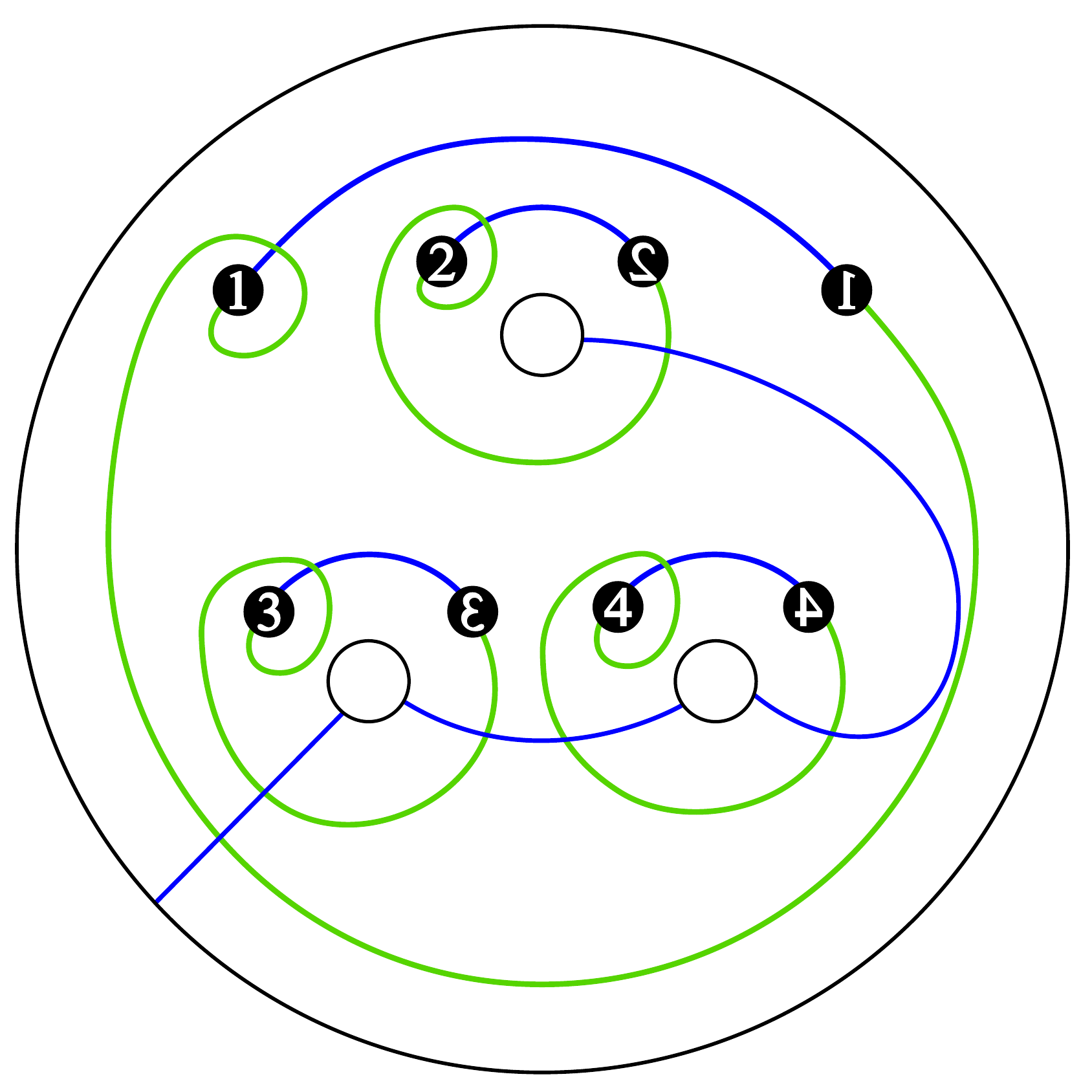}}%

\subcaptionbox{}
{\includegraphics[width=2in,height=2in]{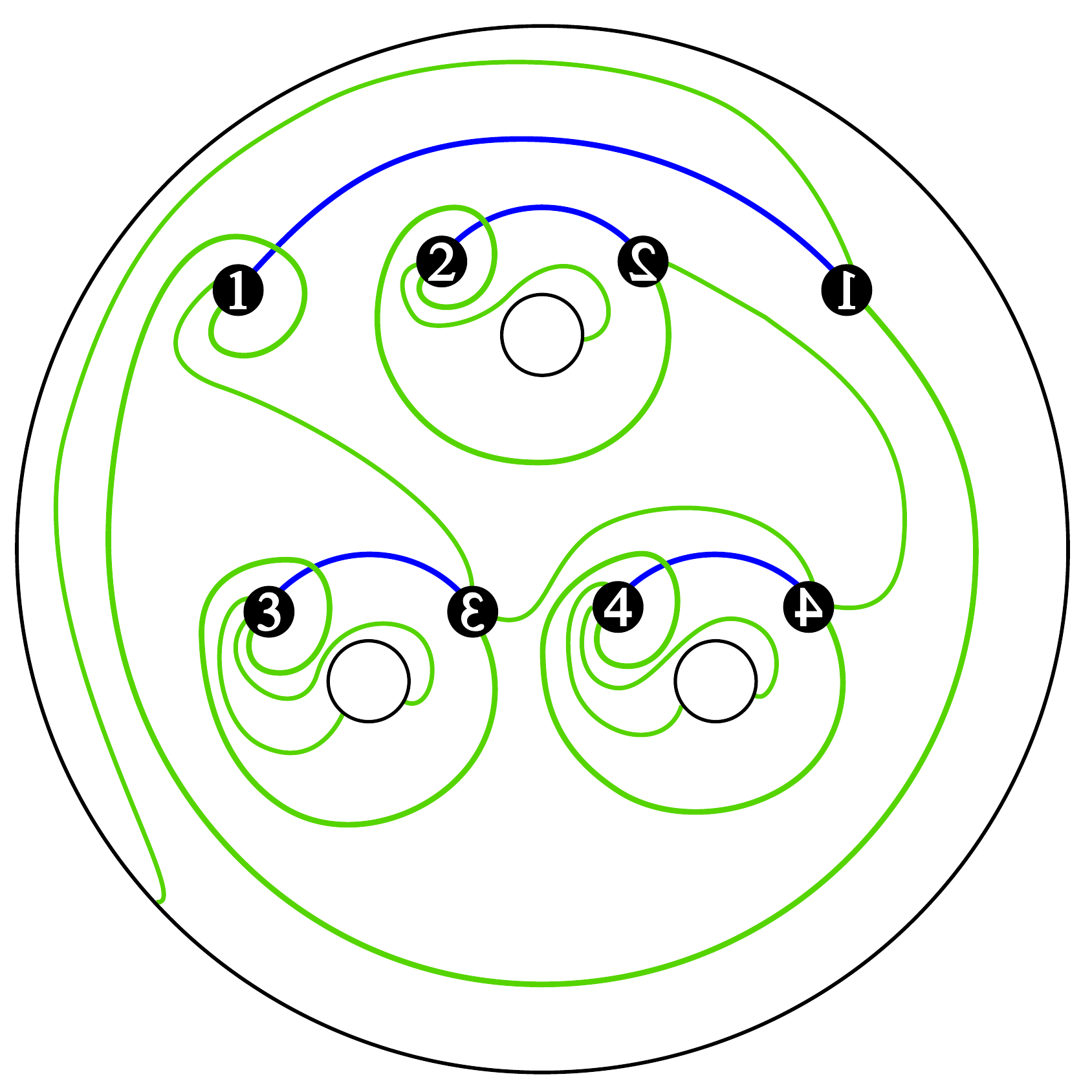}}%
\hspace{2cm}
\subcaptionbox{}
{\includegraphics[width=2in,height=2in]{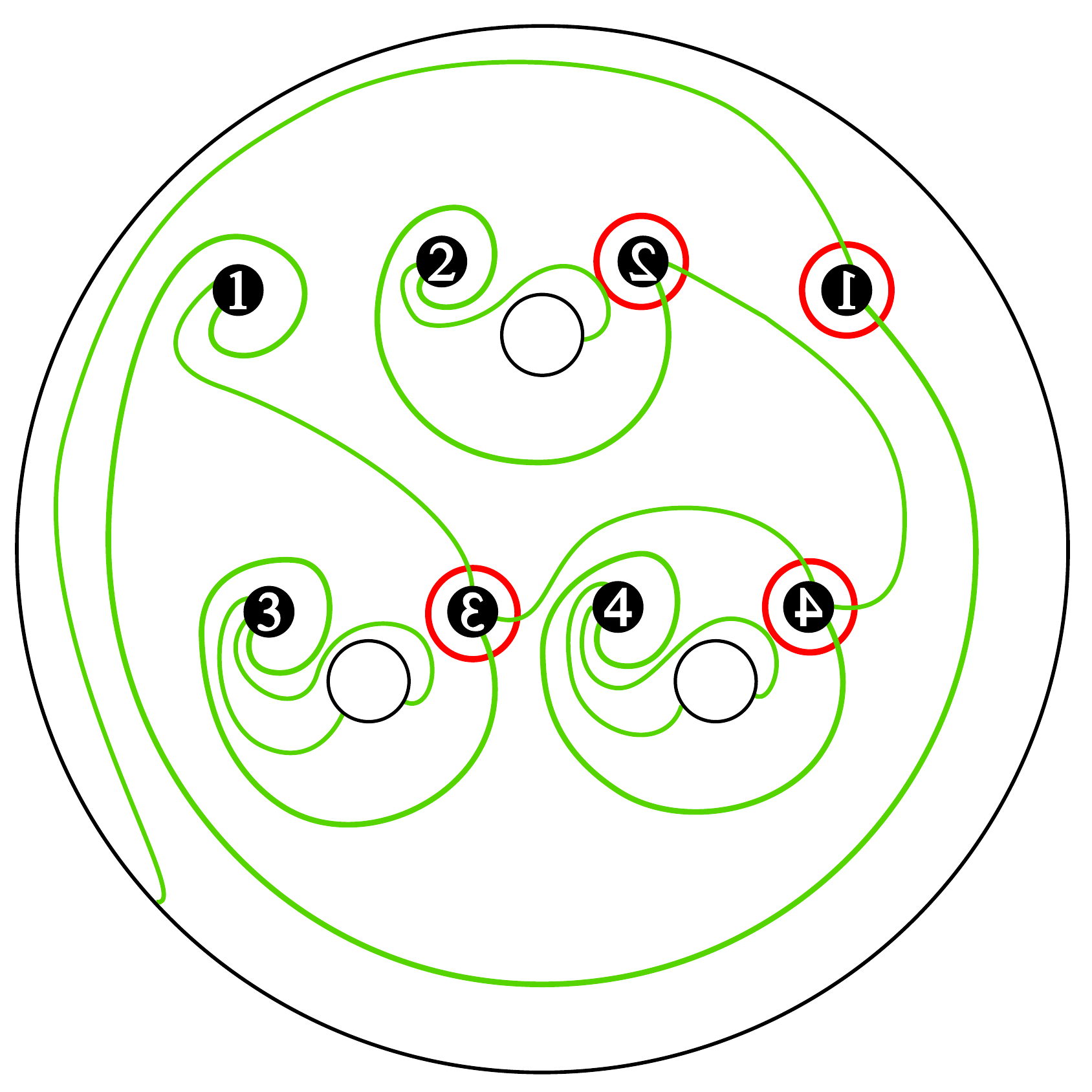}}%

\subcaptionbox{}
{\includegraphics[width=2in,height=2in]{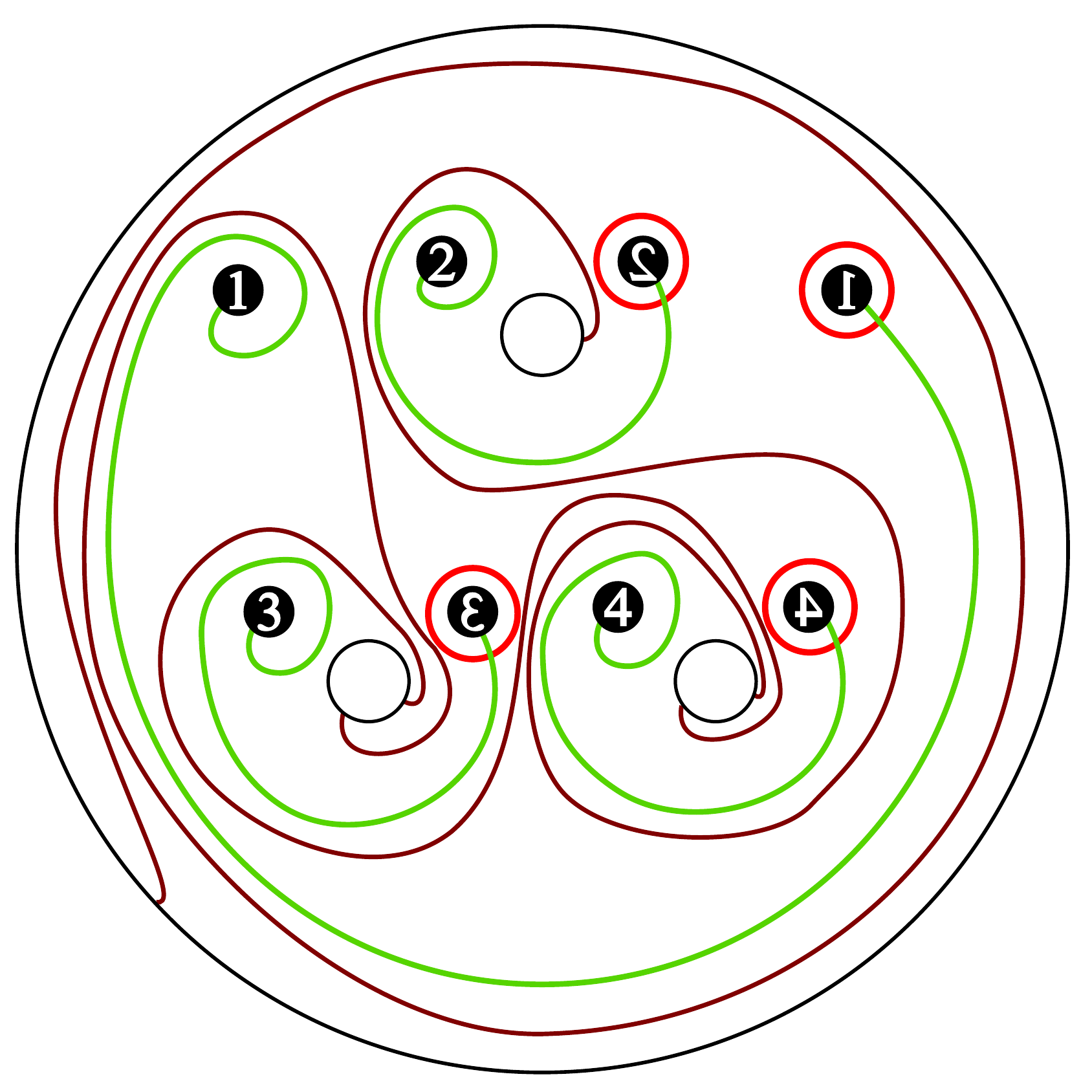}}%
\hspace{2cm}
\subcaptionbox{}
{\includegraphics[width=2in,height=2in]{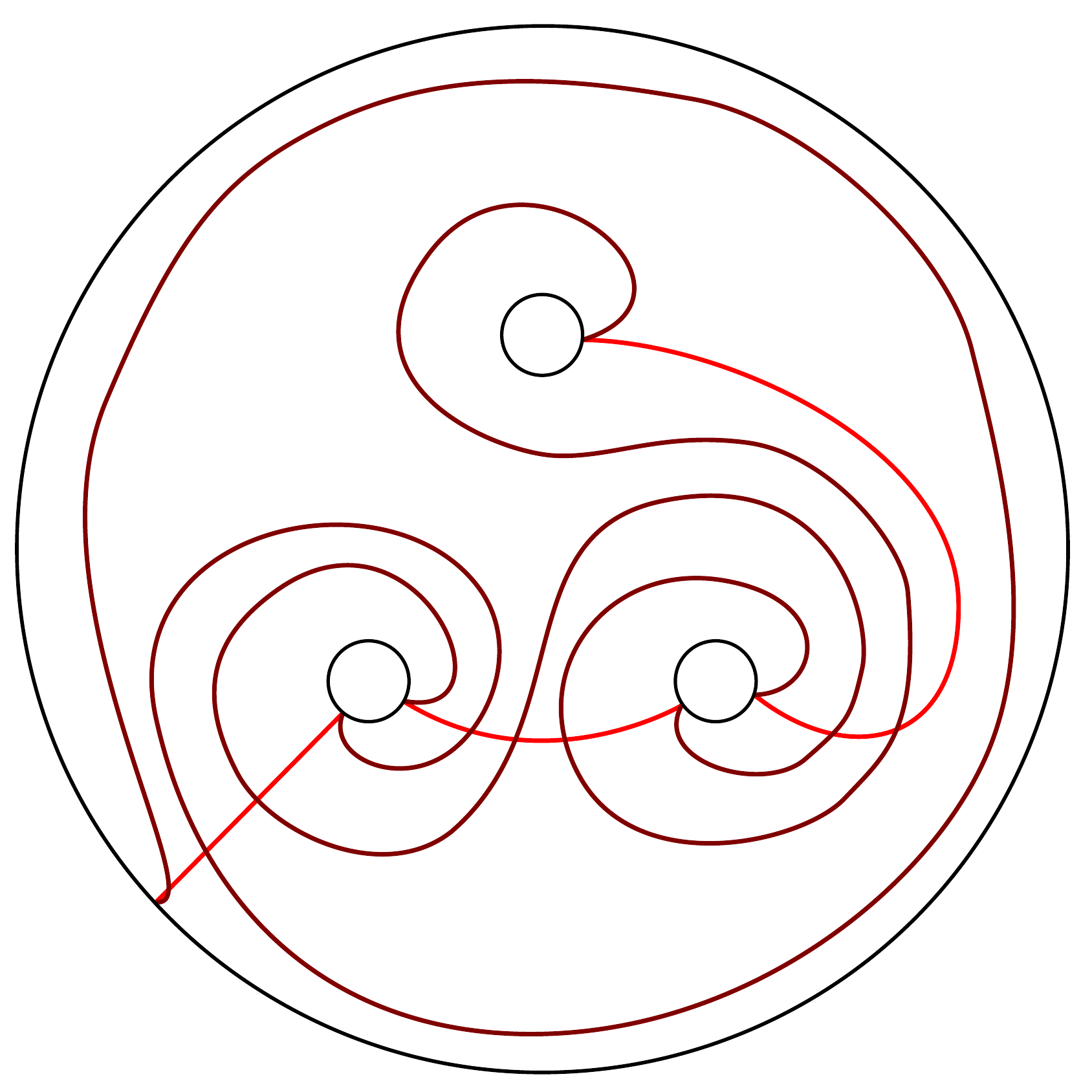}}\caption{A relative trisection diagram $\D$ and a path $\delta\in HT_p(\Sigma)$ valid with respect to $\D$. See Example~\ref{example} for a more detailed caption.}\label{F:lantern}
\end{figure}
Here we provide a more detailed caption for Figure~\ref{F:lantern}.

\begin{enumerate}
    \item[(A)] $\mathcal{A}_\alpha$ is disjoint from both $\alpha$ and $\beta$. The segment of $\delta$ between the vertices corresponding to $\alpha\cup A_\alpha$ and $\beta\cup A_\alpha$ consists of four type 1 edges.
    \item[(B)] We must slide $\mathcal{A}_\beta=A_\alpha$ over $\beta$ curves so that they are disjoint from $\gamma$. These slides correspond to three type 0$^{\boundary}$ edges in $\delta$.
    \item[(C)] $\mathcal{A}_\gamma$ viewed as arcs in $\Sigma$. One generalized arc slide was done to each arc in $\mathcal{A}_\beta$ to obtain $\mathcal{A}_\gamma$. Replacing $\beta$ with $\gamma$ corresponds to four type 1 edges in $\delta$.
    \item[(D)] We must slide $\mathcal{A}_\gamma$ over $\gamma$ so that they are disjoint from $\alpha$ to obtain $\overline{\mathcal{A}}$ These slides correspond to three type 0$^{\boundary}$ edges in $\delta$.
    \item[(E)] One generalized arc slide was done to each arc in $\mathcal{A}_\gamma$ to obtain $\overline{\mathcal{A}}$. Replacing $\gamma$ with $\alpha$ corresponds to four type 1 edges in $\delta$.
    \item[(F)] The monodromy $\phi:\Sigma_\alpha\to\Sigma_\alpha$ induced by the relative trisection. Specifically, we draw $A_\alpha$ and $\phi(A_\alpha)$.
\end{enumerate} 
\end{example}

\subsection{Gluing and comparison with the \texorpdfstring{$\L$}{L}-invariant for closed manifolds}\label{gluingsection}
Let $\D=(\Sigma; \alpha, \beta,\gamma; A_\alpha, A_\beta, A_\gamma)$ and $\D'=(\Sigma', \alpha', \beta',\gamma'; A_{\alpha'}, A_{\beta'}, A_{\gamma'})$ be arced relative trisection diagrams which correspond to relatively trisected $4$-manifolds $X$ and $X'$ with connected boundaries $W$ and $W'$ respectively. (This can be generalized to 4-manifolds with multiple boundary components). Assume that $\O_\D$ and $\O_{\D'}$ are compatible; i.e. 
there is an orientation-reversing diffeomorphism from $W$ to $W'$ taking the pages of $\O_\D$ to the pages of $\O_{D'}$ setwise (rel boundary), and specifically taking the pages $\Sigma_\alpha,\Sigma_\beta$, and $\Sigma_\gamma$ to $\Sigma_{\alpha'},\Sigma_{\beta'}$, and $\Sigma_{\gamma'}$, respectively. 

Such a map defines a closed surface $S = \Sigma \cup_f \Sigma'$ along with  essential, simple, closed curves $\overline{\alpha}=\{\overline{\alpha}_1, \ldots, \overline{\alpha}_{2p+b-1}\},$ where $\overline{\alpha}_i:=a_i\cup_fa'_i$. We will denote $\alpha''=\alpha \cup \overline{\alpha}\cup \alpha'$; similarly for $\beta''$ and $\gamma''$.

\begin{theorem}[{\cite{CO,nickthesis}}]

\label{thm:GluedDiagramCurves}
Let $X_{\T}$ and $X'_{\T'}$ be relatively trisected $4$-manifolds with connected boundary which induce compatible open book decompositions on their diffeomorphic boundaries. Then $(X\underset{\partial}{\cup}X', \T\underset{\partial}{\cup}\T')$ is a closed trisected $4$-manifold. Moreover, if $\D$ and $\D'$ as above are the corresponding compatible relative trisection diagrams, then $(S, \alpha'', \beta'', \gamma'')$ is the trisection diagram corresponding to $\T''=\T\cup\T'$.
\end{theorem}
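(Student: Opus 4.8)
The plan is to deduce this from the gluing theorem of the first author~\cite{nickthesis}, which already provides that $Z := X\cup_\partial X'$ with pieces $Z_i := X_i\cup X'_i$ is a closed trisected $4$-manifold $(Z,\T'')$; what remains is to identify $\T''$ in terms of the given arced diagrams. Since, by the spine argument recalled just after Theorem~\ref{laudenbach}, a closed trisection is determined up to diffeomorphism by its triple-intersection surface together with the three cut systems that cut out its handlebodies, it suffices to check two things: that $Z_1\cap Z_2\cap Z_3$ is exactly $S=\Sigma\cup_f\Sigma'$, and that each handlebody $Z_i\cap Z_j$ is obtained from $S$ by attaching $3$-dimensional $2$-handles along the listed curves ($\alpha''$, $\beta''$, $\gamma''$ respectively), with the three resulting pairs being Heegaard diagrams for connected sums of $S^1\times S^2$. (Throughout, write $g'$ for the genus of $\Sigma'$; recall $\Sigma$ and $\Sigma'$ share the same $p$ and $b$.)

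First I would compute the triple intersection by a direct point-set analysis. A point of $Z_1\cap Z_2\cap Z_3$ lying in $X_a$ for every index lies in $X_1\cap X_2\cap X_3=\Sigma$, and the all-primed case gives $\Sigma'$. A mixed point lies in $X\cap X'=\partial X=\partial X'$, and for each index $a$ in $X_a\cap\partial X$ or in its primed counterpart; because $f$ is chosen to carry $X_a\cap\partial X$ onto $X'_a\cap\partial X'$, such a point then lies in all three of $X_1\cap\partial X$, $X_2\cap\partial X$, $X_3\cap\partial X$ (or all three primed). Condition (v) in the definition of a relative trisection says these three ``thirds'' of $\partial X$ pairwise meet only along the pages $X_i\cap X_j\cap\partial X$, whose common intersection is the binding $\partial\Sigma$. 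Hence the mixed locus is just $\partial\Sigma=\partial\Sigma'$, and $Z_1\cap Z_2\cap Z_3=\Sigma\cup_{\partial\Sigma}\Sigma'=S$.

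Next I would identify the handlebodies. The same bookkeeping gives $Z_1\cap Z_2=(X_1\cap X_2)\cup_{\Sigma_\alpha}(X'_1\cap X'_2)$, glued along the common $\alpha$-page $\Sigma_\alpha\cong\Sigma_{\alpha'}$ determined by $f$. Modeling the compression body $X_1\cap X_2$ as $\Sigma_\alpha\times[0,1]$ with $(g-p)$ $1$-handles on $\Sigma_\alpha\times\{1\}$ (so that the surgered $\Sigma_\alpha\times\{1\}$ is $\Sigma$), and $X'_1\cap X'_2$ as $\Sigma_{\alpha'}\times[-1,0]$ with $(g'-p)$ $1$-handles on $\Sigma_{\alpha'}\times\{-1\}$, the union is $\Sigma_\alpha\times[-1,1]$ with $1$-handles on both ends -- a handlebody of genus $(2p+b-1)+(g-p)+(g'-p)$, which is exactly the genus of $S=\partial(Z_1\cap Z_2)$. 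The curves $\alpha''=\alpha\cup\overline\alpha\cup\alpha'$ are then recognized as a defining set: the $\alpha$ curves bound co-cores of the top $1$-handles, the $\alpha'$ curves co-cores of the bottom $1$-handles, and each $\overline\alpha_i=a_i\cup_f a'_i$ bounds the disk formed by capping the rectangle swept out when $a_i$ is isotoped down to $\Sigma_\alpha\times\{0\}$ inside $X_1\cap X_2$ with the analogous rectangle in $X'_1\cap X'_2$, these two rectangles agreeing along $\Sigma_\alpha\times\{0\}$ precisely because the arc systems are taken compatibly with $f$. There are $(g-p)+(g'-p)+(2p+b-1)$ such curves, matching the genus, and surgering $S$ along all of them yields $S^2$, so $\alpha''$ is a cut system realizing the handlebody $Z_1\cap Z_2$; symmetrically for $\beta''$ on $Z_2\cap Z_3$ and $\gamma''$ on $Z_3\cap Z_1$. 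Finally, since $\partial Z_i=(Z_i\cap Z_{i-1})\cup_S(Z_i\cap Z_{i+1})$ is a Heegaard splitting of $\partial(\natural_{k''_i}S^1\times B^3)=\#_{k''_i}S^1\times S^2$, the three pairs of curves are Heegaard diagrams for connected sums of $S^1\times S^2$, so $(S;\alpha'',\beta'',\gamma'')$ is a genuine trisection diagram; and because the spine it encodes is exactly that of $\T''$, it describes $\T''=\T\cup\T'$ by Theorem~\ref{laudenbach}.

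The main obstacle is the identification, in both the triple-intersection and the handlebody computations, that the ``mixed'' parts are precisely pages of the induced open books -- this is exactly where condition (v) and the hypothesis that $f$ matches the two open books page by page must be used carefully -- together with the related subtlety that each $\overline\alpha_i$ genuinely bounds a disk, which forces the arc systems $A_\alpha,A_{\alpha'}$ (and the $\beta$- and $\gamma$-analogues) to correspond under $f$: for an arbitrary choice of $a'_i$ the loop $a_i\cup_f a'_i$ need not be null-homotopic in $Z_1\cap Z_2$. Once the gluing surfaces are correctly pinned down, the handlebody structure and the cut-system count are routine.
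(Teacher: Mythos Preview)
The paper does not supply its own proof of this statement: it is quoted from~\cite{CO,nickthesis} and used as a black box (notably in the proof of Lemma~\ref{weirdslemma}). So there is no in-paper argument to compare against.

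Your reconstruction is correct and tracks the standard argument. The point-set computation of the triple intersection is right, including the observation that the ``mixed'' locus lands in the binding $\partial\Sigma$ via condition~(v). The handlebody identification $Z_1\cap Z_2=(X_1\cap X_2)\cup_{\Sigma_\alpha}(X'_1\cap X'_2)$ and the genus count $(g-p)+(g'-p)+(2p+b-1)=g+g'+b-1$ are correct, as is the description of the bounding disks for $\alpha$, $\alpha'$, and $\overline\alpha$ curves. You also correctly flag the genuine content of the hypothesis: the curves $\overline\alpha_i=a_i\cup_f a'_i$ bound disks only when the pushed-down images of $a_i$ and $a'_i$ in the page $\Sigma_\alpha\cong\Sigma_{\alpha'}$ agree under $f$, not merely when their endpoints match on $\partial\Sigma$. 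This is exactly what ``compatible arced diagrams'' must encode, and the paper implicitly relies on it when it writes ``We can, in particular identify $A_\alpha$ with $A_{\alpha'}$'' in the proof of Lemma~\ref{weirdslemma}.
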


\begin{lemma}\label{weirdslemma}
Let $\T$ and $\T'$ be relative trisections of $4$-manifolds $X_\T, X_{\T'}$ with $\boundary X_{\T}\cong \boundary X_{\T'}$. Assume $\T$ and $\T'$ are compatible, so they can be glued to obtain a trisection $\T''$ of $X_\T\cup X_{\T'}$. Let $S$ be the closed surface obtained by gluing the $\alpha$-pages of $\T$ and $\T'$, with cut system $s$ made by gluing the $\alpha$ arcs of $\T$ and $\T'$ along their common endpoints.

Then $$\L(\T'')\leq \rL(\T)+\rL(\T')+d(v_s,v_{\phi(s)}),$$ where $\phi$ is the monodromy of the open book $\O_{\T}$, $v_s$ and $v_{\phi(s)}$ are the vertices in the cut complex $HT(S)$ corresponding to cut systems $s$ and $\phi(s)$ (respectively), and $d$ denotes the distance in the cut complex.

\end{lemma}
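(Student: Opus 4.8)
The plan is to build a valid loop in the cut complex $HT(S)$ for the glued trisection $\T''$ out of the two valid paths for $\T$ and $\T'$ together with a minimal path between $v_s$ and $v_{\phi(s)}$ in $HT(S)$. First I would fix arced relative trisection diagrams $\D_A$ of $\T$ and $\D'_A$ of $\T'$ together with valid paths $\delta$, $\delta'$ achieving $\rL(\T)$ and $\rL(\T')$ respectively; by Theorem~\ref{thm:GluedDiagramCurves}, the glued diagram $(S;\alpha'',\beta'',\gamma'')$ is a trisection diagram for $\T''$. The key observation is that each vertex of $HT_p(\Sigma)$ along $\delta$ — a cut system of closed curves plus arcs for $\Sigma$ — can be glued to the corresponding fixed arced cut system of $\T'$ to produce a cut system of closed curves for $S$: the $g-p$ closed curves of $\Sigma$, the $g'-p'$ closed curves of $\Sigma'$, and the $2p+b-1$ glued arcs $\overline{\alpha}_i = a_i\cup_f a'_i$. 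Crucially, a type $0^\boundary$ edge in $\delta$ (an arc slide with fixed endpoints and compatible normal framing) becomes an ordinary type $0$ edge of $HT(S)$ after gluing, because the glued curve $a_1\cup_f a_1'$ is related to $a_1'\cup_f a_1'$ by a handleslide over the glued curve through the other arcs — this is exactly why the framing condition was imposed in Definition~\ref{def:HTpComplex}. Type $0$ edges stay type $0$, and type $1$ edges stay type $1$.

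Next I would assemble the loop. Run $\delta$ (for $\T$) with $\T'$'s diagram held fixed on the $S$ side, from the vertex corresponding to $\alpha''\cup$(glued $\alpha$-arcs) through the $\beta$- and $\gamma$-stages and back to a vertex $v$ whose closed $\Sigma$-curves agree with $\alpha$ but whose glued arcs are $\phi(s)$ (since $v_7$ differs from $v_1$ by the monodromy of $\O_\T$). Then, holding $\T$'s side (now standardized to $\alpha$) fixed, run $\delta'$ for $\T'$ similarly; since $\O_{\T'}$ is compatible with $\O_\T$, its monodromy is $\phi^{-1}$ up to conjugation and orientation, so running $\delta'$ carries the glued arcs from $\phi(s)$ back toward $s$ — or more carefully, I would arrange the arcs of $\D'_A$ to be the $f$-images of $A_\alpha$ so that the two monodromy stages compose to a conjugate of the identity on the arc data, leaving a residual cut system that differs from the start by an element fixing the closed curves. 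Finally, insert the geodesic of length $d(v_s, v_{\phi(s)})$ in $HT(S)$ to close up the loop, and check that the resulting loop is valid with respect to $\T''$: the good-pair conditions at the $(\alpha,\beta)$, $(\beta,\gamma)$, $(\gamma,\alpha)$ junctions for $\T''$ follow from the good-pair conditions for $\T$ and $\T'$ separately (the numbers of dual curves add), and all the junction edges remain type $1$.

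The length bookkeeping then gives $l_{X,\T''} \le |\delta| + |\delta'| + d(v_s, v_{\phi(s)})$, and the normalization constants add correctly: $3(g+p+b-1) - \sum k_i$ for $\T$ plus $3(g'+p'+b'-1) - \sum k_i'$ for $\T'$ should total $3 g'' - \sum k_i''$ for the closed trisection $\T''$, where $g'' = g + g' + (2p+b-1)$ and the $k_i''$ combine according to the gluing formula in Theorem~\ref{thm:GluedDiagramCurves}. Matching these constants is a routine but slightly fiddly Euler-characteristic/genus computation that I would carry out explicitly, and it yields exactly $\L(\T'') \le \rL(\T) + \rL(\T') + d(v_s, v_{\phi(s)})$.

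The main obstacle I anticipate is the careful handling of the arc data across the two monodromy stages: I need the glued arcs at the end of running $\delta$ to be precisely $\phi(s)$ (not merely $\phi(s)$ up to some ambiguity), and then the $\delta'$ stage must land me at a vertex whose glued arcs agree with those at a vertex reachable from $v_s$ by the inserted geodesic — so I must be precise about which arcs are held fixed on each side, the conjugation ambiguity in the monodromy algorithm (Remark~\ref{R:1}), and the orientation reversal in the compatibility diffeomorphism $f$. Getting this identification exactly right, rather than "up to isotopy and conjugation," is where the real work lies; everything else is edge-type bookkeeping and the normalization computation.
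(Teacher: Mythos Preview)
Your overall strategy---lift the edges of the two valid paths to $HT(\Sigma'')$, concatenate, and close the resulting path with a geodesic of length $d(v_s,v_{\phi(s)})$---is the paper's strategy, and your observation that a type $0^\partial$ edge becomes an ordinary type $0$ edge after gluing is exactly the key point the paper isolates.

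There is, however, a genuine gap in your specific construction. You propose to run all of $\delta$ while holding the $\Sigma'$-side fixed at $\alpha'$, and then run all of $\delta'$ while holding the $\Sigma$-side fixed at $\alpha$. But during the first stage the cut system on $\Sigma''$ has the form $(\text{something slide-equivalent to }\beta)\cup\alpha'\cup(\text{glued arcs})$, and since $\alpha'$ and $\beta'$ define different compression bodies this vertex does \emph{not} lie in $\Gamma_{\beta''}$. Your loop therefore never enters $\Gamma_{\beta''}$ or $\Gamma_{\gamma''}$, so it is not valid in the sense of Definition~\ref{def:unnormalizedL}, and the good-pair check you list cannot be carried out at the places you indicate.

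The paper's own proof is terse here (it also writes ``first all of $\delta$, then all of $\delta'$''), but the construction that actually works is the interleaved one spelled out in the proof of Proposition~\ref{prop:relintstab}: one first chooses the arced diagrams \emph{compatibly}, so that the gluing map identifies $A_\alpha$ with $A_{\alpha'}$, $A_\beta$ with $A_{\beta'}$, and $A_\gamma$ with $A_{\gamma'}$; one then alternates---the $\Gamma_\alpha$-segment of $\delta$, the $\Gamma_{\alpha'}$-segment of $\delta'$, the type~1 segment of $\delta$, the type~1 segment of $\delta'$, and so on---so that both sides are in the $\alpha$-, $\beta$-, or $\gamma$-phase simultaneously. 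With this synchronized choice the arc data lines up automatically at every stage, and your anticipated ``main obstacle'' about conjugation and orientation reversal essentially disappears: the only residual mismatch is that the endpoints of the interleaved path differ by the monodromy applied to $s$ on the glued page $S$, which is precisely what the geodesic closes up.
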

\begin{proof}
Let $\delta$ and $\delta'$ be valid paths realizing $\rL(\T)$ and $\rL(\T')$ respectively, and let $\D=(\Sigma; \alpha, \beta,\gamma; A_\alpha, A_\beta, A_\gamma)$ and $\D'=(\Sigma', \alpha', \beta',\gamma'; A_{\alpha'}, A_{\beta'}, A_{\gamma'})$ be arced relative trisection diagrams for $\T$ and $\T'$ representing $\delta$ and $\delta'$, respectively. 
We form a trisection diagram $\D''$ for $\T''$ with trisection surface $\Sigma'' = \Sigma \cup \Sigma'$ as in Theorem \ref{thm:GluedDiagramCurves}. We can, in particular identify $A_\alpha$ with  $A_{\alpha'}$,  $A_\beta$ with  $A_{\beta'}$  and $A_\gamma$ with  $A_{\gamma'}$ to form some closed $\alpha'',\beta'',\gamma''$ curves in $\D''$. 
Now we form a path, $\delta''$, in $HT(\Sigma'')$ which is valid with respect to $\T''$. In rough language, the strategy is to take $\delta''$ to agree with $\delta$ on one side of $\Sigma''$ and $\delta'$ on the other half of $\Sigma''$.

Comparing Definitions \ref{def:HTComplex} and \ref{def:HTpComplex}, we see that each type $0$ or type $1$ edge of  $\delta$ or $\delta'$ corresponds directly to a type $0$ or type $1$ edge on the cut systems of $\Sigma''$. 
Moreover, a type $0^{\partial}$ edge of $\delta$ or $\delta'$ corresponds to removing an arc, $a$, and replacing it with an arc, $a'$, which misses all of the other curves and arcs. We can realize this change by a type $0$ edge $HT(\Sigma'')$ 
by removing a curve that contains $a$, and replacing the half of the curve corresponding to $a$ by $a'$ and leaving the other half fixed.

Since each edge in the paths $\delta$ and $\delta'$ corresponded to an edge in  $HT(\Sigma'')$, we may construct a path $\tilde{\delta}''$. by first following all of the edges corresponding to edges in $\delta$ and then following all of the edges corresponding to edges in $\delta'$. Note that $|\tilde{\delta}''|=\rL(\T)+\rL(\T')$. However, $\tilde{\delta}''$ is not a loop: the start and endpoints correspond to cut systems of $\Sigma$ that differ by an application of the monodromy of $\O_{\T}\cup\O_{\T'}$ on the subsurface $S:=\Sigma_{\alpha}\cup\Sigma_{\overline{\alpha}}$ of $\Sigma''$.

That is, $\alpha''=\alpha\cup \alpha'\cup s$ while $\overline{\alpha}''=\alpha\cup\alpha'\cup\phi(s)$.
Since $HT(S)$ is connected \cite{HatThu80}, there is a path from $s$ to $\phi(s)$ in $HT(S)$. Choose a path $\hat{\delta}''$ achieving minimum possible length ($d(v_s,v_{\phi(s)})$). Each slide or flip corresponding to an edge in $\hat{\delta}''$ can be lifted to a slide or flip of $s,\phi(s)$ in $\Sigma''$ disjoint from $\alpha\cup\alpha'$, so we may lift $\hat{\delta}''$ to a path in $HT(\Sigma'')$ from the endpoint of $\tilde{\delta}''$ to the starting point of $\tilde{\delta}''$. 
Therefore, the union $\delta''=\tilde{\delta}''\cup\hat{\delta}''$ is a valid path for $T''$ of length $\rL(\T)+\rL(\T')+d(v_s,v_{\phi(s)})$, proving the lemma.
\end{proof}

We separately consider the special case of puncturing closed manifolds.

\begin{proposition}\label{lem:puncture}
Let $\widehat{X}$ be a smooth, orientable, closed 4-manifold. Let $X:=\widehat{X}\setminus\mathring{B}^4$. Then $\rL(X)\le\L(\widehat{X})$.
\end{proposition}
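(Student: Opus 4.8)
The plan is to start from a trisection $\widehat{\T}$ of $\widehat{X}$ realizing $\L(\widehat{X})$ and a diagram $\widehat{\D}=(\widehat{\Sigma};\alpha,\beta,\gamma)$ for it, together with a valid loop $\delta$ in $HT(\widehat{\Sigma})$ realizing $\L_{\widehat{X},\widehat{\T}}$. Puncturing $\widehat{X}$ at a point of the triple intersection surface $\widehat{\Sigma}$ produces a relative trisection $\T$ of $X=\widehat{X}\setminus\mathring{B}^4$ whose triple intersection surface is $\Sigma=\widehat{\Sigma}\setminus\mathring{D}^2$, a genus--$g$ surface with $b=1$ boundary component, and $p=0$; the $\alpha,\beta,\gamma$ curves are unchanged (one may arrange the puncture point to be disjoint from all of them), so $\T$ is a $(g,k;0,1)$-relative trisection with the same $k=(k_1,k_2,k_3)$. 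This is the relative trisection of $X$ we will use as a test case, and it suffices to show $\rL(\T)\le\L(\widehat{X})$, which by Definition~\ref{def:rL} means exhibiting a valid path for a diagram of $\T$ with length $\le\L(\widehat{X})+3(g+0+1-1)-(k_1+k_2+k_3) = \L(\widehat{X})+3g-\sum k_i$.

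Next I would set up the $p$-cut complex $HT_0(\Sigma)$. Since $p=0$ and $b=1$, every cut system for $\Sigma$ consists of $g$ closed curves and $2p+b-1=0$ arcs, so a cut system for $\Sigma$ is literally the same data as a cut system for $\widehat{\Sigma}$ (filling in the disk), and there are no type $0^{\boundary}$ edges at all. Thus there is a natural identification $HT_0(\Sigma)\cong HT(\widehat{\Sigma})$ under which type $0$ and type $1$ edges correspond. Under this identification the subgraphs $\Gamma_\alpha,\Gamma_\beta,\Gamma_\gamma$ agree, and ``good pair'' in the sense of Definition~\ref{def:relvalid} (a path of exactly $g+p+b-k_i-1 = g-k_i$ type $1$ edges) matches ``good pair'' in the closed sense of Definition~\ref{def:unnormalizedL}. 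The one genuine difference is that a valid loop $\delta$ in the closed case is a \emph{closed} loop, whereas a valid path in the relative case need only have $v_1$ and $v_7$ sharing the same closed curves; but a closed loop trivially satisfies the relative condition (take $v_7=v_1$), so the image of $\delta$ under the identification is a valid path with respect to the arced diagram $\D_A=(\Sigma;\alpha,\beta,\gamma;\emptyset,\emptyset,\emptyset)$ of $\T$. Its length is unchanged, $|\delta|=l_{\widehat X,\widehat\T}$.

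Finally I would assemble the normalization bookkeeping: $\rL(\T)\le \rL(\D_A) = |\delta|-3(g+0+1-1)+(k_1+k_2+k_3) = l_{\widehat X,\widehat\T}-3g+k_1+k_2+k_3 = \L_{\widehat X,\widehat\T}$, and then minimizing over all $\widehat\T$ gives $\rL(X)\le\rL(\T)\le\L(\widehat X)$. I do not anticipate a serious obstacle here; the only point requiring a little care is checking that the puncture really can be taken disjoint from the $\alpha,\beta,\gamma$ curves and from the handlebody/compression-body structure so that the resulting relative trisection has exactly the boundary data $(p,b)=(0,1)$ claimed (this is standard, and is essentially the observation that removing a ball centered on $\widehat\Sigma$ interacts with a trisection the way removing a ball centered on a Heegaard surface interacts with a Heegaard splitting), and verifying that the ``monodromy'' discrepancy between $v_1$ and $v_7$ is genuinely permitted rather than required to be nontrivial — which it is, since the open book induced on $\partial X\cong S^3$ here is the trivial (disk) open book with trivial monodromy.
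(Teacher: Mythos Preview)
Your proposal is correct and follows essentially the same approach as the paper: puncture the trisection surface to obtain a $(g,k;0,1)$-relative trisection diagram of $X$, observe that since $2p+b-1=0$ there are no arcs and the valid loop in $HT(\widehat{\Sigma})$ transports directly to a valid path in $HT_0(\Sigma)$ of the same length, and then compare normalizations. The only minor imprecision is your claim of a natural \emph{identification} $HT_0(\Sigma)\cong HT(\widehat{\Sigma})$---in fact the map filling in the disk is only a surjection on vertices (curves in $\Sigma$ that differ by an isotopy across the boundary disk become equal in $\widehat{\Sigma}$)---but all you actually need, and all the paper uses, is that a valid loop in $HT(\widehat{\Sigma})$ lifts to a valid path in $HT_0(\Sigma)$ after perturbing the cut systems off the puncture point.
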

\begin{proof}
Let $\widehat{\D}=(\widehat{\Sigma},\alpha,\beta,\gamma)$ be a $(g,k)$-trisection diagram of $\widehat{X}$. Deleting a small open disk in $\widehat{\Sigma}$ (away from $\alpha,\beta,\gamma$) yields a $(g,k;p,b)=(g,k;0,1)$-relative trisection diagram $\D=(\Sigma;\alpha,\beta,\gamma)$ of $X$.

Let $\widehat{\delta}$ be a closed loop in $HT(\widehat{\Sigma})$ valid with respect to $\widehat{\D}$. Each vertex in $\widehat{\delta}$ corresponds to a cut system of $g$ curves on $\widehat{\Sigma}$. Perturb each cut system slightly if necessary so that the curves always live in $\Sigma$. Each of these cut systems is then a cut system for $\Sigma$ (including $2p+b-1=0$ arcs). Then each vertex and edge of $\widehat{\delta}$ naturally corresponds to a vertex or edge in $HT_0(\Sigma)$, yielding a path $\delta$ in $HT_0(\Sigma)$ valid with respect to $\D$. Since $|\delta|=|\widehat{\delta}|$, we conclude
\begin{align*}
    \rL(\D)+3(g+p+b-1)-(k_1+k_2+k_3)&\le\L(\widehat{\D})+3g-(k_1+k_2+k_3)\\
    \rL(\D)&\le\L(\widehat{\D}).
\end{align*}

Since this holds for all $\widehat{\D}$, we conclude $\rL(X)\le\L(\widehat{X})$.
\end{proof}

\subsection{Interlude: Murasugi sum}\label{sec:murasugi}

In this subsection, we describe how to perform another common procedure via relative trisection diagrams: Murasugi sum. The language of cut complexes introduced in this section is helpful in carefully stating the procedure.

We remind the reader that an open book $\O$ of a 3-manifold $X\# Y=\mathring{X}\cup_S\mathring{Y}$ (with connected-sum sphere $S$ and $\mathring{X}:=X\setminus\mathring{B}^4, \mathring{Y}:=Y\setminus\mathring{B}^4$) is a Murasugi sum when $S$ intersects every page of $\O$ in a disk. For a fixed page $L$, we usually refer to this disk $P\subset L$ as a $2n$-gon, where $n$ is chosen by requiring alternating edges of $P$ to be in the boundary of $\overline{((L\setminus P)\cap \mathring{X})}$ while the other edges of $P$ are in the boundary of $\overline{((L\setminus P)\cap \mathring{Y})}$. By intersecting $\O$ with $\mathring{X}$ or $\mathring{Y}$ and capping boundaries with balls and disks, we obtain open books $\O_X$ and $\O_Y$ on $X$ and $Y$. We say that $\O$ is obtained by Murasugi-summing $\O_X$ and $\O_Y$ along 2n-gons $P_X$ and $P_Y$ in $\hat{L\cap X}$ and $\hat{L\cap Y}$, respectively. When $n=1$, we usually refer to this operation simply as ``connected-sum." When $n=2$, we generally refer to this operation as ``plumbing." We give an example of plumbing two pages together along a rectangle in Figure~\ref{F:murasugiex} which serves as an illustration of the following theorem.

\begin{theorem}\label{thm:murasugi}
There is an explicit algorithm to plumb two trisections together by Murasugi sum. That is, given relatively trisected 4-manifolds $X$ and $X'$, we may produce a relative trisection of $X\natural X'$ where the induced open book on $\boundary (X\natural X')$ may be chosen to be any desired Murasugi sum of the open books on $\boundary X,\boundary X'$.
\end{theorem}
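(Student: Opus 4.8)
The plan is to leverage the gluing machinery already developed for relative trisections and to realize the Murasugi sum as a controlled boundary connected-sum of relative trisection diagrams, exactly parallel to how relative stabilization was described in Definition~\ref{def:relstab} as a Murasugi sum along squares. First I would recall the setup: given $(X,\T)$ with diagram $\D=(\Sigma;\alpha,\beta,\gamma)$ inducing open book $\O_\D$ on $\partial X$, and similarly $(X',\T')$ with $\D'=(\Sigma';\alpha',\beta',\gamma')$ inducing $\O_{\D'}$, and a choice of $2n$-gons $P\subset L$, $P'\subset L'$ in pages of the two open books along which we wish to Murasugi-sum. The key point is to choose the arcs $A_\alpha$ (equivalently, to specify the $\alpha$-page structure) so that the $2n$-gon $P$ is explicitly visible in $\Sigma_\alpha=X_1\cap X_2\cap\partial X$; because interior (de)stabilization does not affect the induced open book and because the monodromy algorithm lets us track arcs through pages, we can arrange that $P$ appears as a sub-polygon of a page of $\O_\D$ that is visible on the trisection surface, and likewise for $P'$.

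Next I would describe the gluing itself. The manifold $X\natural X'$ is $\mathring{X}\cup_{S}\mathring{X'}$ glued along a ball $B^3$ in $\partial X$ and $\partial X'$; on the level of trisections we want $X_i\natural X_i'$ to be the three pieces. Arrange the boundary connected-sum so that the removed half-balls are centered at interior points of the polygons $P,P'$, sitting inside the triple-intersection surfaces, and so that the product structures of clauses (iv)--(v) of the relative trisection definition match up across the gluing sphere. The resulting decomposition $(Z_1,Z_2,Z_3)$ with $Z_i=X_i\natural X_i'$ then satisfies the relative trisection axioms: each $Z_i\cong\natural_{k_i+k_i'}S^1\times B^3$ since boundary connected-sum of $1$-handlebodies is a $1$-handlebody, the compression bodies $Z_i\cap Z_j$ are boundary connected-sums of compression bodies hence again compression bodies, and the new triple intersection surface $\Sigma\natural\Sigma'$ (boundary connected-sum of surfaces along arcs in $\partial\Sigma,\partial\Sigma'$ lying in the appropriate edges of $P,P'$) is properly embedded with the right genus and number of boundary components. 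Concretely on diagrams, one takes $\Sigma''=\Sigma\natural\Sigma'$ and sets $\alpha''=\alpha\cup\alpha'$, $\beta''=\beta\cup\beta'$, $\gamma''=\gamma\cup\gamma'$ (no new curves, since the new $1$-handle carries no $2$-handle data); the standardizability of each pair follows from standardizing $(\alpha,\beta)$ and $(\alpha',\beta')$ separately. One then checks that $\O_{\D''}$ is precisely the Murasugi sum of $\O_\D$ and $\O_{\D'}$ along $P,P'$: this is a consequence of tracking the product structures through the boundary connected-sum, since gluing the $\alpha$-pages along the arc in the shared edge of $P,P'$ and gluing the other pages consistently is exactly the definition of Murasugi sum of the two open books. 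One should also verify that performing the move with an arbitrary $2n$-gon is possible, i.e. that any choice of $P,P'$ realizing a given Murasugi sum can be made visible on some diagrams of $\T,\T'$ after interior stabilization; this is where one invokes the freedom in the monodromy algorithm (Remark~\ref{R:1}, item 2) together with Gay--Kirby's result that diagrams inducing the same open book are related by interior stabilization.

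The main obstacle, as in Definition~\ref{def:relstab}, is the compatibility of product structures: clause (v) of the relative trisection definition requires agreeing product structures $X_i\cap\partial X\cong[(X_i\cap X_{i-1})\cap\partial X]\times I\cong -[(X_i\cap X_{i+1})\cap\partial X]\times I$, and after the boundary connected-sum one must check these still agree across the gluing region. The delicate point is ensuring that the $2n$-gon $P$ sits inside the trisection surface in a way compatible with these product structures on all three sectors simultaneously, not just on a single page. I expect to handle this by first isotoping so that a neighborhood of $P$ in $\partial X$ looks like a standard $P\times I$ slab respecting the trisection of $\partial X$ into pages, which is possible because $P$ lies in a single page and the open book structure near a page is a mapping torus; then the connected-sum is performed inside this standard slab, where compatibility is manifest, and glued to the corresponding standard slab in $\partial X'$. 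The remaining steps—verifying the handlebody/compression-body structure of the $Z_i$ and identifying the induced open book—are then routine given Theorem~\ref{thm:GluedDiagramCurves} and the explicit local models, so the writeup will concentrate the technical work on this product-structure matching and on the diagrammatic recipe illustrated in Figure~\ref{F:murasugiex}.
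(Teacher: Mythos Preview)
Your proposal has a genuine gap in the diagrammatic recipe. You assert that on the plumbed trisection surface one simply takes $\alpha''=\alpha\cup\alpha'$, $\beta''=\beta\cup\beta'$, $\gamma''=\gamma\cup\gamma'$, with no modification of the primed curves. This fails for $n>1$: running the monodromy algorithm on such a diagram will \emph{not} produce the Murasugi-sum monodromy $\phi'\circ\phi$ (each extended by identity), because the arcs $a_1,\ldots,a_{n-1}$ that cut the $2n$-gon $P$ will, after being slid off the $\beta$ curves in step~1 of the algorithm, land in positions $b_i$ that generally intersect the naively-placed $\beta'$ curves. The algorithm then forces further slides over $\beta'$, entangling the two monodromies in an uncontrolled way. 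The same issue recurs at the $\gamma$ stage. Even in the square case $n=2$ this is visible: compare Figure~\ref{F:relstab}, where the stabilization arc $\delta$ is explicitly allowed to cross $\beta$ and $\gamma$ curves, and the new $\alpha,\beta,\gamma$ curves on the Hopf band are \emph{not} simply the unmodified curves from the $(1,1;0,2)$ diagram of $B^4$.

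The paper's proof repairs exactly this point. It chooses arcs $a_1,\ldots,a_{n-1}$ in $P$, runs the monodromy algorithm on $\D$ to push them to arcs $b_i$ disjoint from $\beta$ and then to arcs $c_i$ disjoint from $\gamma$, and records ambient homeomorphisms $f,g$ of $\overline{\Sigma}$ (identity on $\Sigma'\setminus P'$) carrying $a_i\mapsto b_i$ and $b_i\mapsto c_i$. The correct curve systems are then $\overline{\beta}=\beta\cup f(\beta')$ and $\overline{\gamma}=\gamma\cup (g\circ f)(\gamma')$: the primed curves are dragged along with the plumbing region as it moves under the monodromy algorithm, so that the algorithm on $\overline{\D}$ factors cleanly into the algorithm on $\D$ followed by the algorithm on $\D'$, yielding the composite monodromy. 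Your geometric picture of matching product structures near $P$ is reasonable intuition, but it does not by itself produce the explicit algorithm the theorem promises, and the diagrammatic shortcut you propose is incorrect. (A smaller point: for $n>1$ the new trisection surface is a genuine plumbing $\Sigma\cup_{P=P'}\Sigma'$, not a boundary connected-sum $\Sigma\natural\Sigma'$ along a single arc as you wrote.)
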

\begin{proof}
Let $\mathcal{D}=(\Sigma; \alpha, \beta, \gamma)$ and $\mathcal{D}'=(\Sigma', \alpha', \beta', \gamma')$ be relative trisection diagrams corresponding to the $4$-manifolds with boundary $X$ and $X'$ respectively. Let $P\subset\Sigma_\alpha$ and $P'\subset \Sigma'_{\alpha'}$ be $2n$-gons, with alternating edges contained in $\boundary \Sigma$ or $\boundary \Sigma'$ (respectively). Choose arcs $a_1,\ldots, a_{n-1}$ in $P$ which are properly embedded in $\Sigma$ so that $(P\cap\boundary\Sigma)\cup(a_1\cup\cdots\cup a_{n-1})$ is connected. 

We will produce a relative trisection diagram $\overline{\D}=(\overline{\Sigma},\overline{\alpha},\overline{\beta},\overline{\gamma})$ of $X\natural X'$. The open book $\O_{\overline{\D}}$ will be the open book obtained by the Murasugi sum of $\O_\D$ and $\O_{\D'}$ along $P$ and $P'$, respectively.

Let $\delta$ be a path in $HT_p(\Sigma)$ represented by $\D$. Choose the first vertex of $\delta$ so that it corresponds to a cut system including $a_1,\ldots,a_n$ as arcs. (Such a path $\delta$ exists by Proposition \ref{prop:validpath}.) Let $\overline{\Sigma}$ be obtained from $\Sigma,\Sigma'$ by plumbing along $P,P'$. We continue to identify $\Sigma$ and $\Sigma'$ with subsets of $\overline{\Sigma}$. We set $\overline{\alpha}=\alpha\cup\alpha'$.

Let $v_3$ be the first vertex of $\delta$ in $\Gamma_{\beta}$ with closed curves equal to $\beta$. By following the edges in $\delta$, which each either preserve each $a_i$ or change it by a slide to some other arc, the arc $a_i$ corresponds to some arc $b_i$ in the cut system corresponding to $v_3$. The arc $b_i$ is disjoint from the $\beta$ curves. Choose a homeomorphism $f:\overline{\Sigma}\to\overline{\Sigma}$ so that $f(a_i)=b_i$ for $i=1,\ldots, n-1$ and $f|_{\Sigma'\setminus P'}=\id$. 
Set $\overline{\beta}=\beta\cup f(\beta')$. In words, we performed the monodromy algorithm to slide the $\beta$ curves off the plumbing region $P$ 
before adding the $\beta'$ curves.

Now similarly let $v_5$ be the first vertex of $\delta$ in $\Gamma_{\gamma}$ with closed curves equal to $\gamma$. By following the edges in $\delta$, which each either preserve each $b_i$ or change it by a slide to some other arc, the arc $b_i$ corresponds to some arc $c_i$ in the cut system corresponding to $v_5$. The arc $c_i$ is disjoint from the $\gamma$ curves. Choose a homeomorphism $g:\overline{\Sigma}\to\overline{\Sigma}$ so that $g(b_i)=c_i$ for $i=1,\ldots,n-1$ and $g|_{\Sigma'\setminus P'}=\id$. 
Set $\overline{\gamma}=\gamma\cup(g\circ f)(\gamma')$. In words, we performed the monodromy algorithm to slide the $\beta$ and $\gamma$ curves off the plumbing arc before adding the $\gamma'$ curves.

Now consider the resulting relative trisection diagram $\overline{D}:=(\overline{\Sigma};\overline{\alpha},\overline{\beta},\overline{\gamma})$ of a 4-manifold $Z^4$. By construction (using the monodromy algorithm of~\cite{cgp}; see Section~\ref{sec:monodromy}), a page of $\O_{\overline{\Sigma}}$ is obtained by plumbing $\Sigma$ and $\Sigma'$ along $P$ and $P'$, with the monodromy of $\O_{\overline{\Sigma}}$ the composition of the monodromies of $\O_\D$ and $\O_{\D'}$ (viewed as automorphisms of $\Sigma,\Sigma'\subset\overline{\Sigma}$). We conclude that $\partial Z^4\cong\partial X\#\partial X'$, and $\O_{\overline{\D}}$ is the Murasugi sum of $\O_\D$ and $\O_{\D'}$ along $P,P'$.

To obtain $Z^4$ from $\overline{\D}$, recall that we start with $\overline{\Sigma}\times D^2$, attach 2-handles according to $\overline{\alpha},\overline{\beta},\overline{\gamma}$ curves, and then some 3-handles using a standard model \cite{nickthesis}. The copy of $\D$ contained in $\overline{\D}$ thus corresponds to a subset of $Z^4$ that is diffeomorphic to $X$. Sliding $\beta',\gamma'$ curves in $\mathcal{D}$ (corresponding to 2-handle slides) would yield a copy of $D'$ (i.e.quotienting $\overline{\Sigma}$ by $\Sigma$ and deleting the $\alpha,\beta,\gamma$ curves yields $D'$), so we conclude that $Z^4\cong X\cup X'$, where $X$ and $X'$ are glued along a ball whose boundary is the connected sum sphere in $\partial Z^4=\partial X\#\partial X'$. Thus, $Z^4\cong X\natural X'$ as desired.

\end{proof}

\begin{figure}
\centering
{\subcaptionbox{Relative trisection diagrams $\D$ and $\D'$ of $\T$ and $\T'$. We shade $2n$-gons $P$ and $P'$ in $\Sigma_{\alpha}$, $\Sigma_{\alpha'}$ along which we will plumb $\T$ and $\T'$.}
{\labellist
	\pinlabel {$\D$} at 0 250
	\pinlabel {$\D'$} at 250 225
	\pinlabel {$P$} at 35 210
	\pinlabel {$P'$} at 320 90
    \endlabellist
\includegraphics[scale=.5]{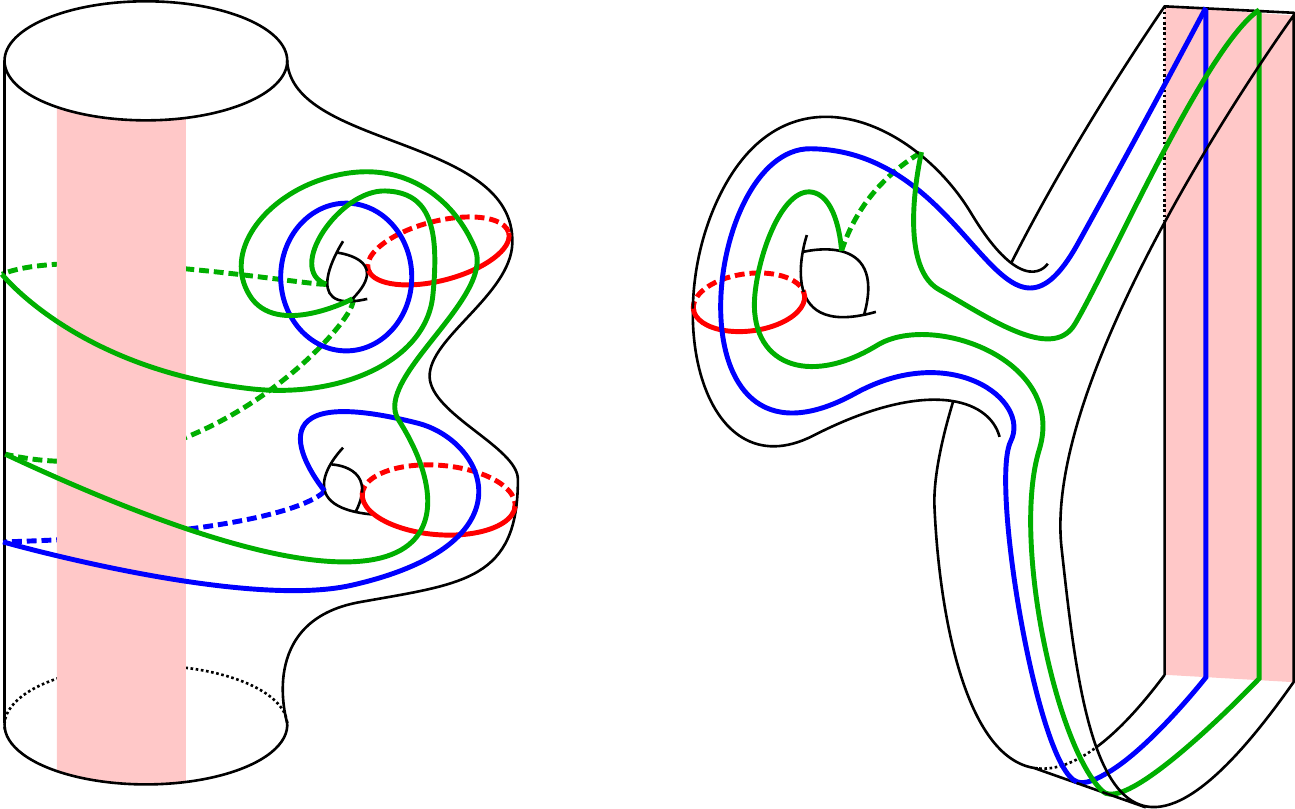}}}
\hspace{.5cm}
\subcaptionbox{The resulting relative trisection diagram $\D''$. This diagram describes the 4-manifold $X_{\T}\natural X_{\T'}$. The induced open book is obtained from $\O_\T$ and $\O_{\T'}$ by Murasugi sum identifying $P$ with $P'$.
}{
\includegraphics[scale=.5]{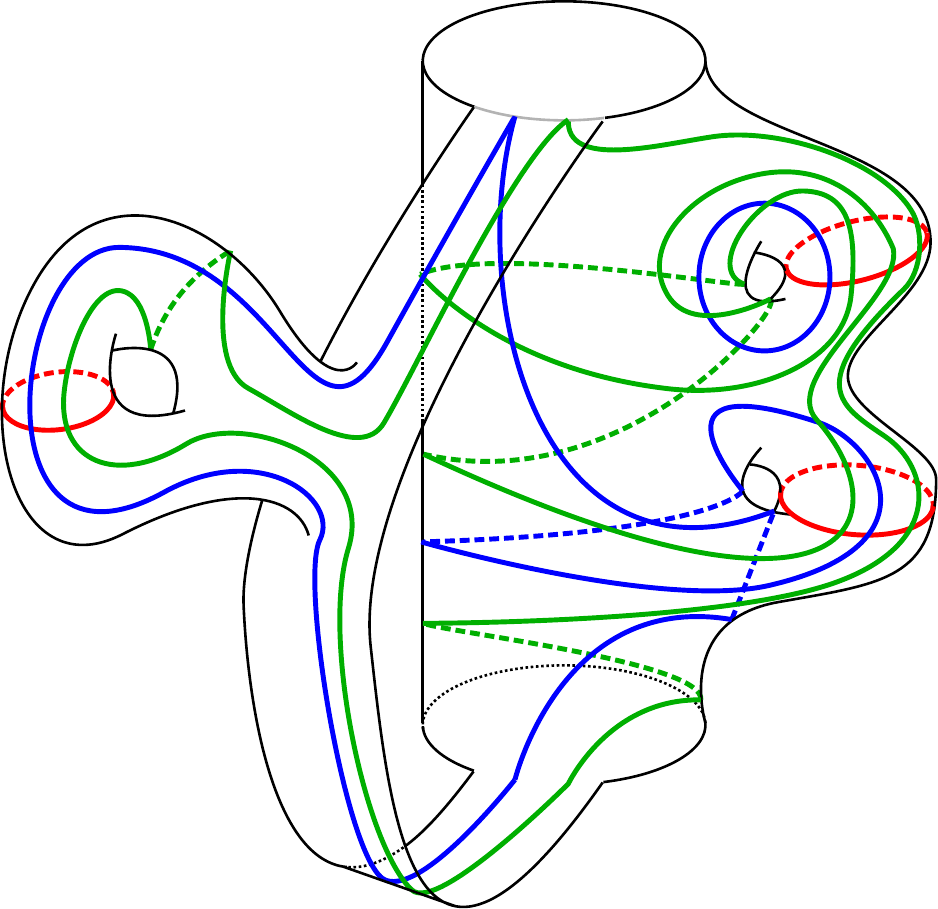}
}

\vspace{.5cm}

{\subcaptionbox{Here we illustrate the procedure described in Theorem~\ref{thm:murasugi}. We perform the monodromy algorithm on $\D$. We include the $\beta'$ and $\gamma'$ curves after making the arcs in $\Sigma$ disjoint from $\beta$ and $\gamma$. In general, we may have to slide $\beta$ and $\gamma$ first as well.}{
    \labellist
	\pinlabel {$a_1$} at 160 245
	\pinlabel {$b_1$} at 450 245
	\pinlabel {$c_1$} at 750 245
	\pinlabel {$\alpha'$} at -20 150
	\pinlabel {$f(\beta')$} at 330 85
	\pinlabel {$(g\circ f)(\gamma')$} at 650 220
    \endlabellist
    \includegraphics[width=120mm]{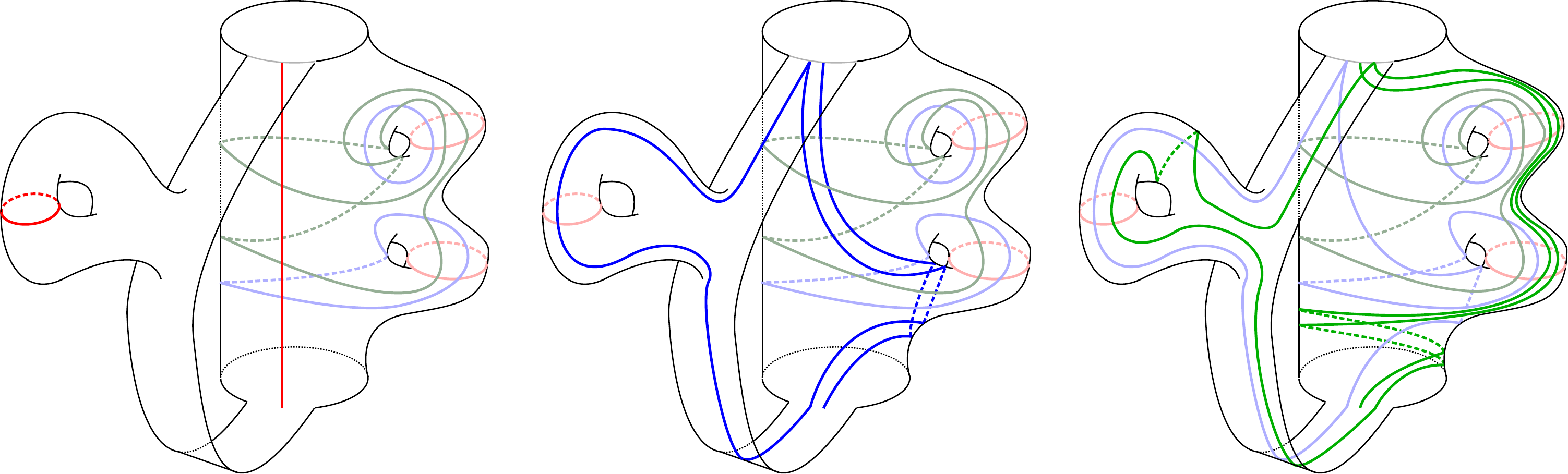}}}
\caption{Theorem~\ref{thm:murasugi} allows us to glue two trisected 4-manifolds and induce any desired Murasugi sum of the open books induced on their boundaries.} 
\label{F:murasugiex}
\end{figure}


\section{Topology of manifolds with small relative \texorpdfstring{\(\L\)}{L}-invariant}\label{sec3:smallL}

Now we prove that when the relative $\L$-invariant of a manifold is small, we may recover information about its topology.

\begin{lemma}
\label{L:0not1}
If $\T$ is a $(g,k;p,b)$-relative trisection diagram of a 4-manifold $X$ with $\rL^\partial(T)\le 1$, then $\partial X\cong \#_{2p+b-1} S^1\times S^2$.
\end{lemma}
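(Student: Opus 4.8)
The plan is to extract from the hypothesis $\rL^\partial(\T)\le 1$ enough control over the monodromy of the induced open book $\O_\T$ to conclude that it is (up to isotopy and conjugation) the identity map on its page, which forces $\partial X$ to be the total space of the trivial open book with page a genus-$p$ surface with $b$ boundary components, namely $\#_{2p+b-1}S^1\times S^2$. The key mechanism is the correspondence (noted in the excerpt, via the monodromy algorithm of \cite{cgp}) between a path $\delta$ valid with respect to an arced relative trisection diagram $\D_A$ and the process of sliding a cut system of arcs $A_\alpha$ in the $\alpha$-page $\Sigma_\alpha$ through the $\beta$- and $\gamma$-pages and back to $\Sigma_\alpha$, ending at $\phi(A_\alpha)$ where $\phi$ is the monodromy. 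The type $0^\partial$ edges of $\delta$ are precisely the arc slides; the type $0$ and type $1$ edges only move closed curves and do not change which arcs appear.

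First I would pick a relative trisection diagram $\D$ of $\T$ and a valid path $\delta$ realizing $\rL^\partial(\D)\le 1$, together with the arced diagram $\D_A=(\Sigma;\alpha,\beta,\gamma;A_\alpha,A_\beta,A_\gamma)$ representing $\delta$. Let $v_1,\dots,v_7$ be the distinguished vertices. Along the subpath from $v_1$ to $v_7$, every edge is either type $0$, type $0^\partial$, or type $1$; by hypothesis at most one edge is type $0^\partial$. Now observe that traversing a type $0$ or type $1$ edge replaces a closed curve in the cut system but leaves every arc untouched, while a type $0^\partial$ edge replaces a single arc $a_1$ by a disjoint arc $a_1'$ with the same endpoints. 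Hence the arc system of $v_1$ (which is $A_\alpha$) and the arc system of $v_7$ (which, by the monodromy-algorithm interpretation, is $\phi(A_\alpha)$ up to isotopy) differ by at most one such arc-slide move. So $\phi$ is isotopic to a homeomorphism of $\Sigma_\alpha$ that either fixes the cut system $A_\alpha$ of arcs entirely, or fixes all but one arc and sends that remaining arc to a disjoint arc with the same endpoints.

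Next I would argue that in either case $\phi$ is isotopic to the identity rel $\partial \Sigma_\alpha$. If $\phi$ fixes $A_\alpha$ setwise (hence, after composing with a permutation and the fact that these arcs cut $\Sigma_\alpha$ into a disk, rel the cut locus), then $\phi$ is determined by its action on the complementary disk, which is trivial; one must check the bookkeeping about the marked points on $\partial\Sigma$ to see there is no Dehn twist along the boundary, but this follows because the valid path has no "extra" type $1$ edges to account for boundary twisting (the normalization in $\rL$ would detect such a twist). If instead exactly one arc $a_1$ is replaced by a disjoint arc $a_1'$ with $\partial a_1=\partial a_1'$, then $a_1\cup a_1'$ bounds, so replacing $a_1$ by $a_1'$ is realized by an isotopy of $\Sigma_\alpha$; composing $\phi$ with this isotopy returns to the previous case. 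Either way $\O_\T$ is the trivial open book on the surface of genus $p$ with $b$ boundary components, whose total space is $\#_{2p+b-1}(S^1\times S^2)$ (the rank being $b_1$ of a genus-$p$ surface with $b$ boundary components, namely $2p+b-1$), giving $\partial X\cong \#_{2p+b-1}S^1\times S^2$.

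The main obstacle I anticipate is the last-paragraph step: carefully justifying that "the arc system changes by at most one arc-slide move" genuinely implies the monodromy is isotopic to the identity rel boundary, rather than merely isotopic to the identity after forgetting the marked points on $\partial\Sigma$ (which would allow boundary Dehn twists and hence a different $3$-manifold). This requires tracking the marked boundary points through the monodromy algorithm and invoking the precise normalization in the definition of $\rL^\partial$; I would handle it by noting that a boundary twist in $\phi$ would necessitate additional moves in any valid path, incompatible with $\rL^\partial(\T)\le 1$ together with the length normalization in $\rL$. A secondary technical point is checking that the type $0^\partial$ edge, if present, can indeed be assumed to be the only place where the arcs change — i.e. that type $0$ and type $1$ edges truly never alter arcs — which is immediate from Definition~\ref{def:HTpComplex}.
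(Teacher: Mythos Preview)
Your setup and the case of zero type $0^\partial$ edges match the paper. The gap is in the one-edge case: the assertion that ``$a_1\cup a_1'$ bounds, so replacing $a_1$ by $a_1'$ is realized by an isotopy of $\Sigma_\alpha$'' is not justified and is false as stated. Two disjoint properly embedded arcs with the same endpoints in a surface need not cobound a disk; for instance, on an annulus a straight arc and its image under a Dehn twist about the core are disjoint with common endpoints but not isotopic rel endpoints. Your proposed fallback---ruling out a hidden boundary Dehn twist by appealing to the length normalization in $\rL$---does not apply, because the hypothesis bounds only $\rL^\partial(\T)$ (the number of type $0^\partial$ edges), not $\rL(\T)$; the valid path is allowed to have arbitrarily many type $0$ and type $1$ edges, so no normalization argument constrains the monodromy here.

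The paper closes this gap by a different, homological observation rather than a direct isotopy. The single type $0^\partial$ edge exhibits $a_1'$ as obtained from $a_1$ by sliding over disjoint closed curves $C_1,\ldots,C_n$. Since both $A_\alpha$ and $A_{\overline{\alpha}}$ lie in $\Sigma\setminus\alpha$, the class $[C_1]+\cdots+[C_n]$ lands in the span of the $\alpha$ curves in $H_1(\Sigma)$, so the same change $a_1\mapsto a_1'$ can be realized by slides of $A_{\overline{\alpha}}$ over $\alpha$ curves. In the monodromy algorithm such $\alpha$--slides are exactly the moves permitted in the final step and do not alter the resulting monodromy; hence $\phi$ is trivial. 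You should replace the ``cobounds a disk'' claim with this argument.
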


\begin{proof}
 Let $\D_A=(\Sigma;\alpha,\beta,\gamma;A_\alpha, A_\beta, A_\gamma)$ be an arced relative trisection diagram for $\T$ and $\delta$ a path in $HT_p(X)$ valid with respect to $\D_A$, so that $(\D_A,\delta)$ achieves $\rL^\boundary(\T)$. Then $\delta$ includes at most one type 0$^{\boundary}$ edge. Let $A_{\overline{\alpha}}$ be the arcs corresponding to the final vertex of $\delta$, so $\alpha \cup A_\alpha$ and $\alpha\cup A_{\overline{\alpha}}$ are cut systems for $\Sigma$. Since $\delta$ includes at most one type $0^\partial$ edge, then $A_\alpha$ and $A_{\overline{\alpha}}$ are equal or differ by a sliding one arc over some disjoint closed curves.

If $A_\alpha$ and $A_{\overline{\alpha}}$ are equal, then from the monodromy algorithm we find that the relative trisection induced by $\D$ has trivial monodromy. Then $\partial X\cong \#_{2p+b-1} S^1\times S^2$ as desired.

On the other hand, suppose that $A_{\overline{\alpha}}$ is obtained from $A_{\alpha}$ by sliding one arc over disjoint closed curves $C_1,\ldots, C_n$ (which themselves are disjoint from $A_\alpha, A_{\overline{\alpha}}$). Since $A_{\alpha},A_{\overline{\alpha}}$ are disjoint from $\alpha$, $[C_1]+\cdots+[C_n]$ is in the span of the $\alpha$ curves in $H_1(\Sigma)$. Therefore, there exist slides over $\alpha$ curves taking $A_{\overline{\alpha}}$ to $A_{\alpha}$, so we again conclude from the monodromy algorithm that $\partial X$ admits an open book with trivial monodromy, so $\partial X\cong \#_{2p+b-1} S^1\times S^2$.

\end{proof}

The argument of Proposition~\ref{L:0not1} yields some insight into slightly larger values of $\rL^\partial(X)$.

\begin{theorem}\label{thm:boundarysmall}
Let $\T$ be a $(g,k;p,b)$-relative trisection of 4-manifold $X$ with $\rL^\boundary(\T)<2(2p+b-1)$ and $2p+b-1>0$. Then $\partial X\cong M\#(S^1\times S^2)$ for some 3-manifold $M$.
\end{theorem}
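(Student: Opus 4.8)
The plan is to locate, inside a near-optimal valid path, one arc of the cut systems whose image under the monodromy of $\O_\T$ is forced to be isotopic to itself, and then to split off the $S^1\times S^2$ summand by recognizing $\O_{\boundary X}$ as a plumbing with a trivial annular open book. Write $l=2p+b-1\ge 1$. Since $\rL^\boundary(\T)<2l$, choose a diagram $\D$ of $\T$, an arced extension $\D_A=(\Sigma;\alpha,\beta,\gamma;A_\alpha,A_\beta,A_\gamma)$, and a path $\delta$ valid with respect to $\D_A$ with fewer than $2l$ type $0^\boundary$ edges. Each type $0^\boundary$ edge changes exactly one arc of the current cut system and leaves all the closed curves and the other $l-1$ arcs fixed; thus it records a single ``move'' applied to one arc, and since there are $l$ arcs and fewer than $2l$ moves in total, some arc is moved at most once. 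Tracking this arc from the first vertex, it is an arc $a\in A_\alpha$. Because the closed curves of every cut system are disjoint from $a$, and the complement of a cut system in $\Sigma$ is a (connected) disk, $a$ is non-separating in $\Sigma$ and hence non-separating in the page $\Sigma_\alpha$. Recall that running the monodromy algorithm of Section~\ref{sec:monodromy} along $\delta$ produces a representative $\phi$ of the monodromy of $\O_\D=\O_\T$ for which $\phi(a)$ is precisely the arc corresponding to $a$ at the last vertex of $\delta$.

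Next I would show that $\phi(a)$ is isotopic to $a$ in $\Sigma_\alpha$. If $a$ is never moved this is immediate, with $\phi(a)=a$. If $a$ is moved exactly once, say at the edge of $\delta$ joining consecutive vertices $w$ and $w'$, put $a':=\phi(a)$. Since a type $0^\boundary$ move fixes everything but that one arc, both $a$ and $a'$ are disjoint from the collection $\mathcal{C}$ consisting of the closed curves and the remaining $l-1$ arcs of the cut system at $w$, and both $\mathcal{C}\cup\{a\}$ and $\mathcal{C}\cup\{a'\}$ are cut systems for $\Sigma$. Cutting $\Sigma$ along $\mathcal{C}$ therefore yields an annulus in which $a$ and $a'$ are both essential arcs (each has disk complement), with the same endpoints and disjoint interiors; two such arcs in an annulus are isotopic rel endpoints, so $a\cup a'$ bounds a disk $\Delta$ in $\Sigma$. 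As $\Delta$ meets $\alpha$ only in closed curves, an innermost-disk argument lets us isotope $\alpha$ off $\Delta$, whence $a\simeq a'$ in $\Sigma\setminus\alpha\subset\Sigma_\alpha$.

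Having arranged $\phi(a)\simeq a$ in $\Sigma_\alpha$, isotope $\phi$ rel $\boundary\Sigma_\alpha$ to fix $a$ pointwise and then to fix a band neighborhood $N(a)$ pointwise; this does not change $\O_\T$. Cutting $\Sigma_\alpha$ along $a$ gives a connected surface $P':=\Sigma_\alpha\setminus\mathring{N}(a)$ carrying the restricted automorphism $\phi|_{P'}$, and $(\Sigma_\alpha,\phi)$ is exactly the Murasugi sum (plumbing) of $(P',\phi|_{P'})$ with the trivial annular open book $(\text{annulus},\id)$ of $S^1\times S^2$, the plumbing $4$-gon being a neighborhood of $a$. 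Applying Theorem~\ref{thm:murasugi} to $X$ and $S^1\times B^3$ (equivalently, using that a Murasugi sum of open books of $M_1$ and $M_2$ supports $M_1\#M_2$), we conclude that the $3$-manifold carried by $(\Sigma_\alpha,\phi)$, namely $\boundary X$, is $M\#(S^1\times S^2)$ with $M$ the total space of $(P',\phi|_{P'})$.

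I expect the ``moved once'' step of the second paragraph to be the main obstacle: one must be sure that $a$ and $a'$ genuinely lie disjointly in a common annulus embedded in $\Sigma$, so that they are isotopic there — and hence in $\Sigma_\alpha$ — rather than merely obtaining an isotopy inside an intermediate page $\Sigma_\beta$ or $\Sigma_\gamma$. The feature that makes this go through is exactly that a type $0^\boundary$ move leaves every closed curve and every other arc untouched, so $a$ and $a'$ complete the identical partial cut system $\mathcal{C}$; this, together with the elementary fact that essential disjoint arcs with common endpoints in an annulus are isotopic rel endpoints, is the crux. (The argument is a quantitative refinement of the reasoning in Lemma~\ref{L:0not1}.)
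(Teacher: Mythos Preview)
Your second paragraph contains a genuine gap. The claim that cutting $\Sigma$ along $\mathcal{C}$ (the $g-p$ closed curves and the $l-1$ other arcs at $w$) yields an annulus is false: an Euler characteristic count gives $\chi(\Sigma\setminus\mathcal{C})=\chi(\Sigma)+(l-1)=-2(g-p)$, so $\Sigma\setminus\mathcal{C}$ is a planar surface with $2(g-p)+2$ boundary components, not an annulus. (The paper's phrase ``complement is a disk'' is imprecise; what is true is that the arcs cut the \emph{page} $\Sigma_\alpha$ to a disk.) In this multi--holed sphere, two arcs $a,a'$ satisfying the type $0^\boundary$ conditions need not cobound a disk: the region between them can trap some of the $2(g-p)$ extra boundary circles coming from the closed curves of $\mathcal{C}$. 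So you do not get a disk $\Delta\subset\Sigma$, and the innermost--disk step never gets off the ground.

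Even if you repair this by first surgering along the closed curves $\mu$ of $\mathcal{C}$ (so that cutting the remaining arcs really does give an annulus and your framing argument yields $a\simeq a'$ in $\Sigma_\mu$), you still need $a\simeq a'$ in $\Sigma_\alpha$. When the single type $0^\boundary$ edge lies in $\Gamma_\alpha$ (or, by reversing $\delta$, in $\Gamma_\gamma$) you have $\mu$ slide--equivalent to $\alpha$, so $\Sigma_\mu=\Sigma_\alpha$ and you are done. But if the edge lies in $\Gamma_\beta$ then $\mu$ is slide--equivalent to $\beta$, and an isotopy in $\Sigma_\beta$ does not immediately give one in $\Sigma_\alpha$. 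This is exactly the case the paper isolates (its Case~3): there one puts $(\alpha,\beta)$ in standard position and uses that $a,a'$ are disjoint from $\alpha$ to show the $\beta$--slides involved are homologically in the span of $\alpha$, whence $a\simeq a'$ in $\Sigma_\alpha$. Your uniform ``moved once'' argument skips this case analysis, and that is where it breaks.
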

\begin{proof}
Let $\D_A=(\Sigma;\alpha,\beta,\gamma;A_\alpha,A_\beta,A_\gamma)$ be an arced relative trisection diagram for $\T$ with valid path $\delta$ in $HT_p(\Sigma)$ so that $\delta$ has less than $2(2p+b-1)$ type $0^\partial$ edges. Say that an arc $a$ ``changes to $a'$
during $\delta$" if there is a type 0$^\boundary$ edge in $\delta$ corresponding to replacing $a$ with another arc $a'$. Continue identifying $a$ with $a'$ and if a subsequent type 0$^\boundary$ edge in $\delta$ changes $a'$, say $a$ ``changes again" during $\delta$.

Since $A_\alpha$ contains $2p+b-1$ arcs, there is some arc $a_0$ in $A_\alpha$ which changes at most once in $\delta$.

{\bf{Case 1. $a_0$ never changes.}}
Then the monodromy of the open book induced by $\T$ on $\partial X$ fixes an essential arc (up to isotopy). We conclude $\partial X$ admits an $S^1\times S^2$ summand.



{\bf{Case 2. $a_0$ changes to arc $b_0$ in $A_\beta$.}} 

Then $b_0$ is obtained from $a_0$ by slides over $\alpha$ curves. We again conclude that up to isotopy, the monodromy of the induced open book on $\partial X$ fixes an essential arc.


{\bf{Case 3. $a_0$ changes to arc $c_0$ in $A_\gamma$ (which is not also in $A_\beta$).}}
Then $c_0$ is obtained from $a_0$ by slides over $\beta$ curves.

Slide $\alpha,a_0,c_0$ and separately slide $\beta$ to turn $\alpha,\beta$ into a standard pair $\alpha',\beta'$. This may involve sliding $c_0$ over $\alpha$ curves, but we abuse notation and still refer to the obtained arc as $c_0$. Now $c_0$ can be obtained from sliding $a_0$ over some collection of $\beta'$ curves. Moreover, $a_0$ and $c_0$ are disjoint from $\alpha'$. Since $\alpha',\beta'$ are standard, we conclude that the union of curves over which we slide $a_0$ are in the span of $\alpha'$ in $H_1(\Sigma)$, so $a_0$ and $c_0$ are isotopic in the $\alpha$-page of $\Sigma$. Thus, we conclude that up to isotopy, the monodromy of the induced open book on $\partial X$ fixes an essential arc.

{\bf{Case 3. $a_0$ changes to arc $c_0$ in $A_{\overline{\gamma}}$ (which is not also in $A_\gamma$).}}
By replacing $X$ with $-X$, exchanging $\beta$ and $\gamma$, and reversing the direction of $\gamma$, we find ourselves in Case 2.

\end{proof}

\begin{corollary}\label{cor:exsharp}
The relative trisection $\T$ in Example~\ref{example} has $\rL(\T)=\rL^\boundary(\T)=6$.
\end{corollary}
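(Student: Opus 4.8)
\textbf{Proof proposal for Corollary~\ref{cor:exsharp}.}

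The plan is to establish the matching lower bounds $\rL^\boundary(\T)\ge 6$ and $\rL(\T)\ge 6$; combined with the upper bounds $\rL(\D)\le 6$ and $\rL^\boundary(\D)\le 6$ computed in Example~\ref{example}, and the obvious inequality $\rL^\boundary(\T)\le\rL^\boundary(\D)$, $\rL(\T)\le\rL(\D)$, this forces equality. Since $W\cong\overline{\CP}^2\setminus\mathring B^4$ has $\boundary W\cong S^3$, and $S^3$ has no $S^1\times S^2$ summand, Theorem~\ref{thm:boundarysmall} applies in its contrapositive form: if $\T$ is a $(g,k;p,b)$-relative trisection of $X$ with $2p+b-1>0$ and $\partial X$ admits \emph{no} $S^1\times S^2$ summand, then $\rL^\boundary(\T)\ge 2(2p+b-1)$. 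For the relative trisection $\T$ in Example~\ref{example} we have $(g,k;p,b)=(4,3;0,4)$, so $2p+b-1=3$, and hence $\rL^\boundary(\T)\ge 6$. Together with $\rL^\boundary(\T)\le\rL^\boundary(\D)\le 6$ this gives $\rL^\boundary(\T)=6$.

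For $\rL(\T)$, first recall the general inequality $\rL(\T)\ge\rL^\boundary(\T)$, which holds because any valid path $\delta$ realizing $\rL(\T)$ has $|\delta|-3(g+p+b-1)+(k_1+k_2+k_3)$ at least as large as the number of type $0^\boundary$ edges in $\delta$ — indeed $|\delta|$ counts all edges, $3(g+p+b-1)-(k_1+k_2+k_3)$ is the forced number of type $1$ edges, so the normalized length is (number of type $0$ edges) $+$ (number of type $0^\boundary$ edges) $+$ (excess type $1$ edges) $\ge$ (number of type $0^\boundary$ edges). Hence $\rL(\T)\ge\rL^\boundary(\T)=6$, and combined with $\rL(\T)\le\rL(\D)\le 6$ we conclude $\rL(\T)=6$.

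I expect the only subtlety to be making sure the hypotheses of Theorem~\ref{thm:boundarysmall} are genuinely met: we need $2p+b-1>0$ (true here since $2p+b-1=3$) and we need $\boundary X\cong\boundary W\cong S^3$ to admit no $S^1\times S^2$ connected summand (true by uniqueness of prime decomposition, as $S^3$ is prime and not $S^1\times S^2$). One should also double-check that the upper bounds from Example~\ref{example} are correctly stated for the diagram $\D$ and that $\rL^\circ(\D)=0$ does not interfere — it does not, since $\rL^\circ$ plays no role in this lower-bound argument. Thus the corollary follows by sandwiching both quantities between $6$ and $6$.
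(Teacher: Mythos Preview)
Your argument is correct and is precisely the proof the paper has in mind: the corollary is placed immediately after Theorem~\ref{thm:boundarysmall} with no explicit proof, and Example~\ref{example} already signals that the equality follows ``using the fact that $\boundary W$ is a $3$-sphere.'' You have supplied exactly that sandwich argument, combining the upper bound from the explicit path in Example~\ref{example} with the lower bound $\rL^\boundary(\T)\ge 2(2p+b-1)=6$ from the contrapositive of Theorem~\ref{thm:boundarysmall}, together with the observation $\rL(\T)\ge\rL^\boundary(\T)$.

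One very minor remark: in a valid path the segments lying in $\Gamma_\alpha,\Gamma_\beta,\Gamma_\gamma$ consist only of type $0$ and type $0^\boundary$ edges, and the good-pair segments contain \emph{exactly} the forced number of type $1$ edges, so in fact there are no ``excess type $1$ edges.'' Your inequality $\rL(\T)\ge\rL^\boundary(\T)$ is therefore even cleaner than you stated --- the normalized length equals $(\#\,\text{type }0)+(\#\,\text{type }0^\boundary)$ on the nose. This does not affect the argument.
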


Theorem~\ref{thm:boundarysmall} allows us to construct 4-manifolds with arbitrarily large relative-$\L$ invariant.

\begin{corollary}\label{cor:arbbig}
There exist 4-manifolds $X_1,X_2,\ldots$ so that $\rL(X_n)\ge n$ for each $n\in\N$.
\end{corollary}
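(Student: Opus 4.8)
The plan is to apply Theorem~\ref{thm:boundarysmall} in its contrapositive form. That theorem says that if $X$ has a $(g,k;p,b)$-relative trisection $\T$ with $2p+b-1>0$ and $\rL^\boundary(\T) < 2(2p+b-1)$, then $\boundary X$ splits off an $S^1\times S^2$ summand. Equivalently, if $\boundary X$ admits \emph{no} $S^1\times S^2$ summand (for instance, if $\boundary X$ is an irreducible $3$-manifold that is not $S^1\times S^2$, or more simply if $H_1(\boundary X;\Q)=0$ while $\boundary X\neq S^3$, or if $\boundary X$ is a hyperbolic $3$-manifold), then \emph{every} relative trisection $\T$ of $X$ with $2p+b-1>0$ satisfies $\rL^\boundary(\T)\ge 2(2p+b-1)$. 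The remaining subtlety is that a priori $X$ might also admit relative trisections with $p=0$, $b=1$ (so $2p+b-1=0$), to which Theorem~\ref{thm:boundarysmall} does not apply; but such a trisection induces an open book with connected binding and one-page complement structure, hence $\boundary X$ is automatically obtained by filling, and in particular $\boundary X$ would bound in a controlled way — more to the point, a $(g,k;0,1)$-relative trisection exists only when $\boundary X$ is obtained from $S^3$ by the monodromy algorithm on a planar page with connected binding, which constrains $\boundary X$. To sidestep this entirely, I would instead choose the $X_n$ so that every relative trisection of $X_n$ is forced to have $2p+b-1$ large; the cleanest route is to bound $2p+b-1$ below by a homological invariant of $\boundary X_n$.

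Concretely, here is the key observation I would use: if $X$ admits a $(g,k;p,b)$-relative trisection, then $\boundary X$ admits an open book whose page has first Betti number $2p+b-1$ (the page $\Sigma_\alpha$ is a genus-$p$ surface with $b$ boundary components), and the monodromy is a product of the generators produced by the monodromy algorithm. The rank of $H_1(\boundary X)$ is therefore at most $2p+b-1$ — indeed $H_1(\boundary X;\Q)$ is a quotient of $H_1(\text{page};\Q)\cong\Q^{2p+b-1}$ by the image of $(\phi_*-\mathrm{id})$ plus the binding relations. So $b_1(\boundary X)\le 2p+b-1$ for every relative trisection of $X$. Hence if I pick $Y_n$ to be a closed, oriented, irreducible $3$-manifold (not $S^1\times S^2$) with $b_1(Y_n)\ge n$ — for instance $Y_n = \#^n (S^1\times S^2)$ won't do since it is reducible with $S^1\times S^2$ summands, so instead take $Y_n$ to be the $n$-torus $T^3$ connect-summed appropriately, or better, a surface bundle over $S^1$ with $b_1$ large and no $S^1\times S^2$ summand, e.g. $\Sigma_n\times S^1$ — then \emph{any} relative trisection $\T$ of any $4$-manifold $X_n$ with $\boundary X_n = Y_n$ has parameters with $2p+b-1\ge n\ge 1$. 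Such $X_n$ exists by Gay--Kirby's existence theorem. Since $Y_n$ has no $S^1\times S^2$ summand (it is irreducible, or has no such summand in its prime decomposition), Theorem~\ref{thm:boundarysmall} forces $\rL^\boundary(\T)\ge 2(2p+b-1)\ge 2n$ for every such $\T$, hence $\rL^\boundary(X_n)\ge 2n$. Finally $\rL(\T)\ge \rL^\boundary(\T)$ (a valid path's type-$0^\boundary$ edges are among its edges, and the normalizing constant only subtracts off type-$1$ edges, so minimizing gives $\rL(\T)\ge\rL^\boundary(\T)$ — this is noted in the Remark following Example~\ref{example}), so $\rL(X_n)\ge\rL^\boundary(X_n)\ge 2n\ge n$.

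The main obstacle is verifying that the $3$-manifolds $Y_n$ chosen genuinely have no $S^1\times S^2$ summand while having $b_1\ge n$; for $\Sigma_n\times S^1$ (with $\Sigma_n$ a closed genus-$n$ surface) this is standard — it is aspherical, hence irreducible, hence prime and not $S^1\times S^2$, and $b_1(\Sigma_n\times S^1)=2n+1$. A second, more minor obstacle is the clean statement of the inequality $b_1(\boundary X)\le 2p+b-1$; I would either cite it from the open book literature or observe directly that $H_1$ of a $3$-manifold with an open book is generated by the $1$-cycles of a page. With $Y_n=\Sigma_n\times S^1$ and $X_n$ any $4$-manifold it bounds, the argument above gives $\rL(X_n)\ge 2n\ge n$, completing the proof.

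\begin{proof}
Fix $n\in\N$. Let $Y_n=\Sigma_n\times S^1$, where $\Sigma_n$ is the closed orientable surface of genus $n$. Since $Y_n$ is aspherical it is irreducible, so in particular $Y_n$ has no $S^1\times S^2$ summand; moreover $b_1(Y_n)=2n+1$. By the existence theorem of Gay and Kirby, there is a compact orientable $4$-manifold $X_n$ with $\boundary X_n\cong Y_n$ admitting a relative trisection.

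Let $\T$ be any $(g,k;p,b)$-relative trisection of $X_n$. The induced open book $\O_\T$ on $\boundary X_n$ has page $\Sigma_\alpha$, a surface of genus $p$ with $b$ boundary components, so $H_1(\Sigma_\alpha;\Q)\cong\Q^{2p+b-1}$. Since $H_1(\boundary X_n;\Q)$ is generated by the images of $1$-cycles on a single page, we get
\[
2p+b-1\ \ge\ b_1(\boundary X_n)\ =\ b_1(Y_n)\ =\ 2n+1\ \ge\ 1.
\]
In particular $2p+b-1>0$, so Theorem~\ref{thm:boundarysmall} applies: since $\boundary X_n\cong Y_n$ admits no $S^1\times S^2$ summand, we must have $\rL^\boundary(\T)\ge 2(2p+b-1)\ge 2(2n+1)\ge 2n$.

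Finally, for any valid path $\delta$ with respect to a relative trisection diagram of $\T$, every type $0^\boundary$ edge of $\delta$ is in particular an edge of $\delta$, while the normalization constant $3(g+p+b-1)-(k_1+k_2+k_3)$ counts only type $1$ edges; hence $\rL(\T)\ge\rL^\boundary(\T)$. Therefore $\rL(X_n)=\min_\T\rL(\T)\ge 2n\ge n$, as desired.
\end{proof}
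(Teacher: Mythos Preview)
Your proof is correct and follows the same overall strategy as the paper: bound $2p+b-1$ from below by a homological invariant of $\partial X_n$, observe that $\partial X_n$ has no $S^1\times S^2$ summand, and then apply Theorem~\ref{thm:boundarysmall} together with the elementary inequality $\rL\ge\rL^\partial$.

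The only substantive difference is the choice of boundary $3$-manifold. The paper takes $\partial X_n=\#_n L(2,1)$, using that $H_1(\partial X_n;\Z)\cong(\Z/2)^n$ needs at least $n$ generators (via the genus-$(2p+b-1)$ Heegaard splitting coming from the open book), so $2p+b-1\ge n$; the absence of an $S^1\times S^2$ summand is immediate for connected sums of lens spaces. You instead take $\partial X_n=\Sigma_n\times S^1$ and use $b_1(\partial X_n)=2n+1\le 2p+b-1$, with asphericality giving irreducibility. Both work. One thing the paper's choice buys is that the resulting $\partial X_n$ are rational homology spheres, which the authors later contrast with Corollary~\ref{cor:arbbigtri} (where $\rL^\partial$ is made large while keeping $b_1(\partial X)$ bounded); your examples already have large $b_1$, so they would not serve that later comparison, but for the bare statement of Corollary~\ref{cor:arbbig} your choice is perfectly fine and arguably more elementary. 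Two small cosmetic points: your reference to ``the Remark following Example~\ref{example}'' for $\rL\ge\rL^\partial$ is off (that remark states $\rL\le\rL^\circ+\rL^\partial$), though your own one-line justification of the inequality is correct; and for the existence of $X_n$ you can simply take $X_n=\Sigma_n\times D^2$ rather than invoking Gay--Kirby.
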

\begin{proof}
Let $X_0$ be the $4$-manifold obtained from $B^4$ by attaching a $2$-framed $2$-handle along an unknot in $S^3$, so $\boundary X_1\cong L(2,1)$. Let $X_n\cong\natural_n X_1$.

Suppose $X_n$ admits a $(g,k;p,b)$-relative trisection. Then $\boundary X_n$ admits an open book with genus-$p$ pages and $b$ binding components. This implies that $H_1(\boundary X_n;\Z)$ admits a presentation with $2p+b-1$ generators. Since $H_1(\boundary X_n;\Z)\cong\oplus_n\Z/2$, we then have $2p+b-1\ge n$. Since $\partial X_n\cong\#_n L(2,1)$ does not admit an $S^1\times S^2$ summand, Theorem~\ref{thm:boundarysmall} implies $\rL(X_n)\ge\rL^\boundary(X_n)\ge 2n$.
\end{proof}

Now we deal with the global topology of the 4-manifold.



\begin{theorem}\label{thm:b4}
Let $X^4$ be a rational homology ball with $\rL(X^4)=0$. Then $X^4$ is diffeomorphic to $B^4$.
\end{theorem}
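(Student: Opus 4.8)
The plan is to show that $\rL(X)=0$ forces a relative trisection diagram of $X$ of an extremely restricted form, and then argue that such a diagram can only describe $B^4$. First I would take a relative trisection $\T$ of $X$ and an arced diagram $\D_A=(\Sigma;\alpha,\beta,\gamma;A_\alpha,A_\beta,A_\gamma)$ together with a valid path $\delta$ in $HT_p(\Sigma)$ realizing $\rL(\T)=0$. Since the normalization constant $3(g+p+b-1)-(k_1+k_2+k_3)$ is exactly the number of type 1 edges forced to appear in $\delta$, having $\rL(\D_A)=0$ means $\delta$ contains no type 0 edges, no type $0^\partial$ edges, and exactly the minimal number of type 1 edges — the three ``good pair'' segments between $v_2v_3$, $v_4v_5$, $v_6v_7$ each consist of precisely $g+p+b-k_i-1$ type 1 edges and nothing else. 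In particular $\rL^\partial(\T)=0$, so by Proposition~\ref{L:0not1} we already know $\partial X\cong \#_{2p+b-1}S^1\times S^2$; but $X$ is a rational homology ball, so $b_1(\partial X)=0$, forcing $2p+b-1=0$, i.e. $p=0$, $b=1$. Thus $\partial X\cong S^3$ and $\O_\T$ is the trivial open book on $S^3$ with disk pages.

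Next I would use the absence of type 0 edges to pin down the diagram. Because no slides are allowed, the cut systems at $v_1,v_3,v_5,v_7$ must literally be $\alpha\cup A_\alpha$ (resp. $\beta\cup A_\beta$, $\gamma\cup A_\gamma$, $\alpha\cup A_{\overline\alpha}$), and $A_\alpha=A_\beta=A_\gamma=A_{\overline\alpha}$ since there are no type $0^\partial$ edges; moreover with $p=0,b=1$ the arc set is empty, so we are really working in the ordinary cut complex $HT(\Sigma)$ and $\delta$ is a genuine loop based at $v_\alpha$ hitting $v_\beta$ and $v_\gamma$, of length $3g-k_1-k_2-k_3$, consisting only of type 1 edges, with each of the three arcs $v_\alpha\to v_\beta$, $v_\beta\to v_\gamma$, $v_\gamma\to v_\alpha$ a shortest (good) type 1 path. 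This is precisely the statement that $\L_{X,\T}=0$ for the closed trisection... except $X$ is not closed. The right way to phrase it: $\D=(\Sigma;\alpha,\beta,\gamma)$ with $p=0,b=1$, capping $\partial\Sigma$ with a disk gives a closed trisection diagram $\widehat\D=(\widehat\Sigma;\alpha,\beta,\gamma)$ of a closed $4$-manifold $\widehat X$ with $X=\widehat X\setminus\mathring B^4$ (since $\partial X\cong S^3$), and the loop $\delta$ is a valid loop for $\widehat X$ of the minimal possible length, so $\L(\widehat X)=0$. Since $X$ is a rational homology ball, $\widehat X$ is a rational homology sphere, and by the Kirby--Thompson theorem (quoted in the excerpt after Theorem~\ref{thm:b4}) $\L(\widehat X)=0$ implies $\widehat X\cong S^4$, hence $X\cong B^4$.

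The step I expect to be the main obstacle is the reduction ``$\rL(\T)=0 \Rightarrow$ the diagram comes from capping off a closed diagram with a genuinely valid closed loop,'' i.e. justifying carefully that once $p=0,b=1$ and $\delta$ has no type $0/0^\partial$ edges, the combinatorial data of $\delta$ in $HT_0(\Sigma)$ transfers to a valid loop in $HT(\widehat\Sigma)$ of length realizing $\L(\widehat X)=0$ — in particular that the ``good pair'' condition in Definition~\ref{def:relvalid} matches the closed ``good pair'' condition of Definition~\ref{def:unnormalizedL} after capping, and that filling in the puncture does not change which pairs of curves form Heegaard diagrams of $\#_{k_i}S^1\times S^2$. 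One also needs that a $(g,k;0,1)$-relative trisection of $X$ really is obtained from a genus-$g$ closed trisection of $X\cup B^4$ by removing a ball centered on the trisection surface; this is standard (it is the content of Proposition~\ref{lem:puncture}'s proof run in reverse) but should be stated. Modulo this bookkeeping, the argument is: small $\rL$ $\Rightarrow$ $\partial X=S^3$ and a minimal closed loop $\Rightarrow$ $\L(\widehat X)=0$ $\Rightarrow$ $\widehat X=S^4$ $\Rightarrow$ $X=B^4$.
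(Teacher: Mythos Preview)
Your argument is correct, but it follows a genuinely different route from the paper's. Both proofs begin the same way: from $\rL(\T)=0$ one sees the valid path $\delta$ consists solely of type~1 edges, so in particular $\rL^\partial(\T)=0$; Lemma~\ref{L:0not1} then gives $\partial X\cong\#_{2p+b-1}S^1\times S^2$, and the rational homology ball hypothesis forces $p=0$, $b=1$. At this point the two arguments diverge.

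You cap the single boundary circle of $\Sigma$ by a disk, observe that the resulting closed diagram $\widehat\D$ describes $\widehat X=X\cup_{S^3}B^4$ (this is the gluing theorem applied to the $(0,0;0,1)$ trisection of $B^4$, or equivalently the reverse of Proposition~\ref{lem:puncture}), and that $\delta$ becomes a valid loop of minimal length in $HT(\widehat\Sigma)$, so $\L(\widehat X)=0$. Then you invoke the Kirby--Thompson theorem for closed rational homology spheres to conclude $\widehat X\cong S^4$, hence $X\cong B^4$. The bookkeeping you flag as the ``main obstacle'' (matching the good-pair counts $g+p+b-k_i-1=g-k_i$ and checking that capping preserves the Heegaard conditions) is routine.

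The paper, by contrast, does \emph{not} appeal to Kirby--Thompson. After reaching $p=0$, $b=1$, it gives a direct inductive destabilization argument: whenever $k_1>0$ there is a parallel $\alpha$/$\beta$ pair, and one shows the diagram can be destabilized along a dual $\gamma$ curve while preserving a type-1-only path of the correct length. Iterating drives $k_1$ (and then $k_2$) to zero, at which point the $\alpha$ and $\gamma$ curves determine the same handlebody and the diagram is visibly a stabilized $(0,0;0,1)$ diagram of $B^4$. The payoff is that the paper then obtains the closed Kirby--Thompson theorem as a \emph{corollary} of Theorem~\ref{thm:b4}, so the relative statement is made logically prior to and independent of the closed one. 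Your proof is shorter and perfectly valid as a proof of the theorem as stated, but it reverses this logical dependence.
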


\begin{proof}
Let $(\D_A,\delta)$ be a pair achieving $\rL(X^4)$. That is, find a $(g,k;p,b)$-relative trisection diagram $\D_A=(\Sigma;\alpha,\beta,\gamma;A_{\alpha},A_{\beta},A_{\gamma})$ for $X$ and path $\delta$ in $HT_p(\Sigma)$ valid with respect to $\D_A$ so that $|\delta|=3(g+p+b-1)-(k_1+k_2+k_3)$. Then $\delta$ consists only of type 1 edges. 
By Lemma~\ref{L:0not1}, we have $\boundary X^4\cong\#_{2p+b-1}(S^1\times S^2)$. Since $\boundary X^4$ is a rational homology 3-sphere, we must have $2p+b-1=0$, so $p=0$ and $b=1$. Thus, $A_{\alpha},A_{\beta},A_{\gamma}$ are all empty. We now write $\D=(\Sigma;\alpha,\beta,\gamma)$.

We wish to proceed by induction; to make the proof easier we weaken our knowledge of $\delta$. Let $v_\alpha, v_\beta, v_\gamma$ be vertices in $HT_0(S)$ corresponding to the $\alpha, \beta, \gamma$ curves (respectively) in $\D$, so these vertices appear in order in $\delta$ (with $v_{\alpha}$ as the first vertex of $\delta$). Restrict $\delta$ to the subinterval of $\delta$ from $v_{\alpha}$ to $v_{\gamma}$. From now on, the only properties we will assume of $\delta$ are that $\delta$ is a path in $HT_p(\Sigma)$ from $v_{\alpha}$ to $v_{\beta}$ to $v_{\gamma}$ consisting of exactly $2(g+p+b-1)-(k_1+k_2)$ type 1 edges.



 The segment of $\delta$ between $v_{\alpha}$ and $v_{\beta}$ consists of $g-k_1$ type 1 edges. If $v_1,\ldots, v_{g-k_1+1}$ are the vertices of $\delta$ (in order) between $v_{\alpha},v_{\beta}$, with $v_1=v_{\alpha},v_{g-k_1+1}=v_{\beta}$, then the cut system corresponding to $v_{i+1}$ differs from that of $v_i$ by replacing one curve in $\alpha$ with a distinct curve in $\beta$. Write the disjoint curves in $\alpha$ as $\alpha_1,\ldots,\alpha_g$ and those in $\beta$ as $\beta_1,\ldots,\beta_g$, where the edge from $v_i$ to $v_{i+1}$ corresponds to replacing $\alpha_i$ with $\beta_i$ ($i\le g-k_1$). Note $\alpha_j=\beta_j$ for $g-k_1<j\le g$. Write the curves in $\gamma$ as $\gamma_1,\ldots,\gamma_g$.


Suppose $k_1>0$, so $\alpha_g$ and $\beta_g$ are parallel. 
There cannot be a $\gamma_j$ curve parallel to $\alpha_g$ and $\beta_g$, or else this would yield a connected-sum factor of $S^1\times S^3$ in $X^4$, violating $H_1(X^4;\Q)=0.$ Therefore, some edge in $\delta$ between $v_{\beta}$ and $v_{\gamma}$ corresponds to replacing $\beta_g$ with some $\gamma_j$. Choose the labelings of the $\gamma$ curves so this curve is $\gamma_g$. See Figure~\ref{fig:thmb4}. 
\begin{figure}
\labellist
	\pinlabel {Slides} at 257 95
	\pinlabel {$\D$} at 10 120

	\pinlabel {Destabilize} at 545 95
	
	\pinlabel {$\D'$} at 600 120
	
	\pinlabel {$\alpha_g$} at 60 40
	\pinlabel {$\beta_g$} at 148 40
	\pinlabel {$\gamma_g$} at 50 80
	\pinlabel {$\gamma_r$} at -10 20
	\pinlabel {$\beta_j$} at 170 100
	\pinlabel {$\alpha_i$} at 140 137
	\pinlabel {$\alpha'_i$} at 670 50
	\pinlabel {$\beta'_j$} at 760 50
	\pinlabel {$\gamma'_r$} at 620 40
\endlabellist
\includegraphics[width=120mm]{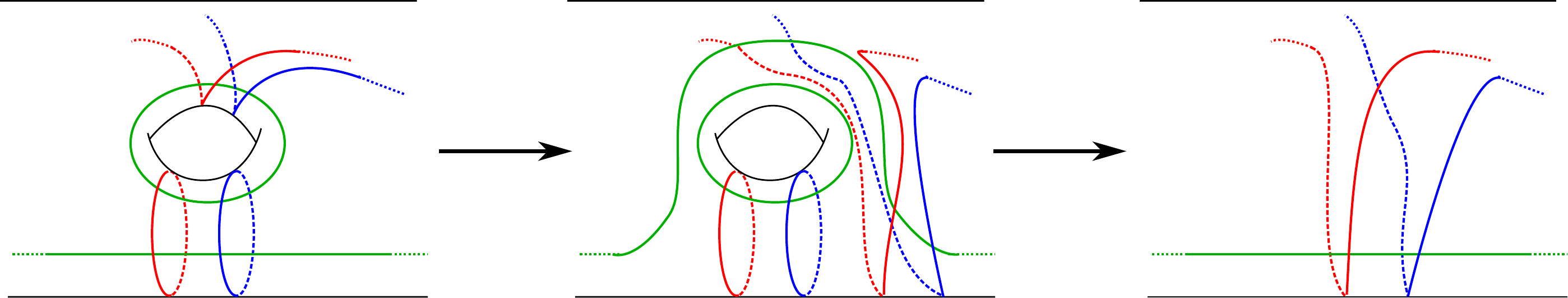}

\caption{Destabilizing a relative trisection diagram as in the proof of Theorem~\ref{thm:b4}.}\label{fig:thmb4}
\end{figure}


Now after slides to remove intersections of other $\alpha, \beta$ curves with $\gamma_g$ and intersections of other $\gamma$ curves with $\alpha_g$ and $\beta_1$, $\D$ can be destabilized along $\nu(\alpha_g\cup\gamma_g)$ to obtain a $(g-1,k',0,1)$-relative trisection diagram $\D'=(\Sigma',\alpha',\beta',\gamma')$ (schematically pictured in Figure~\ref{fig:thmb4}).

\begin{claim}
Let $E$ be an edge in $\delta$ between $v_{\alpha},v_{\gamma}$ with endpoints $w_1,w_2$. Assume $E$ does not correspond to replacing $\beta_1$ with $\gamma_1$. Let $w'_1$ and $w'_2$ be the corresponding vertices in $HT_0(\Sigma')$ (i.e. corresponding to the cut systems by replacing $\alpha,\beta,\gamma$ curves by $\alpha',\beta',\gamma'$ and deleting $\alpha_1,\beta_1,\gamma_1$). Then there is a type 1 edge between $w'_1$ and $w'_2$.
\end{claim}
\begin{proof}
For $i>1$, we write $\alpha_i'$ to denote the $\alpha$ curve in $\D'$ corresponding to $\alpha_i$ in $\D$ (and do similar for $\beta$ and $\gamma$ curves).

If $\alpha_i$ intersects $\gamma_1$ and $\gamma_j$ intersects $\alpha_1$, then $|\alpha'_i\cap\gamma'_j|>|\alpha_i\cap\gamma_j|$. Similarly, if $\beta_l$ intersects $\gamma_1$ then $|\beta'_l\cap\gamma'_j|>|\beta_l\cap\gamma_j|$. However, we always have $|\alpha'_p\cap\beta'_q|=|\alpha_p\cap\beta_q|$ for any $p,q>1.$


{\bf{Case 1.}}
Suppose $E$ corresponds to replacing $\alpha_i$ with $\beta_i$, $1<i\le g-k_1$. The vertices $w_1$ and $w_2$ each correspond to some combination of $\alpha$ and $\beta$ curves in $\D$. Then $w_1$ and $w_2$ descend to vertices $w'_1, w'_2$ of $HT_0(\Sigma')$ which correspond to  combinations of $\alpha'$ and $\beta'$ curves in $\D'$. The cut system for $w'_1$ is obtained from that of $w'_2$ by deleting $\alpha'_i$ and replacing it with $\beta'_i$. Since $|\alpha'_i\cap\beta'_i|=|\alpha_i\cap\beta_i|=1$, $w'_1$ and $w'_2$ are connected by a type 1 edge. 

{\bf{Case 2.}}
Suppose $E$ corresponds to replacing $\beta_i$ with $\gamma_j$ for some $i,j>1$.

To make notation easier in this section, we write $E_{\alpha_r}$ or $E_{\beta_r}$ to indicate the edge corresponding to replacing $\alpha_r$ or $\beta_r$ with a different curve. Then $E=E_{\beta_i}$.

\begin{claim}
$|\beta'_i\cap\gamma'_j|=|\beta_i\cap\gamma_j|=1$.
\end{claim}
\begin{proof}
Assume otherwise. Then $\beta_i\cap\gamma_1\neq\emptyset$ and $\gamma_j\cap\beta_1\neq\emptyset$.

Since $\beta_i$ intersects $\gamma_1$, in $\delta$ $E$ must occur before $E_{\beta_1}$ (since $\beta_1$ is replaced by $\gamma_1$, which would intersect $\beta_i$ if $\beta_i$ had not yet been replaced). 
Similarly, since $\beta_1$ intersects $\gamma_j$, in $\delta$ $E_{\beta_1}$ must occur before $E_{\beta_i}=E$ (since $\beta_i$ is replaced by $\gamma_j$, which would intersect $\beta_1$ if $\beta_1$ had not yet been replaced). 
This yields a contradiction.
\end{proof}
Since the cut systems corresponding to $w'_1$ and $w'_2$ differ by replacing $\beta'_i$ with $\gamma'_j$, we conclude there is a type 1 edge between $w'_1$ and $w'_2$.%


\end{proof}
Thus, there is still a path in $HT_0(\Sigma')$ from $v'_\alpha$ to $v'_\beta$ to $v'_\gamma$ consisting of $|\delta|-1=2(g+p+b-1)-(k_1+k_2)-1=2((g-1)+p+b-1)-((k_1-1)+k_2)-1$ type 1 edges. Recall that $(\Sigma',\alpha',\beta'\gamma')$ is a $((g-1),(k_1-1,k_2,k_3);p,b)$-relative trisection (and $p=0,b=1$). Thus we may proceed inductively until finding a $(\overline{g},(0,\overline{k}_2,\overline{k}_3);0,1)$-relative trisection diagram $\overline{D}=(\overline{\Sigma},\overline{\alpha},\overline{\beta},\overline{\gamma})$ for $X$ so there is a path $\overline{\delta}$ in $HT_0(\overline{\Sigma})$ from $v_{\overline{\alpha}}$ to $v_{\overline{\gamma}}$ to $v_{\overline{\gamma}}$ consisting of $2\overline{g}-k_2$ type 1 edges. The curves $\overline{\alpha}$ and $\overline{\beta}$ are algebraically dual.

By repeating the argument taking the reverse of $\delta$ to take the role of $\delta$ (exchanging the roles of $\gamma$ and $\alpha$), we may also take $\overline{k_2}=0$, so the $\overline{\beta}$ and $\overline{\gamma}$ curves are dual.

This relative trisection description yields a handle decomposition of $X^4$ into a $0$--handle, zero $1$--handles, $\overline{g}$ $2$--handles, and $\overline{k}_3$ $3$--handles.(See~\cite{price} for a description of going from relative trisections to handle structures and vice versa). Since $H_3(X^4;\Q)=0,$ $\overline{k}_3=g$. Therefore, the $\overline{\gamma}$ and $\overline{\alpha}$ curves on $\overline{\Sigma}$ define the same genus--$\overline{g}$ handlebody.


Slide only the $\overline{\alpha}$ curves until they agree with the $\overline{\gamma}$ curves (here using the fact that $\overline{\alpha}$ and $\overline{\gamma}$ define the same handlebody, so that we need not slide $\overline{\gamma}$ as well. Although pairs of curves in a relative trisection diagram are standard, we generally expect to have to slide both sets of curves to standardize). Then we may slide the $\beta$ curves and simultaneously slide the $\alpha$ and $\gamma$ curves (so that the $\alpha$ and $\gamma$ curves always coincide) until $\alpha,\beta$ intersect standardly (and $\beta,\gamma$ also intersect standardly). After all of these slides, $\overline{D}$ is a connected sum of the $(0,0;0,1)$-relative trisection diagram for $B^4$( i.e. $(D^2,\emptyset,\emptyset,\emptyset)$) and genus--$1$ trisection diagrams for $S^4$, so we conclude that $X^4\cong B^4.$

\end{proof}

\begin{corollary}
Let $\widehat{X}$ be a rational homology 4-sphere. Then $\L(\widehat{X})=0$ if and only if $\widehat{X}\cong S^4$.
\end{corollary}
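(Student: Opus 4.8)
The plan is to deduce this corollary from Theorem~\ref{thm:b4} (the relative case) together with the puncturing comparison Proposition~\ref{lem:puncture}, and conversely the known computation $\L(S^4)=0$.

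First I would handle the easy direction. If $\widehat X\cong S^4$, then $\L(S^4)=0$: the standard genus--$1$ trisection of $S^4$ has a length--$2$ valid loop in its cut complex (see Figure~\ref{F:S4genus1}), which after the normalization $\L_{X,\T}=l_{X,\T}-3g+k_1+k_2+k_3 = 2-3+1+0+0 = 0$ gives $\L(S^4)\le 0$; since $l_{X,\T}$ is always at least $3g-k_1-k_2-k_3$ for algebraic reasons, $\L(S^4)=0$. (In fact $\L$ is manifestly non-negative by its normalization, so only the upper bound needs the explicit trisection.)

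For the forward direction, suppose $\widehat X$ is a rational homology $4$-sphere with $\L(\widehat X)=0$. Set $X:=\widehat X\setminus\mathring B^4$. By Proposition~\ref{lem:puncture}, $\rL(X)\le\L(\widehat X)=0$, and since $\rL$ is non-negative (again by its normalization, as every valid path has at least $3(g+p+b-1)-(k_1+k_2+k_3)$ type~1 edges), $\rL(X)=0$. Now $X$ is a rational homology ball: $H_*(X;\Q)$ agrees with that of $B^4$ because removing an open $4$-ball from a rational homology $4$-sphere leaves a simply-connected-in-homology contractible-over-$\Q$ manifold (use the long exact sequence of the pair $(\widehat X, B^4)$, or Mayer--Vietoris). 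Hence Theorem~\ref{thm:b4} applies and gives $X\cong B^4$, so $\widehat X\cong X\cup_{S^3}B^4\cong B^4\cup_{S^3}B^4\cong S^4$.

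I do not expect a genuine obstacle here — the content is entirely in Theorem~\ref{thm:b4} and Proposition~\ref{lem:puncture}, which are already established. The only points requiring a sentence of care are: (i) confirming that puncturing a rational homology sphere yields a rational homology ball (a routine homology computation), and (ii) noting $\L(S^4)=0$, which follows from the cited work of Kirby--Thompson~\cite{linvariant} or directly from Figure~\ref{F:S4genus1}. So the write-up is short: one line for each direction.

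\begin{proof}
If $\widehat{X}\cong S^4$ then $\L(\widehat{X})=0$ by~\cite{linvariant} (this also follows from the genus--$1$ trisection of $S^4$ in Figure~\ref{F:S4genus1}, whose valid loop has length $2$, giving $\L_{S^4,\T}=2-3+1=0$).

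Conversely, suppose $\L(\widehat{X})=0$. Let $X=\widehat{X}\setminus\mathring{B}^4$. The long exact sequence of the pair shows that $X$ is a rational homology ball. By Proposition~\ref{lem:puncture}, $0\le\rL(X)\le\L(\widehat{X})=0$, so $\rL(X)=0$. Theorem~\ref{thm:b4} then gives $X\cong B^4$, hence $\widehat{X}\cong X\cup_{S^3}B^4\cong S^4$.
\end{proof}
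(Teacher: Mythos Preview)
Your proof is correct and follows the same approach as the paper: puncture $\widehat{X}$, apply Proposition~\ref{lem:puncture} to get $\rL(X)=0$, then invoke Theorem~\ref{thm:b4}. You supply more detail than the paper does (the easy direction and the verification that $X$ is a rational homology ball), but the argument is identical in substance.
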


\begin{proof}
Let $X:=\widehat{X}\setminus\mathring{B}^4$. By Proposition~\ref{lem:puncture}, $\rL(X)=0$. By Theorem~\ref{thm:b4}, $X\cong B^4$. Therefore, $\widehat{X}\cong S^4$. 
\end{proof}



\section{Bounds and the arc complex}\label{sec4:arccomplex}

As we are only concerned with surfaces with boundary (rather than surfaces with punctures), we will restrict our discussion to this setting. Given a surface automorphism $\phi:P\to P$, let $\M_{\phi}$ denote the 3-manifold with open book $\O_{\phi}$ induced by $\phi$.

\begin{definition}
The \emph{essential arc complex} $\mathcal{A}_e(P)$ of a surface with boundary $P$ is a simplicial complex such that
	\begin{itemize}
		\item[i)] each vertex in the $0$--skeleton $\mathcal{A}_e^0(P)$ corresponds to an essential, properly embedded arc,
		\item[ii)] the collection of vertices $\{v_0, \ldots, v_n\}$ defines an $n$--cell if the arcs $a_i$ and $a_j$ corresponding to $v_i$ and $v_j$ are disjoint for every $i$ and $j$.
	\end{itemize}
\end{definition}

Note that edges in the essential arc complex behave differently than edges in the cut complex. Given two disjoint arcs $a_1,a_2$ in $\Sigma$, there is an edge in $\mathcal{A}_e^0(P)$ connecting vertices corresponding to those edges. However, if $C_1,C_2$ are cut systems for $\Sigma$ (each consisting of $g$ closed curves and $2p+b-1$ arcs), then if there is an edge in the cut complex between vertices corresponding to $C_1$ and $C_2$, it must be the case that $\boundary C_1=\boundary C_2$. In some sense, the edges in the essential arc complex are more flexible, as we connect vertices corresponding to arcs with very different boundaries (perhaps even meeting distinct components of $\boundary \Sigma$).

Let $d_e: \mathcal{A}_e^0(P) \times \mathcal{A}_e^0(P) \rightarrow \mathbb{N}\cup\{0\}$ count the minimal number of edges between any two essential, properly embedded arcs in $P$. The \emph{displacement distance} of an orientation preserving diffeomorphism $\phi:P\rightarrow P$ which fixes the (non-empty) boundary pointwise is
	$$d_e(\phi)=\min\{d_e(a, \phi(a))|a\in\mathcal{A}_e^0\}.$$
\begin{theorem}[Etnyre, Li]\label{T:displ}
If $d_e(\phi)=0,$ then $\M_{\phi}$ decomposes as $Y^3\#S^1\times S^2$. If $d_e(\phi)=1$, then $\O_{\phi}$ admits a positive or negative Hopf destabilization.
\end{theorem}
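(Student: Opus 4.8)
The plan is to convert both statements into assertions about the monodromy $\phi$ and then recognise $\O_\phi$ as a plumbing onto a simpler open book, so that the topology of $\M_\phi$ can be read off. Throughout, "essential" should be taken to mean "not boundary--parallel," and arcs are considered up to isotopy fixing their endpoints; these conventions are what keep both arguments from degenerating, and making sure the chosen arc really has this property is part of the bookkeeping.

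First suppose $d_e(\phi)=0$ and pick an essential properly embedded arc $a$ realising the displacement distance, so $\phi(a)$ is isotopic to $a$. Since $\phi$ fixes $\boundary P$ pointwise, isotope $\phi$ rel $\boundary P$ so that $\phi(a)=a$, and then (a short technical step: a homeomorphism of a disc fixing a boundary arc is isotopic rel that arc to the identity) isotope $\phi$ further so that it restricts to the identity on a band neighbourhood $N=\nu(a)$. This exhibits $\phi$ as supported in $P\setminus\mathring N$, i.e. it exhibits $\O_\phi$ as the result of plumbing the standard open book $(\text{annulus},\id)$ of $S^1\times S^2$ onto the open book with page $P\setminus \mathring N$ and monodromy $\phi|_{P\setminus\mathring N}$, along the square $N$. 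Equivalently and more directly: the trace of $a$ under the monodromy flow is a properly embedded annulus $V\subset \M_\phi$ whose boundary consists of meridians of the binding components containing $\boundary a$; capping $V$ with the corresponding meridian discs yields a $2$--sphere $S\subset\M_\phi$. Because $a$ is essential, $S$ does not bound a ball, and a small longitudinal push--off of a binding component meeting $\boundary a$ meets $S$ transversely once (so $S$ is non-separating; this is the spot where one must treat with a little care the case that both endpoints of $a$ lie on the same binding component, possibly after a band--slide of $a$). A non-separating $2$--sphere in an orientable $3$--manifold splits off an $S^1\times S^2$ summand, so $\M_\phi\cong Y\#(S^1\times S^2)$.

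Now suppose $d_e(\phi)=1$ and pick an essential arc $a$ with $\phi(a)$ disjoint from $a$ but not isotopic to it; applying $\phi^{-1}$ shows $\phi^{-1}(a)$ is also disjoint from $a$. The portion of $P$ swept out between band neighbourhoods of $a$, $\phi^{-1}(a)$, and $\phi(a)$ assembles, together with $a$, into a $1$--handle $h\subset P$ whose co-core is $a$, over which $\phi$ differs from a homeomorphism supported in $P\setminus \mathring h$ by exactly one Dehn twist along a curve $c$ that runs over $h$ once. Thus $\O_\phi$ is a Hopf stabilisation of the open book with page $P\setminus\mathring h$ and monodromy the restriction of $\phi$, and the sign of the stabilisation is the sign of that Dehn twist. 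That sign is detected by the local configuration of $\phi(a)$ relative to $a$ near the two points of $\boundary a$, equivalently by the page framing of $c$: one handedness gives a positive Hopf destabilisation and the other a negative one. Hence $\O_\phi$ admits a positive or negative Hopf destabilisation.

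The main obstacle is the careful normalisation of $\phi$ in each case and, in the $d_e(\phi)=1$ case, pinning down both the single Dehn twist and its sign from the disjointness data for $a,\phi(a),\phi^{-1}(a)$; one also has to verify that essentiality of $a$ genuinely rules out the degenerate ($\boundary$--parallel) situations where the capped--off trace sphere bounds a ball or the handle $h$ is trivial. These are exactly the points addressed in the argument of Etnyre and Li.
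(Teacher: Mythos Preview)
The paper does not give its own proof of this statement: Theorem~\ref{T:displ} is quoted as a result of Etnyre and Li and is used as a black box, so there is no argument in the paper to compare against.

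That said, a few comments on your sketch. Your treatment of the case $d_e(\phi)=0$ is the standard one and is essentially fine: a $\phi$--invariant essential arc traces out a properly embedded annulus in the mapping torus whose boundary caps off to a nonseparating sphere, yielding the $S^1\times S^2$ summand.

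Your argument for $d_e(\phi)=1$, however, has a real gap. From the hypothesis you only know that $a$ and $\phi(a)$ can be made disjoint (and are not isotopic). You assert that there is a $1$--handle $h$ with cocore $a$ on which $\phi$ differs from a map supported in $P\setminus\mathring h$ by \emph{exactly one} Dehn twist along a curve $c$ crossing $a$ once. This is precisely the Hopf--destabilisation criterion, but it does not follow just from disjointness of $a$ and $\phi(a)$: disjointness gives no control over how $\phi$ behaves on the rest of a neighbourhood of $a$, and in particular does not force $\phi|_{\nu(a)}$ to be a single twist rather than something more complicated whose effect on $a$ happens to land in the complement of $a$. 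The sentence about ``the portion of $P$ swept out between band neighbourhoods of $a$, $\phi^{-1}(a)$, and $\phi(a)$'' does not produce such a handle or such a factorisation without substantial further work; this is exactly the content of the Etnyre--Li argument, and your sketch does not supply it. You should either cite their proof for this step or give a genuine argument isolating the single Dehn twist.
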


Like the closed $\L$-invariant, the relative $\L$-invariant is typically difficult to compute in practice. Nevertheless, we may still bound the invariant in terms of other, more calculable, invariants. Our goal will be to obtain a bound on our invariant in terms of the complexity of the monodromy of the boundary. Here, following Etnyre and Li~\cite{EtnyreLi}, our notion of complexity will be the translation distance of the monodromy in the arc complex. We begin with a bound on distances in this complex in terms of intersection numbers.

\begin{proposition}\label{prop:homessential}
Let $a$ and $b$ be homologically essential arcs on $\Sigma$.  Then $d_e(a,b) \leq |a \cap b| +1$.
\end{proposition}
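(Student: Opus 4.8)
The plan is to induct on the number of intersection points $n = |a \cap b|$, with $a$ and $b$ taken transverse; this mirrors the classical bound $d \le i + 1$ for the curve complex of a surface. The base case $n = 0$ is immediate: two disjoint homologically essential (hence essential) arcs are either isotopic --- so at distance $0$ --- or they span an edge of $\mathcal{A}_e(P)$ --- so at distance $1$; either way $d_e(a,b) \le 1 = |a \cap b| + 1$.

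For the inductive step, suppose $n \ge 1$. I would orient $b$, let $x$ be the first point of $a \cap b$ met along $b$ from an endpoint $b_-$, and let $b_0 \subset b$ be the sub-arc from $b_-$ to $x$, so that the interior of $b_0$ misses $a$. The point $x$ splits $a$ into sub-arcs $a_-$ and $a_+$, each meeting $b_0$ only at $x$, so resolving $b_0 \cup a_\pm$ at $x$ and pushing slightly off $a$ yields two properly embedded arcs $c_-$ and $c_+$ disjoint from $a$. Because the four local rays at $x$ alternate between $a$ and $b$, the ray of $b_0$ is cyclically adjacent to both the ray of $a_-$ and that of $a_+$, so each resolution can be performed without introducing a new intersection with $b$ near $x$; hence $|c_\pm \cap b|$ equals the number of points of $(a_\pm \cap b) \setminus \{x\}$. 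These two counts sum to $n - 1$, so each is at most $n - 1$.

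The step I expect to be the real content is checking that at least one of $c_-, c_+$ is homologically essential, so that the inductive hypothesis can be applied to it. I would argue this by homology bookkeeping over $\mathbb{Z}/2$: each $c_\pm$ is a relative cycle in $H_1(P, \partial P; \mathbb{Z}/2)$ (its boundary is $[x] + [x] = 0$), and as chains $c_- + c_+ = a_- + b_0 + a_+ + b_0 = a_- + a_+ = a$, so $[c_-] + [c_+] = [a] \ne 0$. Thus $[c_\pm] \ne 0$ for at least one sign; for a single embedded arc, a nonzero class in $H_1(P, \partial P; \mathbb{Z}/2)$ is equivalent to being non-separating, which forces the arc to be essential and hence a vertex of $\mathcal{A}_e(P)$. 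Fixing such a $c \in \{c_-, c_+\}$, we have $d_e(a, c) \le 1$, $c$ homologically essential, and $|c \cap b| \le n - 1$, so the inductive hypothesis gives $d_e(c, b) \le (n-1) + 1 = n$, and the triangle inequality gives $d_e(a,b) \le d_e(a,c) + d_e(c,b) \le 1 + n = |a \cap b| + 1$.

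Two routine points will need a line of care in the full proof: arranging the resolution and the push-off simultaneously, so that $c_\pm$ is genuinely disjoint from $a$ yet still does not cross $b$ near $x$; and the equivalence, for a single embedded arc, of ``nonzero in $H_1(P, \partial P; \mathbb{Z}/2)$'', ``non-separating'', and ``essential'' (one direction uses a closed curve meeting the arc once transversely; the other a Jordan-type separation argument). Neither is a genuine obstacle.
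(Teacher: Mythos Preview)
Your proof is correct and is essentially the same argument as the paper's: induct on $|a\cap b|$, take the initial subarc of $b$ up to its first intersection with $a$, form the two surgered arcs (your $c_\pm$, the paper's $b_1',b_1''$), observe that their homology classes sum to $[a]$ so at least one is homologically essential, and apply the inductive hypothesis together with the triangle inequality. Your write-up is in fact slightly more careful about the intersection count (you note $|c_-\cap b|+|c_+\cap b|=n-1$, whereas the paper asserts the stronger $|b_1'\cap b|=n-1$, which is only an inequality in general but is all that is needed).
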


\begin{proof}
\label{prop:intersectionBound}
We proceed by induction. If $a$ and $b$ are disjoint, then the statement clearly holds. Suppose that $|a \cap b| = n$, and isotope $a$ and $b$ until they intersect minimally. Let $b_1$ be the subarc of $b$ whose first endpoint lies on $\boundary \Sigma$ and whose second endpoint is the first intersection with $a$. Let $b_1'$  and $b_1''$ be the two arcs formed by following $b_1$ to the intersection point and then following $a$ to its endpoints in either direction (See Figure~\ref{fig:buildingPath}). Note that $\boundary \nu(b_1 \cup a) = a \cup b_1'\cup b_1''$ so that $b_1'\cup b_1''$ is homologous to $a$. Therefore at least one of $b_1'$ or $b_1''$ is homologically essential. Without loss of generality, suppose $b_1'$ is homologically essential.

Now  $b_1'$ is disjoint from $a$. Also, $|b_1'\cap b|=|a \cap b|-1$. Then by the inductive hypothesis, $d_e(b_1', b) \leq (n-1)+1 = n$. Then $d_e(a,b) \leq d_e(a,b_1') + d_e(b_1', b) \leq  n+1$.
\end{proof}

\begin{figure}
    \centering
    \labellist
        \pinlabel {$a$} at 125 50
        \pinlabel {$b_i$} at 185 17
        \pinlabel {$b_i'$} at 256 70
        \pinlabel {$b_i''$} at 258 43
    \endlabellist
    \includegraphics[scale=1.2]{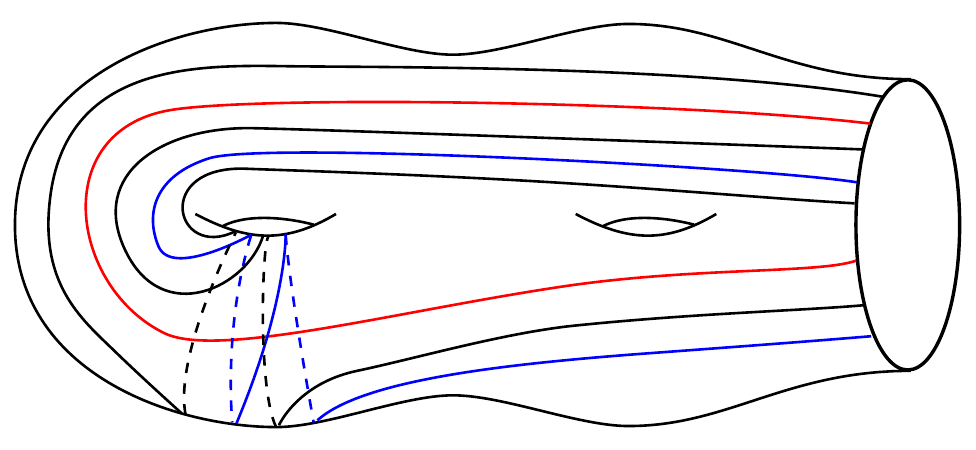}
    \caption{The arcs $a$ and $b$ are disjoint from the arcs $b_1'$ and $b_1''$.}
    \label{fig:buildingPath}
\end{figure}

In an arced relative trisection diagram, the arcs corresponding to the page $\Sigma_{\beta}$ will typically intersect the $\alpha$ curves. This makes it unclear how they fit onto the page $\Sigma_{\alpha}$, where the monodromy of the page is typically defined. To understand all of the arcs on a single page, we look towards subsurface projection.

\begin{definition}
Let $a$ be a properly embedded arc on a surface $\Sigma$, and let $\Sigma'$ be an essential subsurface of $\Sigma$. Isotope $a$ so that $|a \cap \boundary \Sigma'|$ is minimized. Then if  $a \cap \Sigma' \neq \emptyset$, we say that $a$ \textbf{cuts} $\Sigma'$. If $a$ cuts $\Sigma'$, then we define the \textbf{subsurface projection} of $a$ onto $\Sigma'$ to be:
\begin{enumerate}
    \item $a$, if $a \subset \Sigma'$.
    \item Any component of the boundary of a regular neighbourhood of $a \cap \partial \Sigma'$, if  $a \cap \partial \Sigma' \neq \emptyset$.
\end{enumerate}
\end{definition}

Disjoint arcs on a surface will not necessarily project to disjoint arcs on a subsurface. The issue arises when both arcs are projected onto the same boundary component. In this case one can quickly verify the following lemma.

\begin{lemma}
\label{lem:projectionIntersection}
If $a$ and $b$ are disjoint arcs on $\Sigma$ and $a'$ and $b'$ are their projections onto $\Sigma'$, then $|a' \cap b'| \leq 2$.
\end{lemma}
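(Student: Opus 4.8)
The plan is to analyze what happens when the disjoint arcs $a,b$ both have essential intersection with $\boundary \Sigma'$, which is the only case where the projections $a',b'$ can fail to be disjoint. First I would dispatch the easy cases: if either arc lies entirely inside $\Sigma'$, then its projection is the arc itself, and since $a,b$ are disjoint in $\Sigma$ their projections are disjoint (or differ by an isotopy making them disjoint), so $|a'\cap b'|=0$. Similarly, if $a$ cuts $\Sigma'$ but $a\cap\boundary\Sigma'=\emptyset$ and $a\not\subset\Sigma'$, this cannot happen once $|a\cap\boundary\Sigma'|$ is minimized and $a$ is not contained in $\Sigma'$; so the substantive situation is that both $a$ and $b$ meet $\boundary\Sigma'$ transversely and essentially, and we take $a',b'$ to be boundary components of regular neighborhoods of $a\cap\boundary\Sigma'$ and $b\cap\boundary\Sigma'$ respectively.

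The key step is a local picture near the boundary components of $\Sigma'$. After isotoping to minimize intersections with $\boundary\Sigma'$, the arcs $a$ and $b$ meet $\boundary\Sigma'$ in finitely many points, and near each such point $a$ (or $b$) is a small transverse strip crossing from one side of $\boundary\Sigma'$ to the other. The projection $a'$ is built from parallel copies of subarcs of $\boundary\Sigma'$ joined along bands that follow $a$; concretely, $a'\subset\nu(a\cap\boundary\Sigma')$ is an arc (or curve) running parallel to $\boundary\Sigma'$, hugging it closely. The same holds for $b'$. I would argue that $a'$ and $b'$ can be taken to lie in an arbitrarily thin collar neighborhood $\boundary\Sigma'\times[0,1]$, where $\boundary\Sigma'\times\{0\}=\boundary\Sigma'$. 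In this collar, $a'$ is a properly embedded essential arc with both endpoints on $\boundary\Sigma'\times\{1\}$ and hugging $\boundary\Sigma'$; an essential arc in an annulus $\boundary\Sigma'\times[0,1]$ (up to isotopy) is a "spanning" arc or a "boundary-parallel" arc in the interior boundary, and since $a\cap b=\emptyset$, the footprints $a\cap\boundary\Sigma'$ and $b\cap\boundary\Sigma'$ are interleaved or nested on the circle $\boundary\Sigma'$ but never transverse. Counting: two essential arcs in an annulus that are each "close to one boundary circle" intersect in at most $2$ points — the worst case being when one arc nests inside the other and must cross it twice to get back out to $\boundary\Sigma'\times\{1\}$. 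This gives $|a'\cap b'|\le 2$.

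The main obstacle I anticipate is making the collar/annulus reduction rigorous when $a$ and $b$ meet several different components of $\boundary\Sigma'$, or meet the same component many times: one must check that the projections onto distinct boundary components do not interfere (they are separated by the collar), and that on a single component the "at most $2$" bound is genuinely tight and not larger when there are many footprints. I would handle this by observing that the projection onto a single boundary circle $c$ of $\Sigma'$ depends, up to isotopy, only on the cyclic pattern of the footprints of $a$ and $b$ on $c$; since $a$ and $b$ are disjoint, their footprints on $c$ come in disjoint blocks, and the regular-neighborhood boundary arcs $a'$, $b'$ inherit this disjointness except possibly at the two "ends" where the thin bands of $a'$ and $b'$ closest to $\boundary\Sigma'\times\{1\}$ can be forced to cross. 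This is a finite, purely combinatorial check on the annulus, and I expect it to be short once the collar reduction is in place; the lemma then follows. Since the paper labels this an easily verifiable lemma, I would keep the write-up to the collar reduction plus the annulus count, without belaboring the combinatorics.
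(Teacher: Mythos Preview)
The paper does not actually prove this lemma: it states only that ``one can quickly verify'' it and moves on. So there is no argument in the paper to compare your proposal against, beyond the implicit claim that the verification is routine.

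Your outline is exactly the sort of quick check the authors have in mind. The reduction to the case where both $a$ and $b$ meet $\partial\Sigma'$ is correct, and the key observation---that the projections $a',b'$ live in a thin collar of $\partial\Sigma'$ and that the disjointness of $a,b$ forces their footprints on each boundary circle to be unlinked, so the two collar arcs can cross at most twice---is the right mechanism. One small point: the paper's phrase ``regular neighbourhood of $a\cap\partial\Sigma'$'' is almost certainly a slip for the neighbourhood of the union (an arc component of $a\cap\Sigma'$ together with the boundary circles it meets), and your description of $a'$ as ``parallel copies of subarcs of $\partial\Sigma'$ joined along bands that follow $a$'' already reflects that corrected reading. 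With that understood, your collar-and-annulus count is sound; you could tighten the write-up by simply noting that $a'$ and $b'$ can be realized as frontier components of nested regular neighbourhoods $N(a\cup\partial\Sigma')\subset N(a\cup b\cup\partial\Sigma')$, which makes the ``at most two crossings'' bound immediate.
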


By combining Lemma~\ref{lem:projectionIntersection} and Proposition~\ref{prop:intersectionBound}, we obtain the following.

\begin{corollary}
\label{cor:projectedPath}
Let $(a_0,...,a_n)$ be a path in $A_e(\Sigma)$ where every arc cuts an essential subsurface $\Sigma'$. Let $a_0'$ and $a_n'$ be the subsurface projections of $a_0$ and $a_n$, respectively. Then $d_e{A_e(\Sigma'))}(a_0',a_n') \leq 3n$.
\end{corollary}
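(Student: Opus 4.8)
The plan is to chain together the two results that were just established. Specifically, Corollary~\ref{cor:projectedPath} asserts that for a path $(a_0,\dots,a_n)$ in $\mathcal{A}_e(\Sigma)$ all of whose arcs cut an essential subsurface $\Sigma'$, the subsurface projections $a_0',a_n'$ of the endpoints satisfy $d_{e}(a_0',a_n')\le 3n$ in $\mathcal{A}_e(\Sigma')$. The strategy is to project the whole path edge-by-edge and control each jump.

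First I would take a path $(a_0,a_1,\dots,a_n)$ realizing a length-$n$ edge-path between $a_0$ and $a_n$ in $\mathcal{A}_e(\Sigma)$, so that consecutive arcs $a_i,a_{i+1}$ are disjoint, and assume every $a_i$ cuts $\Sigma'$. For each $i$ let $a_i'$ be a subsurface projection of $a_i$ onto $\Sigma'$ (a choice of a boundary component of a regular neighborhood of $a_i\cap\partial\Sigma'$ when $a_i\not\subset\Sigma'$, or $a_i$ itself otherwise). The key local estimate is that consecutive projected arcs $a_i',a_{i+1}'$ are close in $\mathcal{A}_e(\Sigma')$: since $a_i$ and $a_{i+1}$ are disjoint, Lemma~\ref{lem:projectionIntersection} gives $|a_i'\cap a_{i+1}'|\le 2$, and then Proposition~\ref{prop:homessential} (in the form proved, $d_e(a,b)\le |a\cap b|+1$) yields $d_{e}(a_i',a_{i+1}')\le 3$ — provided $a_i'$ and $a_{i+1}'$ are homologically essential in $\Sigma'$. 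Summing over the $n$ edges of the path and applying the triangle inequality for $d_e$ gives $d_{e}(a_0',a_n')\le 3n$.

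The main obstacle is the hypothesis on Proposition~\ref{prop:homessential}: it is stated for \emph{homologically essential} arcs, whereas a subsurface projection of an essential arc need not be homologically essential in $\Sigma'$ (a boundary-parallel arc of $\partial\Sigma'$, for instance). I would address this by observing that the projection can always be taken to be an essential (homologically nontrivial) arc of $\Sigma'$: a neighborhood of $a_i\cap\partial\Sigma'$ has a boundary component which, together with part of $\partial\Sigma'$, cobounds a region carrying the class of $a_i\cap\Sigma'$; since $a_i$ cuts $\Sigma'$ essentially, at least one such boundary component is homologically essential in $\Sigma'$, and we choose $a_i'$ to be that one (exactly as in the selection of $b_1'$ versus $b_1''$ in the proof of Proposition~\ref{prop:homessential}). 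With this choice consistently made, both hypotheses of Proposition~\ref{prop:homessential} are met at each step.

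Finally I would assemble the bookkeeping: write $d_{e}(a_0',a_n')\le\sum_{i=0}^{n-1}d_{e}(a_i',a_{i+1}')\le\sum_{i=0}^{n-1}3=3n$, which is the claimed inequality $d_{\mathcal{A}_e(\Sigma')}(a_0',a_n')\le 3n$. I expect this step to be routine; the only subtlety worth a sentence in the final write-up is the boundary-case where some $a_i$ lies entirely in $\Sigma'$ (then $a_i'=a_i$ and $|a_i'\cap a_{i+1}'|\le 1$, so the bound is only better), and the observation that the constant $3$ per edge is what produces the factor $3$ in the statement.
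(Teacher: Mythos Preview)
Your approach is correct and is essentially the same as the paper's: the authors simply state that the corollary follows ``by combining Lemma~\ref{lem:projectionIntersection} and Proposition~\ref{prop:intersectionBound},'' which is exactly the edge-by-edge projection argument you spell out. You are more careful than the paper in flagging the essentiality hypothesis on the projected arcs and in indicating how to choose an essential projection; the paper leaves this implicit.
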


We are now ready to prove the main proposition of this section.

\begin{proposition}
Let $\T$ be a $(g,k;p,b)-$relative trisection $X$ such that $g \geq 2$ or $g=1$ and $b \geq 2$. Let $\phi:\Sigma_\alpha\to\Sigma_\alpha$ be the monodromy of the open book decomposition of $\partial X$ induced by $\T$. Then $\rL^{\boundary}(\T) \geq \frac{1}{3}(2p+b-1)d_e(\phi)$.
\end{proposition}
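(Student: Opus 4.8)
The plan is to bound $d_e(\phi)$ from above by a multiple of the number of type $0^\boundary$ edges in any valid path $\delta$ realizing $\rL^\boundary(\T)$, which is exactly the reverse inequality. Let $\D_A=(\Sigma;\alpha,\beta,\gamma;A_\alpha,A_\beta,A_\gamma)$ be an arced relative trisection diagram and $\delta$ a valid path with $\rL^\boundary(\T)=\rL^\boundary(\D_A)=\#(\text{type }0^\boundary\text{ edges in }\delta)$. Since $A_\alpha$ contains $2p+b-1$ arcs, by pigeonhole there is some arc $a\in A_\alpha$ that is involved in at most $\frac{1}{2p+b-1}\rL^\boundary(\T)$ of the type $0^\boundary$ edges of $\delta$. (Here I count an edge as ``involving $a$'' if it replaces the arc that $a$ has become, tracking $a$ through the path as in the proof of Theorem~\ref{thm:boundarysmall}.) First I would follow $a$ along $\delta$: at every type $0$ or type $1$ edge the arc $a$ (or its current image) is unchanged, and at each type $0^\boundary$ edge it is replaced by a disjoint arc. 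So the images of $a$ through the whole path $\delta$ give a sequence of arcs $a=a^{(0)},a^{(1)},\ldots,a^{(m)}=\phi(a)$ in the arc complex of $\Sigma$, where $m\le \frac{1}{2p+b-1}\rL^\boundary(\T)$ and consecutive arcs are disjoint; hence this is a path of length $m$ in $\mathcal{A}_e(\Sigma)$ (the arcs remain essential since they are part of cut systems).

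The subtlety is that $d_e(\phi)$ is defined using the arc complex of the \emph{page} $\Sigma_\alpha$, not of $\Sigma$, and the intermediate arcs $a^{(i)}$ generally sit on $\Sigma$ meeting the $\alpha$ curves nontrivially; only $a^{(0)}=a$ and $a^{(m)}=\phi(a)$ can be taken to lie on $\Sigma_\alpha$. This is exactly the situation Corollary~\ref{cor:projectedPath} is designed for: I would take $\Sigma'=\Sigma_\alpha$ viewed as an essential subsurface of $\Sigma$ (obtained by deleting neighborhoods of the compressing disks for the $\alpha$ curves), check that every $a^{(i)}$ cuts $\Sigma_\alpha$ (each $a^{(i)}$ together with $\alpha$ and the other arcs of its cut system forms a cut system for $\Sigma$, so its projection to $\Sigma_\alpha$ is essential and nonempty), and apply the corollary to the path $(a^{(0)},\ldots,a^{(m)})$. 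This yields a path in $\mathcal{A}_e(\Sigma_\alpha)$ from the projection of $a^{(0)}$ to the projection of $a^{(m)}$ of length at most $3m$. Finally I would note that since $a^{(0)}=a$ and $a^{(m)}=\phi(a)$ already lie in $\Sigma_\alpha$, their subsurface projections are themselves (up to isotopy), so $d_e(a,\phi(a))\le 3m$, whence
\[
d_e(\phi)\le d_e(a,\phi(a))\le 3m\le \frac{3}{2p+b-1}\rL^\boundary(\T),
\]
which rearranges to the claimed bound $\rL^\boundary(\T)\ge \tfrac{1}{3}(2p+b-1)d_e(\phi)$.

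The hypothesis $g\ge 2$, or $g=1$ and $b\ge 2$, should be exactly what is needed to guarantee that $\Sigma_\alpha$ has enough topology that there exist essential properly embedded arcs and that the page is not a disk or annulus where ``essential arc'' or displacement distance degenerates; I would use it to ensure $\mathcal{A}_e(\Sigma_\alpha)$ is nonempty and connected so that $d_e$ and Corollary~\ref{cor:projectedPath} apply. The main obstacle I anticipate is the bookkeeping in the subsurface projection step: carefully justifying that each intermediate arc $a^{(i)}$ cuts $\Sigma_\alpha$ (this uses that $a^{(i)}$ is part of a cut system of $\Sigma$ disjoint from the current $\alpha$-type curves at that stage of $\delta$, which themselves are slide-equivalent to $\alpha$), and confirming that the factor of $3$ from Lemma~\ref{lem:projectionIntersection} is not lost when the endpoints already lie on the subsurface. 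A secondary point to verify is that tracking a single arc through type $0$ and type $1$ edges genuinely leaves it fixed — type $1$ edges only swap a closed curve for one meeting it once, and type $0$ edges only handleslide a closed curve, so neither touches the arcs of the cut system, making this immediate.
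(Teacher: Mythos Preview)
Your approach is essentially the paper's: track a single arc of $A_\alpha$ through the type $0^\boundary$ edges of a minimizing valid path, obtain a path of length $m$ in $\mathcal{A}_e(\Sigma)$, project to the page via Corollary~\ref{cor:projectedPath} to get $d_e(\phi)\le 3m$, and multiply by $2p+b-1$. Your pigeonhole formulation is logically equivalent to the paper's ``every arc changes at least $\tfrac{1}{3}d_e(\phi)$ times.''

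Two small inaccuracies worth flagging. First, the subsurface you project to is not $\Sigma_\alpha$ but $\Sigma':=\Sigma\setminus\nu(\alpha)$; your parenthetical description is correct, but $\Sigma_\alpha$ is the surgered surface, obtained from $\Sigma'$ by capping the new boundaries with disks. The paper projects to $\Sigma'$ and then observes that capping only decreases arc-complex distance. This also simplifies your ``cuts $\Sigma'$'' check: every $a^{(i)}$ is a properly embedded arc in $\Sigma$, so its endpoints lie on $\partial\Sigma\subset\Sigma'$ and hence $a^{(i)}\cap\Sigma'\neq\emptyset$; you do not need to invoke the cut-system structure (and indeed the closed curves at stage $i$ need not be slide-equivalent to $\alpha$). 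Second, the hypothesis on $(g,b)$ is not used to guarantee $\mathcal{A}_e(\Sigma_\alpha)$ is nonempty or connected; it rules out $\Sigma'$ being a pair of pants, which is needed for Corollary~\ref{cor:projectedPath} (via Proposition~\ref{prop:homessential}) to apply to the projected arcs.
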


\begin{proof}
Let $\D_A=(\Sigma;\alpha,\beta,\gamma;A_\alpha,A_\beta,A_\gamma)$ be a relative trisection diagram for $\T$ and $\delta$ a path in $HT_p(\Sigma)$ valid with respect to $\D_A$. We will show that each arc in $A_{\alpha}$ must be replaced at least $\frac{1}{3}d_e(\phi)$ times during the course of $\delta$ (here we are a little relaxed in terminology. Implicitly, we identify the arc systems corresponding to either side of a type 0$^{\boundary}$ edge in $\delta$, so that we may think of one arc being replaced several times when following $\delta$ from start to end). Using the fact that there are $(2p+b-1)$ arcs, we will then obtain the result as stated.

Let $a_0$ be an arc in $A_\alpha$. Find the first type 0$^{\boundary}$ edge in $\delta$ corresponding to replacing $a_0$ with an arc $a_1$, if such an edge exists. Say the next type 0$^{\boundary}$ edge corresponding to replacing $a_1$ changes $a_1$ to $a_2$. Repeat until finding an edge $a_n$ which is never changed in cut systems corresponding to type 0$^{\boundary}$ edges once it appears. Since $\delta$ is valid with respect to $\D_A$, $a_n=\phi(a_0)$.

By definition of a type 0$^{\boundary}$ edge, $a_i$ and $a_{i+1}$ must be disjoint in their interiors and can be pushed off each other by isotoping the boundary of $a_{i+1}$ slightly. Therefore, there is a path in $A_e(\Sigma)$ whose vertices correspond (in order) to $a_0,a_1,\ldots, a_n$ (after pushing off slightly). 


Let $\Sigma'$ be the subsurface of $\Sigma$ obtained by deleting a small annular neighborhood of each $\alpha$ curve. 
Since each $a_i$ is a properly embedded arc in $\Sigma$ and $\boundary\Sigma\subset\Sigma'$, we must have $a_i\cap\Sigma'\neq\emptyset$. Let $a'_i$ be a subsurface projection of $a_i$ to $\Sigma'_i$. We have $a'_0=a_0$ and $a'_n=a_n$, since $a_0$ and $a_n$ are both contained in $\Sigma_\alpha$. 

By the hypothesis, $\Sigma'$ is not a pair of pants. 
Then by Corollary~\ref{cor:projectedPath}, $d_{A_e(\Sigma')}(a_0',a_n') \leq 3n$. Now $\Sigma_\alpha$ is obtained from $\Sigma'$ by capping some boundary components with disks, so  $d_e(a_0,a_n)\le 3n$. Since $a_n=\phi(a_0)$, 
we conclude $d_e(\phi) \leq 3n$. That is, $n \geq\frac{1}{3}d_e(\phi)$.

\end{proof}

Note that pseudo-Anosov maps have positive stable translation distance in the arc complex (see, for example,~\cite{FuSh} and~\cite{BaSt})). In particular, this implies that for any $n\in\N$ and any surface $P$ with negative Euler characteristic, there exists a pseudo-Anosov map $\phi:P\to P$ with translation distance greater than $k$. We may use this fact, together with the previous corollary, to construct relative trisections with large relative $\L$-invariant. More precisely, we start with a map pseudo-Anosov map $\phi$ on a genus--$p$ surface whose translation distance is greater than some given $n$. We factor $\phi$ into a collection of $m$ Dehn twists. This decomposition allows us to  construct a Lefschetz fibration with $m$ Lefschetz singularities, such that the monodromy of the fibration is the original map $\phi$. A construction in~\cite{nickthesis} shows how to turn this fibration into a $(p+m,0;p,b)$-relative trisection whose induced open book decomposition on the boundary has monodromy $\phi$ (see also Subsection~\ref{sec:stab}). We summarize this discussion in the following corollary.

\begin{corollary}\label{cor:arbbigtri}
For fixed $p,b$ with $2p+b-1>1$ and for all $n\in\N$, there exists a $(g_n,k_n;p,b)$-relative trisection $\T_n$ so that $\rL^\boundary(\T_n)>n$.
\end{corollary}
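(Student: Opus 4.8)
The plan is to chain together the proposition just proved — which gives $\rL^\boundary(\T)\ge\frac13(2p+b-1)d_e(\phi)$ for a relative trisection $\T$ with monodromy $\phi$ on a surface of genus $\ge 2$ (or genus $1$ with $\ge 2$ boundary components) — with the existence of pseudo-Anosov maps of arbitrarily large translation distance in the arc complex and with the construction from \cite{nickthesis} that promotes a Lefschetz fibration to a relative trisection inducing a prescribed open book on the boundary. So the argument is really an assembly of three ingredients rather than a new estimate.

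First I would fix $p,b$ with $2p+b-1>1$. Let $P$ denote the genus--$p$ surface with $b$ boundary components; since $2p+b-1>1$ and $b\ge 1$, $P$ has negative Euler characteristic (and is not a pair of pants), so the previous proposition applies once we produce a relative trisection with page $P$. Next, given $n\in\N$, choose a pseudo-Anosov homeomorphism $\phi_0:P\to P$ fixing $\partial P$ pointwise; since pseudo-Anosov maps have positive stable translation distance in the arc complex (as cited via \cite{FuSh,BaSt}), some power $\phi:=\phi_0^N$ has $d_e(\phi)>3n$. Then factor $\phi$ as a product of, say, $m$ positive Dehn twists $\tau_{\delta_1}\cdots\tau_{\delta_m}$ along curves $\delta_i\subset P$ (possible since the mapping class group fixing the boundary is generated by Dehn twists; allowing negative twists only enlarges the set of realizable $\phi$, and we may even arrange all twists positive using the lantern/chain relations or simply by noting $\phi$ lies in the image — the precise factorization does not matter). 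Build the allowable Lefschetz fibration $f:W\to D^2$ with regular fiber $P$ and vanishing cycles $\delta_1,\ldots,\delta_m$ in order; its induced open book on $\partial W$ is $(P,\phi)$. Applying the construction of \cite{nickthesis} (wrinkling, as recalled in Section~\ref{sec:stab}) converts $f$ into a $(g_n,k_n;p,b)$-relative trisection $\T_n$ of $W$ — with $g_n=p+m$, $k_n=(0,\ldots)$ — whose induced open book on $\partial W$ is still $(P,\phi)$.

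Finally, since $p\ge 0$ and $2p+b-1>1$ forces either $p\ge 1$ (hence $g_n=p+m\ge 2$ once $m\ge 1$, and $m\ge 1$ since $\phi\neq\id$) or $p=0,b\ge 3$ (again $g_n=m\ge 1$ with $b\ge 3\ge 2$), the hypotheses of the preceding proposition are met, so
\[
\rL^\boundary(\T_n)\;\ge\;\tfrac13(2p+b-1)\,d_e(\phi)\;>\;\tfrac13(2p+b-1)\cdot 3n\;=\;(2p+b-1)\,n\;\ge\;n,
\]
using $2p+b-1\ge 1$. This gives the claimed $\T_n$.

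The main obstacle — really the only non-bookkeeping point — is making sure the hypothesis ``$g\ge 2$, or $g=1$ and $b\ge 2$'' of the preceding proposition is genuinely satisfied by the trisection $\T_n$ produced from the Lefschetz fibration: one must confirm that the wrinkling construction of \cite{nickthesis} really does yield a trisection surface of genus $p+m$ (not something smaller after destabilization) with $b$ boundary components, and that $m$ can be taken $\ge 1$, i.e. that $\phi$ is not isotopic to the identity — which is automatic since $d_e(\phi)>3n\ge 0$ means $\phi$ moves some arc. A secondary subtlety is citing the pseudo-Anosov fact in the exact form needed: positive \emph{stable} translation distance $\tau_\infty(\phi_0)>0$ implies $d_e(\phi_0^N)\ge N\tau_\infty(\phi_0)-C\to\infty$, so a sufficiently large power works; this is standard and I would simply invoke \cite{FuSh,BaSt} for it. Everything else is routine assembly.
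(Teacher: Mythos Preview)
Your proposal is correct and follows essentially the same route as the paper: produce a pseudo-Anosov monodromy on the genus--$p$, $b$-holed page with large displacement distance (via positive stable translation distance and iteration), factor it into Dehn twists to build an (achiral) Lefschetz fibration, wrinkle to a $(p+m,k;p,b)$-relative trisection, and then apply the preceding proposition. You are in fact slightly more careful than the paper's discussion in two places: you account for the factor $\tfrac13$ by choosing $d_e(\phi)>3n$, and you explicitly verify the hypothesis $g\ge 2$ or ($g=1$ and $b\ge2$) for the resulting trisection surface.
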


Compare this statement to Corollary~\ref{cor:arbbig}, in which we produced manifolds $X_1,X_2,\ldots$ with arbitrarily large relative $\L$-invariant. In those examples, the genus or boundary number of a page of an open book on $X_n$ was forced to grow large with $n$ due to the dimension of $H_1(\boundary X_n;\Z)$. In contrast, the induced open books in Corollary~\ref{cor:arbbigtri} live in 3-manifolds $Y_1,Y_2,\ldots,$ where the dimension of H$_1(\boundary Y_n;\Z)$ is bounded above uniformly.

In the examples of Corollary~\ref{cor:arbbigtri}, one must continuously iterate the original pseudo-Anosov map in order to increase the complexity of the relative trisection. This has the side effect of increasing the genus of the decomposition. In light of this, we pose the following natural question.
\begin{question}
Fix a positive integer $g$. Do there exist genus--$g$ relative trisections with arbitrarily large relative $\L$-invariant? 
\end{question}

\bibliographystyle{alpha}

\bibliography{references}

\end{document}